\numberwithin{equation}{section}
\newcommand{\E}{\mathbb{E}}
\renewcommand{\P}{\mathbb{P}}
\newcommand{\N}{\mathbb{N}}
\newcommand{\R}{\mathbb{R}}
\newcommand{\eps}{\epsilon}
\newcommand{\p}{{\sf{p}}}
\newcommand{\q}{{\sf{q}}}
\renewcommand{\S}{\mathcal{S}}
\newcommand{\gga}{g_{\gamma}}
\numberwithin{equation}{section}
\newtheorem{theorem}{Theorem}[section]
\newtheorem{lemma}[theorem]{Lemma}
\newtheorem{corollary}[theorem]{Corollary}
\newtheorem{proposition}[theorem]{Proposition}
\theoremstyle{definition}
\newtheorem{definition}[theorem]{Definition}
\newtheorem{remark}[theorem]{Remark}
\newcommand{\textandreference}[2]{\texorpdfstring{\hyperref[#2]{#1\ref*{#2}}}{#1\ref*{#2}}}
\newcommand{\refsect}[1]{\textandreference{Section~}{#1}}
\newcommand{\refsubsect}[1]{\textandreference{Section~}{#1}}
\newcommand{\refthm}[1]{\textandreference{Theorem~}{#1}}
\newcommand{\refprop}[1]{\textandreference{Proposition~}{#1}}
\newcommand{\reflemma}[1]{\textandreference{Lemma~}{#1}}
\newcommand{\refcoro}[1]{\textandreference{Corollary~}{#1}}
\title{Short paths for first passage percolation\\ on the complete graph}
\author{Maren Eckhoff\thanks{Department of Mathematical Sciences, University of Bath, Bath, BA2 7AY, United Kingdom. Email: {\tt m.eckhoff@bath.ac.uk}}
\and
Jesse Goodman\thanks{Mathematisch Instituut, Universiteit Leiden, P.O.~Box 9512, 2300~RA~Leiden, The~Netherlands. Email: {\tt goodmanja@math.leidenuniv.nl}}
\and Remco van der Hofstad\thanks{Department of Mathematics and Computer Science,
Eindhoven University of Technology, P.O.\ Box  513,
5600 MB Eindhoven, The Netherlands. Email:
{\tt rhofstad@win.tue.nl, f.r.nardi@tue.nl}}\and Francesca R. Nardi$^\ddagger$}
\begin{document}
\maketitle

\begin{abstract}
We study the complete graph equipped with a topology induced by independent and identically distributed edge weights. The focus of our analysis is on the weight $W_n$ and the number of edges $H_n$ of the minimal weight path between two distinct vertices in the weak disorder regime. We establish novel and simple first and second moment methods using path counting to derive first order asymptotics for the considered quantities.
Our results are stated in terms of a sequence of parameters $(s_n)_{n\in \N}$ that quantifies the extreme-value behaviour of the edge weights,
and that describes different universality classes for first passage percolation on the complete graph.
These classes contain both $n$-independent and $n$-dependent edge weight distributions.
 The method is most effective for the universality class containing the edge weights $E^{s_n}$, where $E$ is an exponential$(1)$ random variable and $s_n\log n \to \infty$, $s_n^2 \log n \to 0$. We discuss two types of examples from this class in detail. In addition, the class where $s_n \log n$ stays finite is studied. This article is a contribution to the program initiated in \cite{BhaHof12}.

\end{abstract}

{\bf{Key words:}} first passage percolation, first and second moment method, path counting, complete graph, hopcount, weak disorder, universality.\\

{\bf{Mathematics Subject Classification (2000):}} 90B15, 05C80, 60C05

\section{Introduction and Results}
The immense increase of data collection in recent years triggered an interdisciplinary effort to formulate mathematical models that describe real-world networks. The rigorous analysis of properties of these models and the study of asymptotics as the size of the network becomes large is one central theme of modern probability. Important examples of large networks include transportation, data transmission and gene regulatory networks. These networks are not only described by their graph structure, which contains the information about connections between vertices in the network and the degree of separation, but rather by edge weights representing the cost or time required to traverse the edges. In the case of a data network like the Internet, computers are vertices, cables are edges and the weights model the cost needed to transfer a data packet. To describe a transportation network, one could think of stations as vertices, rails as edges, and the time or economic cost to carry a commodity from one station to the next is captured by the edge weights.

The problem of finding the optimal path in a network can be modeled as follows: Let $\mathcal{G}=({\rm V, E})$ be a finite connected (deterministic or random) graph. To each edge $e \in \text{E}$ assign a random edge weight $X_{e}$, where $(X_e)_{e \in \text{E}}$ are positive, independent and identically distributed random variables. We choose uniformly at random two vertices in ${\rm V}$ and label them $1$ and $2$. The set of all self-avoiding paths between vertex $1$ and $2$ is denoted by $\S_{1,2}$. The cost of a path $\p$ between them is a function $w(\p)$ of the edge weights on the path, and the optimal path $\p_{\text{opt}}$ is the path that minimizes $w$ over $\S_{1,2}$.

Two statistics of the optimal path are of particular importance: The actual cost of traversing the path $w(\p_{\text{opt}})$ and, when the minimizer $\p_{\text{opt}}$ is unique, the number $H(\p_{\text{opt}})$ of edges in the optimal path. The latter quantity is called the {\it{hopcount}}.

In the study of random disordered systems, two cost regimes are of great interest. The conventional setup is the {\bf{weak disorder regime}}, where the weight of a path $\p$ is given by
\begin{equation}\label{definitionWnused}
w(\p)=\sum_{e \in \p} X_e.
\end{equation}
Here every edge adds to the weight. In contrast, the {\bf{strong disorder regime}} is determined only by the maximal edge weight contained in the path, i.e.,
\[
w_{\max}(\p):= \max_{e \in \p} X_e.
\]
The weight function corresponding to the {\bf{graph distance}} is given by
\[
w_{\text{graph}}(\p):= \sum_{e \in \p}1.
\]
The focus of our study is on the case of a complete graph $\mathcal{G}=K_n=([n],\text{E}_n)$ where the vertex set equals $[n]=\{1,\dotsc,n\}$. Let $(F_n)_{n \in \N}$ be a sequence of continuous distribution functions concentrated on the positive half-line. For $n \in \N$, the edge weights $X_{e,n}$, $e\in \text{E}_n$, follow distribution $F_n$. It will always be clear from the context which $n$ we consider and we write $X_e$ instead of $X_{e,n}$ to simplify notation. As $F_n$ is continuous, the smallest weight path is unique almost surely and the hopcount $H_n=H(\p_{\text{opt}})$ is well-defined. The weight of the smallest weight path is denoted by $W_n=w(\p_{\text{opt}})$.

The complete graph with random edge weights is a very suitable model to test new techniques which can then be applied in several different contexts. Some of the first results about first passage percolation on this graph can be found in \cite{JAN99} and \cite{HHM01}. Recent progress on related flow problems on the complete graph was achieved in \cite{AMDS09} and \cite{AB10}.

In \cite{BhaHof12} Bhamidi and van der Hofstad initiated a program to study the connection between graph distance, weak disorder and strong disorder. The idea is to introduce a parameter $s \ge 0$ and consider the new weight function
\[
w_s(\p):= \sum_{e \in \p} X_e^s.
\]
Then the classical weak disorder regime corresponds to $s=1$, i.e.~$w_1=w$, and the graph distance can be obtained by choosing $s=0$. The strong disorder regime corresponds to the $s\to\infty$ limit in the sense that the optimal paths agree. To compare the topologies, one has to consider $w_s(\p)^{1/s}$ and take $s \to \infty$. Thus, this model allows us to interpolate between various regimes of interest and to study the phase transitions.

Notice that the model with weight function $w_s$ and edge weights $(X_e)_{e \in \text{E}}$ is equivalent to the model with weight function $w$ in (\ref{definitionWnused}) and edge weights $(X_e^s)_{e \in \text{E}}$. In the sequel, we will work only in the latter setting.

Bhamidi and van der Hofstad \cite{BhaHof12} investigated the case $X_e \overset{d}{=} E^s$, where $E$ denotes an exponentially distributed random variable with mean one and $s>0$ is a constant. They show that the hopcount $H_n$ obeys a central limit
theorem with asymptotic mean $s \log n$ and asymptotic variance $s^2 \log n$. The weight satisfies
\begin{equation}\label{weight_s_const}
\frac{W_n}{n^{-s} s \log(n)} \overset{\P}{\longrightarrow} \frac{1}{s \Gamma(1+1/s)^s} \qquad \text{for }n \to \infty.
\end{equation}
What is more, the weak limit of a linear transformation of $W_n$ is identified. For $s=1$ we obtain the stochastic mean-field model of distance. The first order behaviour of the weight of the optimal path in this model was also discussed in \cite{JAN99}.

In the present article, we extend the study to $n$-dependent $s$, following an idea in \cite{BhaHof12}. We approach the topology induced by the graph distance $w_{\text{graph}}$ by choosing a sequence $(s_n)_{n}$ with $s_n \to 0$. The scaling behaviour of weight $W_n$ and hopcount $H_n$ depends on the speed of $(s_n)$. The main focus of this paper is on the case that $s_n\log n\to \infty$ and $s_n^2\log n\to 0$. This speed of $(s_n)$ describes a universality class of edge weight distributions, as will be explained in more detail in \refsect{discussion}. Our methods are designed to deal with this class and we extend the study to a large family of distributions. The exponential edge weight can be replaced by $X_e\overset{d}{=}Z^{s_n}$ where $Z$ is a continuous, positive random variable. In addition, we discuss the edge weight distribution $X_e\overset{d}{=} e^{-(E/\rho)^{1/\alpha}}$ and show that it falls in the \emph{same} universality class when $\rho >0$ and $\alpha>2$.
As another contribution to the program of \cite{BhaHof12}, the case where $s_n\log n$ stays finite is considered for $X_e\overset{d}{=}Z^{s_n}$.

All limits in this paper are taken as $n$ tends to infinity unless stated otherwise. We say that a sequence of events $(A_n)_{n \in \N}$ occurs \emph{with high probability (whp)} if $\P(A_n) \to 1$. For sequences $(a_n)_n$, $(b_n)_n$, we write $a_n \sim b_n$ when $a_n/b_n \to 1$. The notation $a_n \approx b_n$ is used for heuristic statements only.

\subsection{Results}\label{main_results_sec}

Recall that $F_n$ denotes the continuous distribution function of the edge weights on $K_n$. In accordance with extreme value theory, we denote by $u_n$ the unique positive value with
\begin{equation}\label{defun}
n F_n(u_n)=1.
\end{equation}

Let $Z$ be a positive random variable with distribution function $G$. We assume that there exists some $\hat{x}>0$ such that $G \in C^2([0,\hat{x}])$ is strictly increasing with $\lambda:=G'(0+)>0$. Main examples for distributions satisfying these assumptions are the exponential($\lambda$) distribution and the uniform distribution on $(0,1/\lambda)$. Let $(s_n)_{n \in \N}\in (0,\infty)^{\N}$ be a null sequence.

\begin{theorem}{\bf{(Optimal path: weight and hopcount)}} \label{weight_thm}
Let $X_{e}\overset{d}{=} Z^{s_n}$ with $s_n \log n \to \infty$ and $s_n^2 \log n\to 0$. Then, with high probability,
\begin{equation}\label{weight_bounds_eq}
(1-\eps_n) e\lfloor s_n \log n \rfloor u_n \le W_n \le e\lceil s_n \log n \rceil u_n,
\end{equation}
where $\eps_n=\sqrt{s_n \log \log n}=o(1)$. In particular,
\begin{equation}\label{Wn_in_prob_eq}
\frac{W_n}{u_n s_n \log n} \overset{\P}{\longrightarrow} e \quad\qquad \text{for }n\to \infty.
\end{equation}
Moreover,
\begin{equation}\label{hop_thm}
\frac{H_n}{s_n \, \log n} \overset{\P}{\longrightarrow} 1\qquad \text{for }n\to \infty.
\end{equation}
\end{theorem}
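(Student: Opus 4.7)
I would use path counting with first- and second-moment methods. For integer $k\ge 1$ and threshold $t>0$, set
\[
N_k(t):=\#\{\p\in\S_{1,2}:\p\text{ has length }k,\ w(\p)\le t\},
\]
so that $\{W_n\le t\}=\{\sum_k N_k(t)\ge 1\}$ and $\{H_n=k,\,W_n\le t\}\subseteq\{N_k(t)\ge 1\}$. The number of self-avoiding paths of length $k$ from $1$ to $2$ in $K_n$ is $(n-2)!/(n-k-1)!\sim n^{k-1}$ for $k=O(\log n)$. The main technical input is a sharp left-tail estimate for $S_k:=\sum_{i=1}^k Z_i^{s_n}$: since $G\in C^2$ with $G'(0+)=\lambda>0$, the density of $Z^{s_n}$ near zero behaves like $(\lambda/s_n)s^{1/s_n-1}(1+o(1))$, so an induction via convolution of Beta-type densities yields
\[
\P(S_k\le t)=\frac{\lambda^k\Gamma(1+1/s_n)^k}{\Gamma(1+k/s_n)}\,t^{k/s_n}(1+o(1))
\]
uniformly in $t$ in the relevant range. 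Combining with $u_n\sim(n\lambda)^{-s_n}$ (from $nG(u_n^{1/s_n})=1$) and Stirling, the key schematic, writing $y:=k/(s_n\log n)$ and $t=\alpha k u_n$, is
\[
\E[N_k(t)]\sim\frac{(2\pi/s_n)^{(k-1)/2}}{\sqrt k}\cdot n^{y\log\alpha-1},
\]
with prefactor $n^{o(1)}$ for $k=\Theta(s_n\log n)$ because $s_n\log^2(1/s_n)\to 0$ under $s_n^2\log n\to 0$. The exponent in $n$ peaks at $\alpha=e,\ y=1$ with value $0$, isolating the critical scales $k\asymp s_n\log n$ and $t\asymp e s_n\log n\cdot u_n$.

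\textbf{Upper bound on $W_n$.} Fix $k_+:=\lceil s_n\log n\rceil$ and $t_+:=ek_+ u_n$; then $\alpha=e$, $y_+\ge 1$, the exponent $y_+-1\ge 0$, and the divergent prefactor yields $\E[N_{k_+}(t_+)]\to\infty$. I would apply Paley--Zygmund. The second moment $\E[N_{k_+}(t_+)^2]$ is partitioned by the number $j\in\{0,\dots,k_+\}$ of edges shared between two paths: the disjoint contribution ($j=0$) is $\E[N_{k_+}(t_+)]^2(1+o(1))$; for $j\ge 1$, the joint probability decomposes via independence of shared/unshared edges as $\int f_{S_j}(x)\P(S_{k_+-j}\le t_+-x)^2\,dx$, which an analogous convolution estimate shows to be negligible after summing over $j$ with the combinatorial factor for $j$-overlapping path pairs in $K_n$. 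Hence $W_n\le t_+$ whp.

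\textbf{Lower bound on $W_n$.} Set $t_-:=(1-\eps_n)e\lfloor s_n\log n\rfloor u_n$, a fixed threshold. Then $\alpha\approx(1-\eps_n)e/y$, and
\[
y\log\alpha-1=y(1-\log y)-1+y\log(1-\eps_n)=-\tfrac12(y-1)^2-\eps_n+o\bigl(\eps_n+(y-1)^2\bigr)
\]
near $y=1$. Converting the sum over $k$ to an integral via $dk=s_n\log n\,dy$ and performing the Gaussian integral around $y=1$,
\[
\sum_k\E[N_k(t_-)]\;\lesssim\;s_n\sqrt{\log n}\cdot n^{-\eps_n+o(1)}\;\longrightarrow\;0,
\]
since $\eps_n\log n=\sqrt{s_n\log^2 n\cdot\log\log n}\to\infty$ by $s_n\log n\to\infty$. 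Hence $W_n\ge t_-$ whp, which together with the upper bound yields \eqref{weight_bounds_eq} and \eqref{Wn_in_prob_eq}.

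\textbf{Hopcount and main obstacle.} For \eqref{hop_thm}, apply the first-moment bound with the \emph{fixed} threshold $t_+$ to paths of length $k=ys_n\log n$ with $|y-1|>\delta$: using $y(1-\log y)-1\le -(y-1)^2/2+O((y-1)^3)$ and a Gaussian-tail estimate on $y$, $\sum_{|y-1|>\delta}\E[N_k(t_+)]\to 0$, so
\[
\P\bigl(|H_n/(s_n\log n)-1|>\delta\bigr)\le\sum_{|y-1|>\delta}\E[N_k(t_+)]+\P(W_n>t_+)\longrightarrow 0.
\]
The principal obstacle is making the left-tail estimate for $\P(S_k\le t)$ sharp in both leading order and in the $(1-\eps_n)$-correction, uniformly in $k$: the $C^2$ hypothesis on $G$ controls the linearization error of $G$ at $0$, but tracking its propagation through the $k$-fold convolution, and through the two-path joint-density estimate required in the second-moment step, requires careful bookkeeping.
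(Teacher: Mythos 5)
Your plan reproduces the paper's proof in both structure and technical content: the first- and second-moment propositions, the enumeration of overlapping path pairs, the left-tail asymptotic $F^{*k}(t)\sim\lambda^k\Gamma(1+p_n)^k t^{kp_n}/\Gamma(1+kp_n)$, the critical exponent $\varphi(s_n\log n/k)+1$ (your $y(1-\log y)-1$) and the choice of thresholds $ek_n^\pm u_n$ are exactly the paper's ingredients.

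On the obstacle you flag at the end: the sharp $(1+o(1))$-asymptotic for $F^{*k}$ need not, and cannot, be established uniformly in all $k\le n-1$. The paper (\reflemma{asymp_sn_lem}) proves it uniformly only for $k=o(1/s_n)$, by induction on $k$ with explicit tracking of the $C^2$-error $\bar r_k(p_n)$; this range covers the critical scale $k\asymp s_n\log n$ and also the second-moment step, since both $k_n$ and $k_n-l$ stay there. For the sums over \emph{all} $k$ up to $n-1$ in your first-moment lower bound on $W_n$ and in the hopcount bounds, one only needs a one-sided upper bound valid for every $k$, namely $F^{*k}(x)\le (cp_n)^k(x/k)^{kp_n}$, obtained from the crude density bound $f(x)\le d\,p_n x^{p_n-1}$ and a Dirichlet integral (\reflemma{rough_sn_lem}); the strict negativity of the exponent away from $k\asymp s_n\log n$ then kills those terms with no uniform bookkeeping at all. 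This split — sharp two-sided asymptotics near the critical $k$, crude one-sided bounds elsewhere — is what makes the error propagation tractable. Separately, in your lower bound the prefactor control you invoke, $s_n\log^2(1/s_n)\to 0$, is true for any null sequence $s_n$ but is not the relevant condition; what is needed is $s_n\log(1/s_n)=o(\eps_n)$ so that $(cp_n)^k n^{-\eps_n}\to 0$ over the critical range of $k$. That this holds follows from $\eps_n=\sqrt{s_n\log\log n}$ together with the sandwich $\tfrac12\log\log n\le\log(1/s_n)\le 2\log\log n$ derived from $s_n\log n\to\infty$ and $s_n^2\log n\to 0$; see (\ref{logpn_control}) and the hypothesis of \refthm{lower_s_thm}.
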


The lower bound in (\ref{weight_bounds_eq}) does not need the assumption $s_n^2 \log n \to 0$. A discussion about this and further cases can be found in \refsect{discussion}. For extensions of \refthm{weight_thm} and a concentration result for the hopcount under additional assumptions see \refsect{main_proofs_sec}.

The next theorem identifies a further distribution which lies in the same universality class as $Z^{s_n}$. The connection between the distributions will be explained heuristically in \refsect{discussion}. This discussion will also clarify our choice of parameters.

\begin{theorem}\label{weight_a_thm}{\bf{($W_n$ and $H_n$ for the $e^{-(E/\rho)^{1/\alpha}}$ case)}}
Let $X_e \overset{d}{=} e^{-(E/\rho)^{1/\alpha}}$ for $\rho >0$, $\alpha >2$. Then
\[
u_n=e^{-(\log n/\rho)^{1/\alpha}}
\]
and, with $s_n=\frac{1}{\alpha \rho^{1/\alpha}} (\log n)^{-1+1/\alpha}$,
\begin{equation}\label{weight_alpha_1}
\frac{W_n}{ u_n s_n \log n} \overset{\mathbb{P}}{\longrightarrow} e \quad \qquad \text{for }n\to \infty.
\end{equation}
Moreover,
\begin{equation}\label{hop_alpha_thm}
\frac{H_n}{s_n \log n} \overset{\mathbb{P}}{\longrightarrow} 1 \qquad \text{for } n \to \infty.
\end{equation}
\end{theorem}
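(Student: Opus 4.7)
The approach is to reuse the first and second moment machinery developed for \refthm{weight_thm}, after establishing that $X_e=e^{-(E/\rho)^{1/\alpha}}$ enjoys, near $u_n$, local asymptotics compatible with those of $Z^{s_n}$ in the appropriate universality class. First I would compute $F_n(x)=\P(X_e\le x)=\exp\bigl(-\rho(-\log x)^\alpha\bigr)$, from which $nF_n(u_n)=1$ gives $u_n=e^{-(\log n/\rho)^{1/\alpha}}$. Writing $L:=-\log u_n$ so that $\rho L^\alpha=\log n$, and plugging in $s_n=\frac{1}{\alpha\rho^{1/\alpha}}(\log n)^{-1+1/\alpha}$, one checks the identities $\alpha\rho L^{\alpha-1}=1/s_n$, $s_n\log n=L/\alpha\to\infty$, and $s_n^2\log n=\frac{1}{\alpha^2\rho^{2/\alpha}}(\log n)^{-(\alpha-2)/\alpha}\to 0$; the last limit is precisely where the hypothesis $\alpha>2$ enters, placing the model in the regime treated by \refthm{weight_thm}.

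Next I would derive the local behaviour of $F_n$ near $u_n$. Writing $x=u_n\theta$ and Taylor-expanding $\rho(L-\log\theta)^\alpha$ in powers of $\log\theta/L$, the identity $\alpha\rho L^{\alpha-1}=1/s_n$ yields
\[
\rho(L-\log\theta)^\alpha=\log n-\frac{\log\theta}{s_n}+\frac{\alpha-1}{2\alpha}\cdot\frac{(\log\theta)^2}{s_n^2\log n}+O\!\left(\frac{|\log\theta|^3}{s_n^3(\log n)^2}\right),
\]
so that
\[
F_n(u_n\theta)=\frac{\theta^{1/s_n}}{n}\exp\!\left(-\frac{\alpha-1}{2\alpha}\cdot\frac{(\log\theta)^2}{s_n^2\log n}+\cdots\right).
\]
On the scale $|\log\theta|=O(s_n)$ the Gaussian correction in the exponent is $O(1/\log n)=o(1)$, so the first-order asymptotic $F_n(u_n\theta)\sim n^{-1}\theta^{1/s_n}$ that drives the proof of \refthm{weight_thm} remains valid uniformly on the scale that actually feeds into the moment computations.

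With these local asymptotics in hand, I would apply the first moment bound on the number of self-avoiding paths of length $k$ from vertex $1$ to vertex $2$ with weight $\le t$ to obtain $W_n\ge(1-o(1))eu_n s_n\log n$ whp and a matching lower bound for $H_n$, exactly as in the proof of \refthm{weight_thm}; the corresponding second moment method applied to paths of length $k\sim s_n\log n$ with weight $\le(1+\delta)eu_n k$ produces the upper bounds, completing (\ref{weight_alpha_1})--(\ref{hop_alpha_thm}). The main obstacle is the quadratic term in the Taylor expansion: unlike in the $Z^{s_n}$ setting, where $F_n(u_n\theta)/(n^{-1}\theta^{1/s_n})=1+O(1/n)$ uniformly for bounded $\theta$, here the correction is intrinsically Gaussian in $\log\theta$ with an effective width $s_n\sqrt{\log n}$, and the hypothesis $\alpha>2$ is precisely what forces $s_n^2\log n\to 0$ so that this correction is negligible where the moment method is probing. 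Verifying that the arguments of \refthm{weight_thm} really only exploit $F_n$ on this window, rather than over the full range $|\log\theta|=O(1)$ on which the Gaussian factor would be destructive, is the key technical check that certifies $e^{-(E/\rho)^{1/\alpha}}$ as a member of the same universality class as $Z^{s_n}$.
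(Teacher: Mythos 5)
Your setup — computing $u_n$, $s_n$, verifying $s_n\log n\to\infty$ and $s_n^2\log n\to 0$ for $\alpha>2$, then applying the first and second moment machinery — matches the paper's high-level strategy, and you have correctly flagged the Taylor-quadratic term as the key difficulty. However, there is a genuine gap in how you propose to handle it.

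You assert that the first-order asymptotic $F_n(u_n\theta)\sim n^{-1}\theta^{1/s_n}$ "remains valid uniformly on the scale that actually feeds into the moment computations," identifying that scale as $|\log\theta|=O(s_n)$, and you claim the Gaussian correction is "negligible where the moment method is probing." This is incorrect. Both moment bounds evaluate $F_n^{*k}$ at $b_n\approx e\,k\,u_n$, so the dominant configurations have individual edge weights $b_n/k\approx e\,u_n$, i.e.\ $\theta\approx e$, $\log\theta\approx 1$ — not $|\log\theta|=O(s_n)$. At $\theta\approx e$ the Gaussian correction has exponent of order $(s_n^2\log n)^{-1}$, which tends to \emph{infinity} precisely because $\alpha>2$ forces $s_n^2\log n\to 0$. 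Thus $F_n(u_n e)$ and $n^{-1}e^{1/s_n}$ are \emph{not} asymptotically equivalent — their ratio tends to zero — and the transfer from the $Z^{s_n}$ case cannot be carried out pointwise on the scale that actually matters. The conclusion you draw about where "the Gaussian factor would be destructive" versus negligible has the regimes reversed.

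The paper sidesteps this by never attempting to reduce to the $Z^{s_n}$ case. Instead, Lemma~\ref{Fk_estimate_lem} bounds $F^{*k}$ directly for this distribution (via Proposition~\ref{gen_rough_prop} and Lemma~\ref{strict_convex_lem}), and Theorems~\ref{upper_a_thm} and~\ref{lower_a_thm} then Taylor-expand the resulting exponents, keeping the quadratic term explicitly. That term contributes $O((\log n)^{1-1/\alpha})$ to $\log\E[N_{k_n}(b_n)]$, which is \emph{not} $o(1)$ but \emph{is} $o(\eps_n\log n)$ for the buffer sequences $(\eps_n)$ allowed in those theorems — this is where the conditions $(\log n)^{-1/\alpha}(\log\log n)^3=o(\eps_n)$ and $(\log n)^{-1+2/\alpha}\log\log n=o(\eps_n)$ come from, and the latter requires $\alpha>2$ for $\eps_n\to 0$ to be possible at all. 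The right picture is therefore not "the correction is negligible in the probed window" but "the correction is large, growing, and absorbed by the $\eps_n$ slack in the weight bound"; your proposed shortcut misses this and would not close without being reworked along the paper's lines.
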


In Theorems \ref{upper_a_thm} and \ref{lower_a_thm}, we shall prove upper and lower bounds on the weight, similar to (\ref{weight_bounds_eq}) in \refthm{weight_thm}, that give more precise information than (\ref{weight_alpha_1}).

One may ask about the behaviour when the condition $s_n \log n \to \infty$ is violated. We study this regime for $X_{e}\overset{d}{=}Z^{s_n}$, where $Z$ is a positive random variable and $G(x)=\P(Z\le x)$ satisfies $G(x)=\lambda x (1+o(1))$ for some $\lambda >0$ and $x \downarrow 0$. In this case the weight of the shortest path no longer tends to zero and the hopcount remains bounded. We are almost back in the graph topology.

\begin{theorem}\label{very_small_sn_thm}{\bf{($W_n$ and $H_n$ for very small $s_n$)}} Let $X_e\overset{d}{=}Z^{s_n}$ with
$s_n \log n \to \gamma \in [0, \infty)$. Then, for every $\eps >0$,
\[
(1-\eps)e^{-\gamma}\le W_n \le 1+\eps \quad \text{and}\quad H_n \le e^{2\gamma}\quad \text{with high probability}.
\]
\end{theorem}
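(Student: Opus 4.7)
My plan is to prove the three whp statements separately, leveraging two simple features: for $s_n \to 0$ the edge weight $X_e = Z_e^{s_n}$ concentrates at $1$ (since $s_n \log Z_e \to 0$ almost surely), and the near-linear behaviour $G(x) \sim \lambda x$ at $0$ controls the probability that any individual edge is unusually light. The upper bound on $W_n$ is immediate: the single-edge path $1\to 2$ gives $W_n \le X_{1,2} = Z_{1,2}^{s_n}$, and $Z_{1,2}^{s_n} \to 1$ in probability yields $W_n \le 1+\eps$ with high probability.

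For the lower bound, fix $\delta = (1-\eps)e^{-\gamma}$ and apply a first-moment union bound over self-avoiding paths from $1$ to $2$. Since the weights are positive, $w(\p) \le \delta$ forces every edge of $\p$ to satisfy $X_e \le \delta$, so
\begin{equation*}
\P(W_n \le \delta) \;\le\; \sum_{k=1}^{n-1} n^{k-1}\, G(\delta^{1/s_n})^k.
\end{equation*}
Writing $\delta^{1/s_n} = (1-\eps)^{1/s_n}\, e^{-\gamma/s_n}$ and using $s_n \log n \to \gamma$, one finds $e^{-\gamma/s_n} \sim n^{-1}$ and $(1-\eps)^{1/s_n} \sim n^{-\eps/\gamma}$ when $\gamma>0$, so that $G(\delta^{1/s_n}) \sim \lambda\, n^{-1-\eps/\gamma}$ and the geometric ratio $n\, G(\delta^{1/s_n}) \to 0$. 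The sum is then dominated by its first term and tends to $0$. When $\gamma = 0$, the factor $(1-\eps)^{1/s_n}$ decays faster than every negative power of $n$, and the same conclusion follows.

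For the hopcount, let $K = e^{2\gamma} + \eta$ with $\eta > 0$. If $H_n > K$ and $W_n \le 1+\eps$, then the optimal path uses more than $K$ edges of total weight at most $1+\eps$; its lightest edge, which lies in $\text{E}_n$, therefore has weight at most $(1+\eps)/K$. Consequently,
\begin{equation*}
\P(H_n > K) \;\le\; \P(W_n > 1+\eps) \;+\; \tbinom{n}{2}\, G\!\bigl(((1+\eps)/K)^{1/s_n}\bigr),
\end{equation*}
and the second summand is asymptotic to $\tfrac{\lambda}{2}\, n^{2 + (\log(1+\eps) - \log K)/\gamma}$, which vanishes whenever $K > (1+\eps) e^{2\gamma}$. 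Choosing $\eps$ small enough relative to $\eta$, both terms tend to $0$, giving $\P(H_n > e^{2\gamma} + \eta) \to 0$ for every $\eta > 0$. The main point to verify is that the very crude bound $\P(w(\p)\le\delta)\le G(\delta^{1/s_n})^k$, which uses only that every edge of $\p$ is light, is just sharp enough to produce the constant $e^{-\gamma}$; no refined large-deviation estimate for sums of $Z^{s_n}$ is needed. The $\gamma=0$ case is handled uniformly because $c^{1/s_n}$ with any fixed $c<1$ decays super-polynomially when $s_n \log n \to 0$.
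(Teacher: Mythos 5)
Your proof is correct in its overall structure and reaches the right conclusions, but your lower-bound argument takes a genuinely different route than the paper, and the intermediate asymptotics you state there are not quite right as written.

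On the lower bound: you run a first-moment union bound over all self-avoiding paths, using the crude observation that $w(\p)\le\delta$ forces every edge of $\p$ to be $\le\delta$, giving $\P(W_n<\delta)\le\sum_{k}n^{k-1}G(\delta^{p_n})^k$. The paper instead exploits the deterministic inequality $W_n\ge\min_{e\sim 1}X_e$ and computes $\P(\min_{e\sim 1}X_e\ge x)=(1-F_n(x))^{n-1}\to 1$ for $x<e^{-\gamma}$, which is shorter and more local. Both methods hinge on the same threshold (when $nF_n(\delta)\to 0$) and so give the same constant $e^{-\gamma}$; your version is a cruder instance of the paper's Proposition~\ref{lower_gen_prop}. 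The upper bound and hopcount arguments are essentially the same as the paper's (the paper phrases the hopcount step as $H_n\le X_{\{1,2\}}/\min_{e\in \text{E}_n}X_e$ and bounds $\P(\min_{e\in\text{E}_n}X_e\ge x)$; your "lightest edge on the path is small" union bound over $\binom{n}{2}$ edges is equivalent).

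The one thing to tighten: your claims $e^{-\gamma/s_n}\sim n^{-1}$ and $(1-\eps)^{1/s_n}\sim n^{-\eps/\gamma}$ are not valid under the hypothesis $s_n\log n\to\gamma$ alone. Indeed $e^{-\gamma/s_n}/n^{-1}=\exp\bigl(p_n(s_n\log n-\gamma)\bigr)$ and $p_n(s_n\log n-\gamma)$ need not tend to $0$; similarly $\log(1-\eps)\neq-\eps$. The clean way to verify the geometric ratio vanishes is to write
\[
n\,\delta^{p_n}=\exp\bigl(p_n[\,s_n\log n-\gamma+\log(1-\eps)\,]\bigr),
\]
and note that $s_n\log n-\gamma\to 0$ while $\log(1-\eps)<0$ is a fixed negative constant, so the bracket is eventually bounded away from $0$ from below and $p_n\to\infty$ forces the right-hand side to $0$. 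Then $nG(\delta^{p_n})\sim\lambda n\delta^{p_n}\to 0$ and your geometric series argument goes through (the $\gamma=0$ case you already handle correctly this way). An analogous rewriting fixes the hopcount asymptotics.
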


The upper and lower bounds given in \refthm{very_small_sn_thm} are not optimal as can be seen in the next theorem, which is proven under additional assumptions on the distribution function $G$ and the sequence $(s_n)_n$. For $\gamma \in [0,\infty)$ and $x>0$, let
\begin{equation}\label{eq:defgg}
\gga(x)=x e^{-\gamma (1-1/x)}= e^{-\gamma} x e^{\gamma/x}.
\end{equation}
Denote $\gamma_k:=k (k+1) \log \frac{k+1}{k}$ for $k \in \N$ and $\Gamma = \{\gamma_k: k \in \N\}$. If $\gamma \not\in \Gamma$, then the minimizer of $\gga$ on $\N$ is unique and we denote it by $k(\gamma) \in \{\lfloor \gamma\rfloor, \lceil \gamma \rceil\}$. If $\gamma =\gamma_k$ for some $k \in \N$, then $\gga$ has two minimizers in $\N$ given by $k$ and $k+1$. For $\gamma \in [0,2\log 2)$, $k(\gamma)=1$ and $\gga(k(\gamma))=1$. The smallest value in $\Gamma$ is $\gamma = 2\log 2$ yielding $\min_{k \in \N}\gga(k)=1$. For $\gamma >2\log 2$, the minimum of $\gga$ on $\N$ is strictly smaller than one.

For the next theorem we assume that there exists $\hat{x} \in (0,\infty)$ such that $G \in C^2([0,\hat{x}])$ and $\lambda=G'(0+)>0$. Moreover, we write $\Lambda$ for a standard Gumbel random variable, i.e., $\P(\Lambda \le t)=e^{-e^{-t}}$ for all $t \in \R$, and let
\begin{equation}\label{defak}
a_k =\frac{(2\pi)^{k/2}}{\sqrt{2\pi k}} \qquad \forall k \in \N.
\end{equation}

\begin{theorem}
\label{very_small_2nd_order}Let $X_e\overset{d}{=}Z^{s_n}$ with $s_n \log n \to \gamma \in [0,\infty)$.\\
(a) If $\gamma \in [0,2\log 2)$, then
\begin{equation}
\frac{W_n-1}{s_n} \overset{d}{\longrightarrow} \log Z \qquad \text{and}\qquad \P(H_n =1) \rightarrow 1. \label{WnHnVsmalla}
\end{equation}
(b) If $\gamma=2\log 2$, then
\[
W_n \overset{\P}{\longrightarrow} 1 \qquad \text{and} \qquad \P(H_n \in \{1,2\}) \to 1.
\]
(c) If $s_n \log n-\gamma=o((\log n)^{-1/2})$, $\gamma \not \in \Gamma$ and $\gamma >2\log 2$,  then for $k=k(\gamma)$
\begin{align}
&\frac{k}{\gga(k)s_n}\big[\lambda^{s_n} W_n-\gga(k)\big] +\frac{k-1}{s_n} \big(s_n \log n-\gamma-(s_n \log s_n)/2\big) \overset{d}{\longrightarrow} -\Lambda - \log a_k, \label{2ndOrderWnVsmallb}\\
\text{and } \quad& \qquad\P(H_n =k) \rightarrow 1. \label{HnAsympVsmallb}
\end{align}
(d) If $s_n \log n-\gamma=o((\log n)^{-1/2})$ and $\gamma=\gamma_k \in \Gamma$ for some $k \ge 2$, then
\begin{equation} \label{WnHnVsmallSpec}
W_n \overset{\P}{\longrightarrow} \gga(k) \qquad \text{and} \qquad \P(H_n \in \{k,k+1\}) \to 1.
\end{equation}
\end{theorem}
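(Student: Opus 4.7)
The plan is to use path counting with first and second moment methods. For a single path of length $k$ with i.i.d.\ weights $V_i := Z_i^{s_n}$, each $V_i$ has density $\lambda s_n^{-1} v^{1/s_n-1}(1+o(1))$ near $0$ by the hypothesis $G'(0+) = \lambda$; the Dirichlet/Liouville formula then gives
\[
\P\Bigl(\sum_{i=1}^k V_i \le w\Bigr) = \lambda^k s_n^{-k}\, w^{k/s_n}\, \frac{\Gamma(1/s_n)^k}{\Gamma(k/s_n+1)}\, (1+o(1))
\]
for $w$ below a suitable constant. A careful Stirling expansion rewrites the Gamma ratio as $a_k\, s_n^{(k+1)/2}\, k^{-k/s_n}\,(1+O(s_n))$. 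Multiplying by the number $(n-2)(n-3)\cdots(n-k) \sim n^{k-1}$ of self-avoiding paths of length $k$ from vertex~$1$ to vertex~$2$ and equating the resulting expected count to $e^{-t}$ yields a threshold weight $w_n^{(k)}(t)$ satisfying
\[
\frac{k}{s_n}\bigl[\log(\lambda^{s_n} w_n^{(k)}(t)) - \log \gga(k)\bigr] + \frac{k-1}{s_n}\bigl[s_n \log n - \gamma - (s_n \log s_n)/2\bigr] = -t - \log a_k + o(1).
\]
On the Gumbel scale where $\lambda^{s_n} w_n^{(k)}(t) - \gga(k) = o(1)$, the first bracket linearises via $\log(1+u)\approx u$ to $\frac{k}{\gga(k) s_n}[\lambda^{s_n} w_n^{(k)}(t) - \gga(k)]$, which reproduces the exact shift in \eqref{2ndOrderWnVsmallb} and explains algebraically the appearance of $\gga(k)$, the constants $a_k$, and the correction $(s_n \log s_n)/2$.

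Given this central computation, each part of the theorem reduces to two tasks: identify the set of optimal hopcounts, then analyse $W_n$ on that set. For (a) with $\gamma < 2\log 2$, $(X_{12}-1)/s_n = (Z^{s_n}-1)/s_n \overset{d}{\longrightarrow} \log Z$ by Taylor expansion; a first moment bound on paths of length $\ge 2$, together with $\gga(k')>1=\gga(1)$ for every $k'\ge 2$, forces $H_n = 1$ and $W_n = X_{12}$ whp, yielding \eqref{WnHnVsmalla}. For (b) at the borderline $\gamma = 2\log 2$ where $\gga(1)=\gga(2)=1$, the same first moment bound against length-$\ge 3$ paths, combined with $X_{12}\overset{\P}{\longrightarrow} 1$ and the above threshold calculation at $k=2$ giving $\min_{H(\p)=2} W(\p)\overset{\P}{\longrightarrow} 1$, produces $W_n\overset{\P}{\longrightarrow} 1$ and $H_n \in \{1,2\}$ whp.

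For part (c), the unique integer minimizer $k = k(\gamma)$ of $\gga$ plays the role of the optimal hopcount. A first moment argument rules out every $k'\ne k$: by unique minimality $\gga(k') - \gga(k)$ is bounded away from $0$ in any bounded range, and for $k'$ large $\gga(k') \sim e^{-\gamma} k' \to \infty$, so the expected number of paths of length $k'$ with weight $\le \gga(k)+\eps$ tends to $0$. For the Gumbel limit of $W_n$ one parametrises $w_n^{(k)}(t)$ as above and proves $N_k(w_n^{(k)}(t)) := \#\{\p \in \S_{1,2} : H(\p)=k,\, W(\p) \le w_n^{(k)}(t)\} \overset{d}{\longrightarrow} \mathrm{Poisson}(e^{-t})$ by a second moment / Chen--Stein argument; this gives $\P(W_n \le w_n^{(k)}(t)) \to 1 - e^{-e^{-t}}$, i.e.\ \eqref{2ndOrderWnVsmallb}, together with \eqref{HnAsympVsmallb}. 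Part (d) treats the borderline $\gamma = \gamma_k \in \Gamma$: the same first moment arguments with $k$ and $k+1$ playing symmetric roles yield \eqref{WnHnVsmallSpec}, while the relative likelihoods of the two hopcounts become sensitive to lower-order corrections so a distributional refinement for $W_n$ at the $s_n$-scale is unavailable.

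The main obstacle will be the second moment computation for $k\ge 2$: two length-$k$ paths sharing $j$ edges have correlated weights, so $\mathrm{Var}(N_k(w_n^{(k)}(t)))$ must be decomposed over the combinatorial overlap structure and shown to be dominated by the disjoint contribution. Because $k=k(\gamma)$ is fixed this decomposition has only finitely many cases and each overlap class contributes a lower power of $n$, but the constants must be tracked carefully through the joint density of $(V_1,\ldots,V_k)$. A related technical hurdle is that the Stirling expansion must retain accuracy of $o(1)$ after multiplication by $k/s_n$, which is exactly what the hypothesis $s_n \log n - \gamma = o((\log n)^{-1/2})$ in (c) and (d) secures.
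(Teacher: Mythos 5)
Your proposal is correct and follows essentially the same route as the paper: precise asymptotics for $F_n^{*k}$ via the Dirichlet integral and a Stirling expansion (yielding the constant $a_k$ and the factor $p_n^{(k-1)/2}$), a threshold parametrization of the weight matching the paper's $z_{k,n}(t)$ and correctly reproducing the $(s_n\log s_n)/2$ correction, a first-moment bound to isolate the optimal hopcount $k(\gamma)$, and a Chen--Stein/Poisson approximation (as in Proposition 4.4 of \cite{BhaHofHoo12pre}) to transfer the Gumbel fluctuations from the i.i.d.\ surrogate $W_n^{(\text{ind})}(k)$ to $W_n(k)$. The only wrinkle is cosmetic: in part (b) you invoke a ``threshold calculation at $k=2$'', but since $\gga(2)=1$ when $\gamma=2\log 2$ the small-weight asymptotics of $F_n^{*2}$ are not uniformly available there, and this step is in any case unnecessary because $W_n\le X_{\{1,2\}}\to 1$ and the first-moment lower bound $W_n(k)\ge(1-\eps)\min\{\gga(k),1\}$ already give $W_n\overset{\P}{\to}1$.
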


We prove \refthm{very_small_2nd_order} in \refsect{very_small_sn}, using methods similar to those of \cite{BhaHofHoo12pre}.

\subsection{Discussion}\label{discussion}
In this section, we explain the overall picture of our analysis and the key ideas behind methods and proofs. Moreover, we discuss the universality classes of first passage percolation on the complete graph. For a detailed literature review and a discussion of the connection to physical phenomena we refer the 
reader to \cite{BhaHof12} and \cite{BhaHofHoo12pre}.

First, we clarify the type of distributions we have in mind and demonstrate the connection between the considered examples by a simple heuristic.

For finding the shortest path between two vertices, the minimal available edge weight is of interest. By definition of the model, the edge weights $(X_e)_{e\in \text{E}_n}$ are independent and identically distributed according to a strictly increasing, continuous distribution function $F_n$ on $(0,\infty)$. Recall the definition of $u_n$ in (\ref{defun}). It is a well-known result from extreme value theory that $\min_{e \sim v} X_e$ is of scale $u_n$ for any vertex $v\in [n]$. If there is a continuously differentiable function $\Phi_n$ such that
\[
F_n(x)=\exp(-\Phi_n(\log x)) \qquad \text{for all }x > 0 \text{ sufficiently small},
\]
then a first order Taylor expansion implies
\begin{equation}\label{uniform_sn_eq}
F_n(x) \approx \exp\big(-\Phi_n(\log u_n)-\Phi_n'(\log u_n) \log(x/u_n)\big)=\frac{1}{n}\left(\frac{x}{u_n}\right)^{-\Phi_n'(\log u_n)}.
\end{equation}
Our simplest example is $X_e\overset{d}{=}U^{s_n}$ where $U$ is uniformly distributed on $(0,1)$. In this case $u_n =n^{-s_n}$ and $\Phi_n(x)=-x/s_n$. For $x \in (0,1)$ we have
\begin{equation}\label{gen_sn_eq}
F_n(x)=x^{1/s_n}=\frac{1}{n} \left(\frac{x}{u_n}\right)^{1/s_n}.
\end{equation}
Comparing (\ref{uniform_sn_eq}) and (\ref{gen_sn_eq}), we expect that for a general distribution function as above the analogue of $s_n$ is given by
\begin{equation}\label{defsn}
s_n \approx -\frac{1}{\Phi_n'(\log u_n)}.
\end{equation}
The generalization of the uniform distribution we consider is $X_e\overset{d}{=}Z^{s_n}$, where $Z$ is a positive random variable whose distribution function $G(x)=\P(Z\le x)$ is strictly increasing and of class $C^2([0,\hat{x}])$, for some $\hat{x}>0$, with $\lambda=G'(0+)>0$. The distribution function of the edge weights $X_e$ is given by $F_n(x)=G(x^{1/s_n})$. Hence 
\begin{equation}\label{unscaling}
u_n=G^{-1}(1/n)^{s_n}=\Big[\frac{1}{\lambda n} +o\big(1/n\big)\Big]^{s_n} =(\lambda n)^{-s_n} \big[1+o(1)\big]^{s_n}.
\end{equation}
We have $\Phi_n(x)=-\log G(e^{x/s_n})$, and therefore
\[
-\Phi_n'(\log u_n) =\frac{1}{1/n}G'(u_n^{1/s_n}) \frac{1}{s_n} u_n^{1/s_n} \sim \frac{1}{s_n}.
\]
Therefore, the heuristically derived $s_n$ from (\ref{defsn}) agrees asymptotically with the explicit $s_n$ of the distribution. An example where $s_n$ is implicit is given by the edge distribution $X_e\overset{d}{=}e^{-(E/\rho)^{1/\alpha}}$, where $E$ is an exponential random variable with mean one and $\rho >0$, $\alpha >0$. 
Note that the distribution of $X_e$ is \emph{independent} of $n$.
Here
\[
F(x)=\exp\big(-\rho (\log \tfrac{1}{x})^{\alpha}\big) \quad \text{and} \quad u_n=\exp\Big(-\big( \rho^{-1} \log n \big)^{1/\alpha}\Big).
\]
We have $\Phi(x)=\rho (-x)^{\alpha}$, and therefore the implicit $s_n$ is given by 
\begin{equation}\label{implicit_sn_eq}
s_n=\frac{(\log n)^{-1+1/\alpha}}{\alpha\rho^{1/\alpha}}.
\end{equation}
This article mainly focusses on the case that $(s_n)$ is a null sequence with $s_n^2 \log n \to 0$ but $s_n \log n \to \infty$. Since
\[
s_n \log n=\alpha^{-1} \rho^{-1/\alpha} (\log n)^{1/\alpha}
,\qquad
s_n^2 \log n = \alpha^{-2} \rho^{-2/\alpha} (\log n)^{-1+2/\alpha},
\]
the sequence $s_n \log n$ will tend to infinity for every choice of $\alpha>0$. However, $s_n^2 \log n$ tends to zero if and only if $\alpha >2$. This explains our parameter choice in  \refthm{weight_a_thm}.

Our proofs rely on first and second moment methods. The idea is that in the considered regime, as soon as there is one path of low weight, there will be many. In fact, we show in \refsect{main_proofs_sec} that there are sequences $(w_n)$ approximating the weight $W_n$ and $(k_n)$ approximating the hopcount $H_n$, such that the expected number of paths with weight at most $w_n$ and length $k_n$ tends to infinity. This large number of attractive paths is reliable in the sense that its variance is of lower order than its squared mean.
To estimate the required moments for these methods, we develop a path counting technique which captures numerous important characteristics of pairs of paths. These techniques may 
be of independent interest.

The second main idea is that an optimal path will contain edges which are all of approximately the same weight. In the spirit of the heuristics above, $\mathbb{E}[\min_{e \sim 1} X_e ]$ is of order $u_n$. Indeed, we find that the optimal path uses only edges with weight of order $u_n$. The hopcount can be approximated by $k_n = \lfloor s_n \log n \rfloor$ in agreement with the case $s_n=s>0$ constant. Therefore, the weight of the optimal path is of order $u_ns_n \log n$. This intuition is the basis 
for our estimates of the distribution function of $w(\p)$.

Like for the hopcount, our results for the weight $W_n$ are also consistent with the case where $s$ is a positive constant. By (\ref{weight_s_const}), when $s$ is constant $W_n / (u_n s_n \log n)$ converges to $s^{-1}\Gamma(1+1/s)^{-s}$ for $n\to\infty$, and by Stirling's formula $s^{-1}\Gamma(1+1/s)^{-s} \sim e$ for $s \downarrow 0$, in agreement with 
\refthm{weight_thm}. It would be of interest to understand also the fluctuations for the case $s_n \to 0$ 
and to compare these with the results of \cite{BhaHof12}.

It is fascinating that $n$-independent and $n$-dependent edge weight distributions, which at first sight have little in common, fall into the \emph{same} universality class. Surprisingly, first passage percolation problems in the same class do not have to have the same scaling for $W_n$: compare \refthm{weight_thm} and \refthm{weight_a_thm}. Indeed, as we shall observe below, the weight of the optimal path can even converge to zero for one distribution, but not for another in the same class. We conclude that the detailed asymptotics of $W_n$ are not an invariant of a universality class.  Nevertheless, we shall see that problems in the same class can be analyzed with similar methods.

A natural question is how the analyzed quantities behave in the different regimes. We will now discuss the current knowledge and conjectures.

{\bf{(i) Very small $\mathbf{s_n}$:}} When $s_n \log n$ does not tend to infinity but to a finite value $\gamma$, we call $s_n$ very small. In this regime $W_n/u_n$ is asymptotically bounded from above and below by finite, positive constants and the hopcount is tight. This was proven in \refthm{very_small_sn_thm} for $X_{e}\overset{d}{=}Z^{s_n}$, since $u_n \sim (\lambda n)^{-s_n} \to e^{-\gamma}$. To give a further example for an edge weight distribution in this regime, consider $X_e\overset{d}{=} E^{-\gamma}$ for $\gamma>0$ and $E$ an exponential random variable with mean one. In this case, $F(x)=\exp(-x^{-1/\gamma})$ and $u_n=(\log n)^{-\gamma}$. According to (\ref{defsn}), $\Phi(x)=e^{-x/\gamma}$ implies
\[
s_n= \frac{\gamma}{\log n}
\]
and $s_n \log n=\gamma$ is constant. This family of distributions was analyzed in great detail in \cite{BhaHofHoo12pre}. Bhamidi et al.\ show that the hopcount is tight and converges in distribution to a random variable with values in $\{\lfloor \gamma+1\rfloor, \lceil \gamma+1\rceil\}$. For the weight, it is shown that $W_n/u_n$ converges in probability to a finite, positive constant. The authors identify also the second order asymptotics, showing that a linear transformation of the weight converges weakly to a Gumbel distribution. Their methods apply under suitable assumptions also to the very small $s_n$ case, which is exploited in the proof of \refthm{very_small_2nd_order}; see \refsect{very_small_sn}.
Notice that, for edge weights $X_e\overset{d}{=} E^{-\gamma}$, the sequence $(u_n)_n$, and therefore also $W_n$, converge to zero, whereas in the very small $s_n$ case discussed in \refthm{very_small_sn_thm} $u_n$ converges to unity and $W_n$ is bounded away from zero.

{\bf{(ii)} $\mathbf{s_n} \boldsymbol{\log} \, \mathbf{n\to \infty}$ and $\mathbf{s_n^2} \boldsymbol{\log} \, \mathbf{n\to 0}$\bf{:}} The current article is concerned with the first order asymptotics. For the second order, we expect that there exists a sequence $k_n\approx s_n \log n$ such that $H_n=k_n$ whp.

{\bf{(iii)} $\mathbf{s_n}  \mathbf{\to 0}$ and $\mathbf{s_n^2} \boldsymbol{\log} \, \mathbf{n\to \infty}$\bf{:}} The case that $s_n$ is a null sequence but $s_n^2 \log n$ converges to infinity (or to a finite value) was not discussed yet. For the distribution $X_e\overset{d}{=}e^{-(E/\rho)^{1/\alpha}}$ the corresponding parameter choice is $\alpha \in (1,2)$. Our results for the lower bound on the weight of the optimal path are valid in this regime. Since the first order asymptotics in the cases $s_n^2 \log n \to 0$ and $s$ constant agree, we expect these to remain valid. Moreover, we believe that the hopcount satisfies a central limit theorem. Results for this case would be of great interest.

{\bf{(iv)} $\mathbf{s >0}$\bf{ constant:}} The case of a fixed $s$ is treated in \cite{BhaHof12}, see the discussion around (\ref{weight_s_const}).

{\bf{(v)} $\mathbf{s_n \to \infty}$\bf{:}} The opposite extreme to $(s_n)$ being a null sequence is when $s_n$ tends to infinity. This case is discussed in a separate article \cite{EGHN12bPre}. The speed of $(s_n)$ plays again an important role and one expects different behaviours when $s_n\ll n^{1/3}$, $s_n \approx n^{1/3}$ or $s_n \gg n^{1/3}$.

\subsection{Outline}

The outline of this article is as follows. We start by presenting the path counting technique in \refsect{path_counting_sec}. This technique is used to establish first and second moment methods for the scaling of the weight of the smallest weight path in a general setup in \refsect{fst_snd_mm_sec}. To apply these methods, estimates on the distribution function of $w(\p)$ are required. A general method how to estimate distribution functions of this type is presented in \refsect{gen_df_sec}. In \refsect{sn_df_sec} this method is made explicit for the case that $X_e \overset{d}{=} Z^{s_n}$ and, for not too long paths, the precise scaling of the distribution function close to zero is derived. Then all ingredients are collected and we prove Theorems \ref{weight_thm}, \ref{very_small_sn_thm} and \ref{very_small_2nd_order} in \refsect{main_proofs_sec} and \refthm{weight_a_thm} in \refsect{alpha_sec}.

\section{Path counting}\label{path_counting_sec}
To estimate the weight of the optimal path and the number of edges it uses, it is of interest how many paths of a certain length and weight there are. The set of paths between vertex $1$ and $2$ is denoted by
\begin{align*}
\S_{1,2}&:=\{\p:\, \p \text{ self-avoiding path between vertex $1$ and $2$ in $K_n$}\},\\
\S_{1,2}(k)&:= \{\p \in \S_{1,2}: \,\p\text{ uses exactly $k$ edges}\} \qquad \qquad \qquad\forall k \in \N.
\end{align*}
If two paths do not have any common edges, then their weights are independent. When they do overlap, their weights are more strongly correlated the more edges they have in common. Hence, we will estimate the number of pairs of paths in $\mathcal{S}_{1,2}(k)$ that share $l$ edges, $l\in [k]$. It is important for our counting that the paths are self-avoiding. We start with some terminology.

\begin{definition}{\bf{(Excursions and gaps)}}
Let $\p,\q \in \S_{1,2}$.
Every connected subpath of $\q$ which contains only edges in $\q \setminus \p$, starts and ends with a vertex in $\p$, and has no other vertices in common with $\p$ is called an \emph{excursion} of $\q$ from $\p$.
To order excursions, the path $\q$ is considered as a directed path with starting point $1$ and endpoint $2$. The excursions of $\p$ from $\q$ are called \emph{gaps}.
\end{definition}

The counting method developed in this section allows us to estimates the number of pairs of paths that satisfy numerous conditions on their shape. The results obtained are more detailed than necessary for the situation of this article and the proof of \refthm{path_counting_thm}. We chose to include the detailed estimates because they are of independent interest and can be used to generalize the methods of this paper to other graphs. For instance, this was done to prove an upper bound for the hopcount in the context of inhomogeneous random graphs. The procedure explained below was altered slightly such that an intersection of two paths without a common edge does not trigger a new excursion (see Section 9.4 and in particular Lemma 9.18 in \cite{Hof12}).

\begin{theorem}\label{path_counting_thm}{\bf{(The number of pairs of paths)}}
If $k=o(n^{1/3})$, then
\[
\mathcal{P}(k,l):=\{(\p,\q)\in \mathcal{S}_{1,2}(k)^2 : |\p \cap \q|=l \}
\]
satisfies
\[
|\mathcal{P}(k,l)|=\left\{\begin{array}{ll}
(l+1)n^{2k-l-2}(1+O(k^4/n)) & \;\text{if }l\in [k-2],\\
0 & \;\text{if }l=k-1,\\
n^{k-1} (1+O(k^2/n)) & \;\text{if }l=k,\end{array}\right.
\]
where the error term in the first case holds uniformly in $l$.
\end{theorem}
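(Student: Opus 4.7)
My plan is to split by the value of $l$ and in each case identify the combinatorially dominant configurations.

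For $l = k$ the pair must satisfy $\p = \q$, so the count reduces to $|\S_{1,2}(k)|$, which is the falling factorial $(n-2)(n-3) \cdots (n-k) = n^{k-1}(1 + O(k^2/n))$. For $l = k - 1$, I would argue no such pair exists: the $k-1$ shared edges form at most two disjoint sub-paths of $\p$---a prefix from $1$ to some $u$ and a suffix from some $v$ to $2$, separated by the single unshared middle edge of $\p$. The unshared edge in $\q$ must reconnect these into a self-avoiding path from $1$ to $2$; a short case analysis shows self-avoidance forces this edge to be $(u, v)$, coinciding with the unshared edge of $\p$ and contradicting $|\p \cap \q| = k - 1$.

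For $l \in \{0, 1, \ldots, k - 2\}$, the strategy is to decompose each pair $(\p, \q)$ by its shared-edge structure: the $l$ common edges form $j \ge 0$ maximal shared sub-paths $S_1, \ldots, S_j$ with $\sum_i |S_i| = l$, interleaved with gaps in $\p$ and excursions in $\q$. Writing $H = \p \cup \q$ (connected, with $2k - l$ edges), its vertex count is $V_H = 2k - l - c + 1$ where $c = c(H)$ is the cyclomatic number, so each configuration admits at most $n^{V_H - 2}$ vertex labellings. The dominant configurations are those with $c = 1$. A short analysis of the forced vertex intersections (shared-segment vertices together with vertices $1$ and $2$) shows $c = 1$ arises only in four types: (a) $j = 0$, feasible only for $l = 0$; (b) $j = 1$ with $S_1$ forming the common $1$-side prefix of both paths; (c) $j = 1$ with $S_1$ the common $2$-side suffix; (d) $j = 2$ with $S_1$ the common $1$-side prefix of length $l_1 \in \{1, \ldots, l - 1\}$ and $S_2$ the common $2$-side suffix of length $l - l_1$. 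Self-avoidance and the fixed positions of $1$ and $2$ pin each shared segment to the same position and orientation within both paths. A direct count per type, as a product of falling factorials for shared-segment vertices, $\p$-remainder internal vertices, and $\q$-remainder internal vertices, gives $n^{2k - l - 2}(1 + O(k^2/n))$ per type, and summing across (a)--(d) yields $(l + 1) n^{2k - l - 2}(1 + O(k^2/n))$.

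Configurations with $c \ge 2$ correspond to shared segments placed in the interior of at least one path or to extra vertex intersections beyond the forced ones. A configuration with cyclomatic number $c$ contributes at most $n^{2k - l - c - 1}$ labellings, and the number of structural placements (positions, orientations, lengths of segments, gaps, excursions, and extra intersection vertices) is bounded by a polynomial in $k$ of degree $O(c)$. Summing over $c \ge 2$ yields a total of order $k^4 n^{2k - l - 3}$, which matches the claimed relative error $O(k^4/n)$.

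The main obstacle is the structural analysis at $c = 1$: I would need to verify that only the four types above realize the minimum cyclomatic number, rule out configurations with shared segments at mismatched positions in $\p$ and $\q$ via self-avoidance, and carry out the exact per-type count. Closely tied is the uniform bound on the $c \ge 2$ tail, where one must simultaneously control the number of structural types and the vertex labellings per type across all $l$, so that every sub-leading configuration is absorbed into the $O(k^4/n)$ correction.
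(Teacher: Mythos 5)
Your proposal is essentially the paper's argument in different language: the cyclomatic number $c$ of the union graph $H=\p\cup\q$ coincides with the paper's count $m$ of excursions of $\q$ from $\p$ (indeed $V_H=2k-l-m+1$, so $c=E_H-V_H+1=m$), your four $c=1$ types enumerate exactly the paper's $l+1$ choices of $(x_1,x_2)$ with $x_1+x_2=l$, and your degree-$O(c)$ polynomial bound on structural placements is what the paper's explicit factor $\binom{m+l}{m}\binom{k-l-1}{m-1}^2(m-1)!\,2^{m-1}$ makes precise. The two gaps you flag are exactly the content of the paper's parametrization by gap lengths, excursion lengths, segment orderings and orientations, so closing them requires no new idea beyond what you have sketched.
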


\begin{proof} Our counting idea is the following: We think of one path $\p$ being chosen explicitly first. For the further counting we fix the number $m$ of excursions the second path $\q$ is making from $\p$. Now the common vertices and edges are chosen. The path $\q$ is constructed from these by first ordering the common pieces, then directing them and choosing the length of the excursions. In the end the actual paths of the excursions are determined.

This procedure is now made precise: the path $\p$ has $k$ edges and, therefore, $k+1$ vertices. Vertex $1$ and $2$ are given. Hence, a sequence of $k-1$ vertices is chosen from $n-2$ vertices giving us $\prod_{j=2}^k (n-j)$ possible choices. In case $l=k$, this already proves the claim since Stirling's estimates imply for $k = o(n^{1/2})$
\begin{equation}\label{size_S12k_eq}
|\mathcal{P}(k,k)|=|\mathcal{S}_{1,2}(k)|= \prod_{j=2}^k (n-j)= \frac{(n-2)!}{(n-k-1)!}=n^{k-1}(1+O(k^2/n)).
\end{equation}

Two self-avoiding paths with $k$ edges between vertex $1$ and $2$ cannot have exactly $k-1$ common edges. Hence, we can assume $l\le k-2$ and $k\ge 3$ from now on.

We determine the number of possible choices for common vertices and edges when $q$ makes $m$ excursions from $p$. One example is displayed in Figure 1.

\begin{center}
\begin{tikzpicture}

\draw [thick] (0,0) -- (8,0);
\draw [green!80!black, thick] [yshift= 2pt] (0,0) -- (1,0);
\draw [green!80!black, thick] [yshift= 2pt] (2,0) -- (4,0);

\foreach \x in {0,...,8}
\filldraw (\x,0) circle (0.08cm);

\foreach \x in {0,...,4}
\draw [green!80!black, thick] (\x,0) circle (0.08cm);

\draw [green!80!black, thick] (6,0) circle (0.08cm);
\draw [green!80!black, thick] (8,0) circle (0.08cm);

\end{tikzpicture} \parskip 10pt

{\small {\bf Figure 1:} A possible choice of common vertices and edges for $k=8$, $l=3$ and $m=3$. Here, $x_1=1$, $x_2=2$, $x_3=x_4=0$ and $r_1=1$, $r_2=r_3=2$.}
\end{center}

Denote by $x_j$ the number of common edges before gap $j$, where $j \in [m]$, and by $x_{m+1}$ the number of common edges after the $m$th gap. There are
\[
|\{ (x_1,\dotsc,x_{m+1}) \in \mathbb{N}_0^{m+1}: \sum_{j=1}^{m+1} x_j=l\}| = \binom{m+l}{m}
\]
possible choices of these parameters. When $r_j$ denotes the length of gap $j$ (in edges), then
\[
|\{(r_1,\dotsc,r_m) \in \N^m : \sum_{j=1}^m r_j = k-l\}| = |\{(\hat{r}_1,\dotsc,\hat{r}_m) \in \mathbb{N}_0^m : \sum_{j=1}^m \hat{r}_j= k-l-m\}|=\binom{k-l-1}{m-1}
\]
is the number of possible gap lengths. Here we have taken into account that a gap needs to have at least length $1$. In total there are $\binom{m+l}{m} \binom{k-l-1}{m-1}$ possible choices for the common edges and vertices.

In the next step we determine the order of the $m+1$ subpaths in $\p \cap \q$. The first one contains vertex $1$, the last one contains vertex $2$. Hence, we have $(m-1)!$ choices to order the remaining. If one of these parts contains an edge, there are two possible choices in which direction it can be used. See Figure 2 for an example.

\begin{center}
\begin{tikzpicture}

\foreach \x in {2,5}
\draw [green!80!black, thick] (\x-1,0) -- (\x,1) -- (\x+1,0);

\draw [green!80!black, thick] (1,3) -- (2.5,2) -- (4,3);
\draw [green!80!black, thick] (3,3) -- (4.5,4) -- (6,3);

\foreach \y in {0,3}
{
\draw [thick] (0,\y) -- (7,\y);
\foreach \x in {0,3,6}
{
\draw [green!80!black, thick] [yshift= 2pt] (\x,\y) -- (\x+1,\y);
}
\foreach \x in {0,...,7}
\filldraw (\x,\y) circle (0.08cm);
\foreach \x in {0,1,3,4,6,7}
\draw [green!80!black, thick] (\x,\y) circle (0.08cm);
}

\foreach \x in {2,5}
\filldraw [green!80!black, thick] (\x,1) circle (0.08cm);

\filldraw [green!80!black, thick] (2.5,2) circle (0.08cm);
\filldraw [green!80!black, thick] (4.5,4) circle (0.08cm);

\end{tikzpicture} \parskip 10pt

{\small {\bf Figure 2:} The black path is $\p$, the green one is $\q$. We see two different ways of orienting the middle piece, even though there is no choice for ordering the common pieces. $k=7$, $l=3$ and $m=2$.}
\end{center}

Hence, the number of possible directions equals $2^{\sum_{j=2}^m \mathbbm{1}_{x_j \ge 1}} \le 2^{m-1}$. When $m=1$, this inequality is an identity.

Now the lengths of the excursions are determined. If a gap has only one edge and the next common vertex is the other side of this edge, then the excursion needs to have at least $2$ edges. Otherwise an excursion may have only one edge (see Figure 3 or Figure 4 for an example). Hence, we can bound the number of possible choices for the lengths of the excursions by
\[
\big|\{(t_1,\dotsc,t_m) \in \N^m : \sum_{j=1}^m t_j = k-l\}\big|=\binom{k-l-1}{m-1}.
\]
When $m=1$ the excursion has length $k-l$ giving us no choice. Hence, the estimate is precise in the case $m=1$. In Figure 3 the used variables are visualized.

\begin{center}
\begin{tikzpicture}

\tikzstyle{every node}=[font=\small]

\foreach \x in {0,4,6}
\draw [green!80!black, thick] [yshift= 2pt] (\x,0) -- (\x+1,0);

\foreach \x in {1,4}
\draw [green!80!black, thick] (\x,0) .. controls (\x+.5,-1) and (\x+1.5,-1) .. (\x+2,0);

\draw [green!80!black, thick] (3,0) -- (4,1) -- (5,0);
\filldraw [green!80!black, thick] (4,1) circle (0.08cm);

\draw [thick] (0,0) -- (7,0);

\foreach \x in {0,...,7}
\filldraw (\x,0) circle (0.08cm);
\foreach \x in {0,1,3,4,5,6,7}
\draw [green!80!black, thick] (\x,0) circle (0.08cm);

\draw (1.2,0.4) node{$x_1=1$};
\draw (3.5,-0.4) node{$x_2=0$};
\draw(5.3,0.4) node{$x_3=1$};
\draw (6.7,0.4) node{$x_4=1$};
\draw (2,-1) node{$t_1=1$};
\draw (4,1.4) node{$t_2=2$};
\draw (5,-1) node{$t_3=1$};
\draw (2,1) node{$r_1=2$};
\draw (3.5,-1) node{$r_2=1$};
\draw (5.3,1) node{$r_3=1$};

\end{tikzpicture}\parskip 10pt

{\small {\bf Figure 3:} Visualization of used variables. $k=7$, $l=3$ and $m=3$.}
\end{center}

Finally, the explicit shape of the excursions is determined by choosing their sequence of vertices. Path $\q$ has $k+1$ vertices. Exactly $l+m+1$ vertices are in common with $\p$: one at the end of each common edge, one is vertex $1$ itself and one at the end of each excursion. Moreover, $k+1$ vertices were already used by $\p$. Hence, a sequence of $k+1 - (l+m+1)$ vertices needs to be chosen from $n-(k+1)$ vertices, leaving us with
\[
\prod_{j=0}^{k+1 - (l+m+1) -1} (n- (k+1) -j)= \prod_{j=k+1}^{2k-l-m}(n-j)
\]
possible choices. As each excursion uses at least one edge, $m$ is at most $k-l$. Figure 4 shows that this bound cannot be improved.

\begin{center}
\begin{tikzpicture}

\draw [thick] (0,0) -- (5,0);
\draw [green!80!black, thick] [yshift= 2pt] (1,0) -- (2,0);
\draw [green!80!black, thick] [yshift= 2pt] (3,0) -- (4,0);

\draw [green!80!black, thick] (0,0) .. controls (1,1) and (2,1) .. (3,0);
\draw [green!80!black, thick] (1,0) .. controls (2,-1) and (3,-1) .. (4,0);
\draw [green!80!black, thick] (2,0) .. controls (3,1) and (4,1) .. (5,0);

\foreach \x in {0,...,5}
\filldraw (\x,0) circle (0.08cm);
\foreach \x in {0,...,5}
\draw [green!80!black, thick] (\x,0) circle (0.08cm);

\end{tikzpicture}

{\small {\bf Figure 4:} Excursions with length one and the maximal choice of $m$.\\
$k=5$, $l=2$ and $m=3$.}
\end{center}

We obtain for $l\in [k-2]$
\[
|\mathcal{P}(k,l)|\le \sum_{m=1}^{k-l} \binom{m+l}{m} \binom{k-l-1}{m-1}^2 (m-1)! 2^{m-1} \prod_{j=2}^{2k-l-m} (n-j).
\]
By Stirling's Formula $\prod_{j=2}^{2k-l-1} (n-j)=n^{2k-l-2}(1+O(k^2/n))$. The coefficient for $m=1$ equals $l+1$ and was determined precisely. Using $n-j \le n$, it remains to show that
\[
\sum_{m=2}^{k-l} \binom{m+l}{m} \binom{k-l-1}{m-1}^2 (m-1)! 2^{m-1} n^{-m+1}=O(k^4/n).
\]
To this end, we estimate
\begin{align*}
\binom{m+l}{m} &= \frac{(l+m)\cdots(l+1)}{m!}\le \frac{k^m}{m!} \quad \qquad \qquad \forall \, m \le k-l,\\
\binom{k-l-1}{m-1}^2 (m-1)! &= \left(\frac{(k-l-1)!}{(k-l-m)!}\right)^2 \frac{1}{(m-1)!} \le \frac{k^{2(m-1)}}{(m-1)!}.
\end{align*}
In particular, $k=o(n^{1/3})$ implies
\[
\sum_{m=2}^{k-l} \binom{m+l}{m} \binom{k-l-1}{m-1}^2 (m-1)! 2^{m-1} n^{-m+1} \le k \sum_{m=2}^{\infty} \left(\frac{2k^3}{n}\right)^{m-1} = k \frac{\frac{2k^3}{n}}{1-\frac{2k^3}{n}}=O(k^4/n).\qedhere
\]
\end{proof}

\section{\texorpdfstring{First and second moment methods for $W_n$}{1st and 2nd moment methods for Wn}}\label{fst_snd_mm_sec}

This section is devoted to the introduction of a general method to obtain first order estimates for the weight of the shortest path $W_n$. We start with some notation.

For $b \ge 0$ and $k \in \N$, the number of paths between vertex $1$ and $2$ with weight at most $b$ using exactly $k$ edges is denoted by
\[
N_k(b):= \sum_{\p \in \mathcal{S}_{1,2}(k)} \mathbbm{1}_{w(\p)\le b}.
\]
We remark that $\S_{1,2}, \S_{1,2}(k)$ and $N_k(x)$ depend on $n$. This is suppressed in the notation since it will always be clear which graph is considered. For the distribution function of the weight of a path $\p \in \S_{1,2}(k)$ we write
\[
F_n^{*k}(x):=\P\Big(\sum\limits_{e \in \p} X_e\le x\Big) \qquad \forall \, x\in \R.
\]

\begin{proposition}\label{lower_gen_prop}{\bf{(Lower bound for $W_n$)}} Let $(d_n)_{n\in \mathbb{N}}\in (0,\infty)^{\N}$ such that
\[
\sum_{k=1}^{n-1} n^{k-1}F_n^{*k}(d_n) \to 0 \qquad \text{for } n\to \infty.
\]
Then
\[
W_n \ge d_n \qquad \text{with high probability}.
\]
\end{proposition}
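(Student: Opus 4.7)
The plan is to apply a straightforward first moment (Markov) argument to the total count of short-weight paths. Define
\[
N(d_n) := \sum_{k=1}^{n-1} N_k(d_n) = \sum_{k=1}^{n-1} \sum_{\p \in \mathcal{S}_{1,2}(k)} \mathbbm{1}_{w(\p) \le d_n},
\]
i.e.\ the total number of self-avoiding paths between vertex $1$ and vertex $2$ whose weight is at most $d_n$. The observation driving the proof is that
\[
\{W_n \le d_n\} = \{N(d_n) \ge 1\},
\]
since $W_n$ is realized by some path in $\mathcal{S}_{1,2} = \bigcup_{k=1}^{n-1} \mathcal{S}_{1,2}(k)$. (Any self-avoiding path in $K_n$ uses at most $n-1$ edges.)

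Next I would estimate $\mathbb{E}[N(d_n)]$. Since the edge weights along a fixed self-avoiding path $\p \in \mathcal{S}_{1,2}(k)$ are i.i.d.\ with distribution $F_n$, and there are $k$ of them,
\[
\mathbb{E}[\mathbbm{1}_{w(\p) \le d_n}] = \P\Bigl(\sum_{e \in \p} X_e \le d_n\Bigr) = F_n^{*k}(d_n).
\]
Combining this with the elementary count $|\mathcal{S}_{1,2}(k)| = \prod_{j=2}^k (n-j) \le n^{k-1}$ (already recorded in \eqref{size_S12k_eq} as $n^{k-1}(1+O(k^2/n))$, but for an upper bound we do not even need Stirling), linearity of expectation yields
\[
\mathbb{E}[N(d_n)] = \sum_{k=1}^{n-1} |\mathcal{S}_{1,2}(k)| \, F_n^{*k}(d_n) \le \sum_{k=1}^{n-1} n^{k-1} F_n^{*k}(d_n).
\]
By assumption the right-hand side tends to $0$.

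Finally I would conclude via Markov's inequality:
\[
\P(W_n \le d_n) = \P(N(d_n) \ge 1) \le \mathbb{E}[N(d_n)] \to 0.
\]
Since $F_n$ is continuous the event $\{W_n = d_n\}$ has probability $0$, so equivalently $\P(W_n < d_n) \to 0$ and thus $W_n \ge d_n$ whp.

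There really is no obstacle: the statement is a clean packaging of the first moment method, and all the combinatorial work has been (or will be) absorbed into the hypothesis $\sum_k n^{k-1} F_n^{*k}(d_n) \to 0$. The substantive difficulty lies entirely downstream, in verifying this hypothesis for the concrete distributions of interest---this requires the sharp estimates on $F_n^{*k}$ promised for Sections~\ref{gen_df_sec} and~\ref{sn_df_sec}---but the proposition itself demands only the bound $|\mathcal{S}_{1,2}(k)| \le n^{k-1}$ and Markov's inequality.
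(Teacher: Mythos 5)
Your proof is correct and follows essentially the same route as the paper: a first moment bound on the number of low-weight paths, using $|\mathcal{S}_{1,2}(k)| \le n^{k-1}$ and Markov's inequality. (One tiny simplification: since you bound $\P(W_n \le d_n) \to 0$, you already get $\P(W_n < d_n) \to 0$ for free, so the continuity remark at the end is unnecessary.)
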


\begin{proof}
Note that $W_n <d_n$ implies that there exists a $k \in \N$ such that $N_k(d_n)\ge 1$. Markov's inequality and (\ref{size_S12k_eq}) now yield
\[
\P(W_n < d_n) \le \sum_{k=1}^{n-1} \E[N_k(d_n)] = \sum_{k=1}^{n-1} \prod_{j=2}^k (n-j) F_n^{*k}(d_n) \le \sum_{k=1}^{n-1} n^{k-1}F_n^{*k}(d_n)=o(1).\qedhere
\]
\end{proof}

The method for the upper bound relies on the idea that the variance of attractive paths is small compared to its squared mean. Or, more intuitively, with high probability there is a reliably large number of attractive paths.

\begin{proposition}\label{gen_upper_prop}{\bf{(Upper bound for $W_n$)}} Let $(b_n)_{n\in \mathbb{N}}\in (0,\infty)^{\N}$. When there exists a sequence $(k_n)_{n\in \mathbb{N}}\in \N^{\N}$ such that $k_n=o(n^{1/4})$ and
\begin{align*}
\sum_{l=1}^{k_n-2} (l+1)n^{-l} \frac{F^{*(k_n-l)}_n (b_n)}{F^{*k_n}_n (b_n)}\to 0& \qquad \text{for }n\to \infty, \\
\E[N_{k_n}(b_n)] \to \infty& \qquad \text{for } n\to \infty,
\end{align*}
then
\[
W_n \le b_n \qquad \text{with high probability}.
\]
\end{proposition}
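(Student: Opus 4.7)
The plan is to apply a second moment method to the path count $N_{k_n}(b_n)$. Since $\{W_n \le b_n\} \supseteq \{N_{k_n}(b_n) \ge 1\}$, the Paley--Zygmund inequality $\P(N_{k_n}(b_n) \ge 1) \ge (\E[N_{k_n}(b_n)])^2/\E[N_{k_n}(b_n)^2]$ reduces the statement to showing
\begin{equation*}
\E\big[N_{k_n}(b_n)^2\big] = (1+o(1))\big(\E[N_{k_n}(b_n)]\big)^2.
\end{equation*}
The hypothesis $\E[N_{k_n}(b_n)] \to \infty$ is then what ensures the Paley--Zygmund ratio tends to $1$.

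To control the second moment I would split pairs of paths by the number $l$ of shared edges:
\begin{equation*}
\E\big[N_{k_n}(b_n)^2\big] = \sum_{l=0}^{k_n} \sum_{(\p,\q)\in \mathcal{P}(k_n,l)} \P\big(w(\p) \le b_n,\,w(\q) \le b_n\big).
\end{equation*}
The crucial correlation estimate is
\begin{equation*}
\P\big(w(\p) \le b_n,\,w(\q) \le b_n\big) \le F_n^{*k_n}(b_n)\,F_n^{*(k_n-l)}(b_n) \qquad \forall\,(\p,\q)\in \mathcal{P}(k_n,l).
\end{equation*}
To establish it, condition on the weight $Y=\sum_{e\in\p\cap\q}X_e$ of the shared edges. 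Since the edge sets $\p\cap\q$, $\p\setminus\q$ and $\q\setminus\p$ are pairwise disjoint, their weight sums are independent, and $w(\q\setminus\p)$ is a sum of $k_n-l$ i.i.d.\ edge weights. Therefore $\P(w(\q) \le b_n \mid \p, Y) = \P(w(\q\setminus\p) \le b_n - Y \mid Y) \le F_n^{*(k_n-l)}(b_n)$ by monotonicity of distribution functions, and integrating $\mathbbm{1}\{w(\p)\le b_n\}$ yields the bound. This conditioning step is the only non-bookkeeping part and is the main obstacle; once it is in place the rest is arithmetic.

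With the correlation bound in hand, inject \refthm{path_counting_thm}, using $k_n = o(n^{1/4})$ so that every $O(k_n^4/n)$ error is $o(1)$. The $l=0$ contribution is at most $n^{2k_n-2}(1+o(1))\big(F_n^{*k_n}(b_n)\big)^2 = (1+o(1))\big(\E[N_{k_n}(b_n)]\big)^2$, invoking $|\mathcal{S}_{1,2}(k_n)|=n^{k_n-1}(1+o(1))$ from (\ref{size_S12k_eq}). The $l=k_n-1$ term vanishes by \refthm{path_counting_thm}, and the $l=k_n$ diagonal term equals $\E[N_{k_n}(b_n)] = o\big((\E[N_{k_n}(b_n)])^2\big)$ since $\E[N_{k_n}(b_n)] \to \infty$. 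For $l \in [1, k_n-2]$, dividing the contribution by $\big(\E[N_{k_n}(b_n)]\big)^2 = n^{2k_n-2}(1+o(1))\big(F_n^{*k_n}(b_n)\big)^2$ produces, up to a $1+o(1)$ factor, precisely
\begin{equation*}
\sum_{l=1}^{k_n-2} (l+1)\, n^{-l}\, \frac{F_n^{*(k_n-l)}(b_n)}{F_n^{*k_n}(b_n)},
\end{equation*}
which tends to $0$ by hypothesis. Summing the four ranges gives $\E[N_{k_n}(b_n)^2]/(\E[N_{k_n}(b_n)])^2 \to 1$, so $N_{k_n}(b_n) \ge 1$ whp and hence $W_n \le b_n$ whp.
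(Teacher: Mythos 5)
Your proof is correct and follows essentially the same route as the paper: a second moment method on $N_{k_n}(b_n)$ using Theorem~\ref{path_counting_thm} and the correlation bound $\P(w(\p)\le b_n, w(\q)\le b_n)\le F_n^{*k_n}(b_n)F_n^{*(k_n-l)}(b_n)$ obtained by conditioning on the shared-edge weight and using its nonnegativity. The only cosmetic differences are that you invoke Paley--Zygmund on $\E[N^2]/(\E[N])^2$ where the paper uses Chebyshev on $\mathrm{Var}(N)/(\E[N])^2$, and you bound the $l=0$ term explicitly by $(\E[N])^2$ whereas the paper simply drops it from the variance sum since paths sharing no edges have zero covariance.
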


\begin{proof}
When $W_n >b_n$, there is no path of weight less or equal than $b_n$. This fact and Chebychev's inequality imply that for any $k_n \in \N$
\begin{equation}\label{markov_estimate}
\P(W_n > b_n) \le \P(N_{k_n}(b_n) =0) \le \frac{\text{Var}(N_{k_n}(b_n))}{\E[N_{k_n}(b_n)]^2}.
\end{equation}
To show that the right-hand side converges to zero for the choice $k_n$ from the statement, we estimate the moments of $N_{k_n}(b_n)$. \refthm{path_counting_thm} implies
\begin{equation}\label{mean_estimate}
\E[N_{k_n}(b_n)] = \sum_{\p \in \mathcal{S}_{1,2}(k_n)} \P(w(\p)\le b_n) = |\mathcal{S}_{1,2}(k_n)| F^{* k_n}_n (b_n)= (1+o(1)) n^{k_n-1}F^{*k_n}_n (b_n).
\end{equation}
For the variance, we have
\begin{align*}
\text{Var}(N_{k_n}(b_n))&=\E\Big[\big(\sum_{\p \in \S_{1,2}(k_n)} \mathbbm{1}_{w(\p)\le b_n}-\P(w(\p)\le b_n)\big)^2\Big]\\
&=
\sum_{\p,\q \in \S_{1,2}(k_n)} \big[\P(w(\p)\le b_n,w(\q)\le b_n)-\P(w(\p)\le b_n)\P(w(\q)\le b_n) \big]\\
&\le
\sum_{\p,\q \in \S_{1,2}(k_n), \p \cap \q \not= \emptyset} \P(w(\p)\le b_n, \, w(\q) \le b_n)
\\
&=
\sum_{l=1}^{k_n} \sum_{(\p,\q) \in \mathcal{P}(k_n,l)} \P(w(\p)\le b_n, \, w(\q) \le b_n).
\end{align*}
We decompose the weights $w(\p)$ and $w(\q)$ into the part due to the common edges and the part originating from distinct subpaths. When $(Y_{i})_{i\in \N}, (Y_{i}')_{i\in \N}$, $(Y_i'')_{i\in \N}$ are independent random variables and $Y_{i}$, $Y_i'$ and $Y_{i}''$ have distribution function $F^{*i}_n$, \refthm{path_counting_thm} and nonnegativity of $Y_{l}$ imply
\begin{align*}
\text{Var}&(N_{k_n}(b_n))\\
&\le
 |\mathcal{S}_{1,2}(k_n)|F^{*k_n}_n(b_n)+(1+o(1)) \sum_{l=1}^{k_n-2} (l+1)n^{2k_n-l-2} \P(Y_{l}+Y_{k_n-l}'\le b_n, Y_{l}+Y_{k_n-l}''\le b_n)\\
&\le |\mathcal{S}_{1,2}(k_n)|F^{*k_n}_n (b_n)+(1+o(1)) \sum_{l=1}^{k_n-2} (l+1)n^{2k_n-l-2} F^{*k_n}_n (b_n)F^{*(k_n-l)}_n(b_n).
\end{align*}
Combining the last estimate with (\ref{markov_estimate}) and (\ref{mean_estimate}), we obtain
\[
\P(W_n > b_n)\le \E[N_{k_n}(b_n)]^{-1}+ (1+o(1)) \sum_{l=1}^{k_n-2} (l+1)n^{-l} \frac{F^{*(k_n-l)}_n (b_n)}{F^{*k_n}_n (b_n)}=o(1).\qedhere
\]
\end{proof}

Notice that $n^{-l}$ is the dominating term in the variance estimate and for $l=1$ we neglect only one summand. Therefore, we expect the approximation of the variance to be quite reliable. The right choice of sequence $k_n$ is crucial to make the method work. Intuitively, the sequence $k_n$ may be thought of as an approximation of the hopcount. Good guesses can be obtained using the heuristics explained in \refsect{discussion}.

\section{\texorpdfstring{The distribution of $w(\p)$}{Distribution of w(p)}}
In this section estimates for the distribution function of the weight of a path $\p \in S_{1,2}(k)$ are derived. These are needed for the application of the methods developed in \refsect{fst_snd_mm_sec}.

\subsection{Estimates for the distribution function}\label{gen_df_sec}
To apply the derived method, we rely on estimates on the distribution function of $w(\p)$ for fixed $n$. Hence for this section, let $F$ be a distribution function which is concentrated on $(0,\infty)$ and absolutely continuous with Lebesgue density $f$. For $(X_i)_{i \in \mathbb{N}}$ i.i.d.\ with distribution function $F$, we write for every $k\in \mathbb{N}$
\[
F^{*k}(x):= \P\left(\sum_{i=1}^k X_i \le x\right), \qquad x \in \mathbb{R}.
\]
\begin{proposition}[{\bf{Rough bounds on $F^{*k}$}}]\label{gen_rough_prop}
For all $k\in \mathbb{N}$ and $x\in [0, \infty)$,
\[
F(x/k)^k \le F^{*k}(x)\le \frac{x^k}{k!} \max_{\sum_{i=1}^k x_i \le x} \left(\prod_{i=1}^k f(x_i) \right).
\]
\end{proposition}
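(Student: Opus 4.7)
The plan is to handle the two bounds separately, each by an elementary argument that does not require any of the earlier technology in the paper.

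For the lower bound, I would note that the event $\{X_i \le x/k \text{ for all } i \in [k]\}$ is contained in $\{\sum_{i=1}^k X_i \le x\}$, so independence of the $X_i$ immediately gives
\begin{equation*}
F^{*k}(x) \ge \P\bigl(X_1 \le x/k, \dots, X_k \le x/k\bigr) = F(x/k)^k.
\end{equation*}

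For the upper bound, the idea is to express the convolution as an integral over the simplex and then pull out the maximum of the density. Concretely, since $F$ has density $f$ and is supported on $(0,\infty)$,
\begin{equation*}
F^{*k}(x) = \int_{\Delta_k(x)} \prod_{i=1}^k f(x_i)\, dx_1 \cdots dx_k,
\end{equation*}
where $\Delta_k(x) = \{(x_1,\dots,x_k) \in [0,\infty)^k : \sum_i x_i \le x\}$. Bounding the integrand uniformly on $\Delta_k(x)$ by its maximum and using that the Lebesgue volume of $\Delta_k(x)$ equals $x^k/k!$ (a standard computation, e.g.\ by induction on $k$) gives
\begin{equation*}
F^{*k}(x) \le \max_{(x_1,\dots,x_k) \in \Delta_k(x)} \prod_{i=1}^k f(x_i) \cdot \mathrm{Vol}(\Delta_k(x)) = \frac{x^k}{k!} \max_{\sum_i x_i \le x} \prod_{i=1}^k f(x_i).
\end{equation*}

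There is essentially no obstacle; the only point that requires a line of justification is the volume $x^k/k!$ of the simplex, which follows by a one-step scaling/induction argument. Both inequalities are tight in natural limiting regimes (the lower bound when $f$ is concentrated near $0$, the upper bound when $f$ is nearly constant on $[0,x]$), which is consistent with their later use in controlling $F_n^{*k}$ in the regimes treated in the paper.
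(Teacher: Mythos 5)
Your proof is correct and follows essentially the same two-step argument as the paper: for the lower bound, event inclusion plus independence; for the upper bound, writing the convolution as an integral over the simplex, bounding the density by its maximum, and using that the simplex has volume $x^k/k!$.
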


\begin{proof}
To derive the lower bound, we estimate for all $k \in \mathbb{N}$ and $x \in [0,\infty)$
\[
F^{*k}(x)=\P\left(\sum_{i=1}^k X_i \le x\right) \ge \P\left(\cap_{i=1}^k \{X_i \le x/k\}\right) = F(x/k)^k.
\]
The upper bound is obtained by
\begin{align*}
F^{*k}(x)&=\P\left(\sum_{i=1}^k X_i \le x\right)= \int_{\mathbb{R}^k} \mathbbm{1}_{\sum_{i=1}^k y_i \le x} \prod_{i=1}^k f(y_i)\, d(y_1,\dotsc,y_k)\\
&\le \max_{\sum_{i=1}^k x_i \le x } \left( \prod_{i=1}^k f(x_i) \right) \int_{[0,\infty)^k} \mathbbm{1}_{\sum_{i=1}^k y_i \le x}\, d(y_1,\dotsc,y_k) \\
&= \max_{\sum_{i=1}^k x_i \le x} \left( \prod_{i=1}^k f(x_i) \right) \frac{x^k}{k!}.\qedhere
\end{align*}
\end{proof}

To compute the value of the maximum in the upper bound, the following lemma is useful for many distributions. For a differentiable function $h$ we call $x$ stationary, if $x$ is in the domain of $h$ and $h'(x)=0$.

\begin{lemma}\label{strict_convex_lem}
Let $k \in \N$, $\bar{x}\in (0,\infty]$ and $h\in C^1((0,\bar{x}))$ strictly convex with $\lim_{x \downarrow 0}h(x)=\infty$. Let $x^*\in (0,\bar{x})$ be the unique stationary point of $h$ if such a point exists or $x^*=\bar{x}$ otherwise.\\
If $f(x)= \exp(-h(x))$ for all $x\in (0,\bar{x})$ and $f(x)=0$ for all $x\le 0$, then
\[
\max_{\sum_{i=1}^k x_i \le x} \left( \prod_{i=1}^k f(x_i) \right)= f(x/k)^k
\]
for all $x \le kx^*$ with $0<x<\bar{x}$.
\end{lemma}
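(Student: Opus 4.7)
\textbf{Proof plan for Lemma \ref{strict_convex_lem}.}

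Since $f$ vanishes on $(-\infty,0]$, the product $\prod_{i=1}^k f(x_i)$ is nonzero only when every $x_i\in(0,\bar x)$, so I would restrict the optimisation to that open region. Taking logarithms, the claim becomes the dual minimisation
\[
\min\Bigl\{\,\sum_{i=1}^k h(x_i) \;:\; x_i\in(0,\bar x),\ \sum_{i=1}^k x_i\le x\Bigr\} \;=\; k\,h(x/k),
\]
which is to be established for $0<x\le kx^*$ (noting that $x/k\le x^*<\bar x$ in the case of an interior stationary point, and $x/k\le x<\bar x$ in the case $x^*=\bar x$, so the right-hand side is well-defined).

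The first main step is Jensen's inequality: since $h$ is strictly convex on $(0,\bar x)$, for any feasible tuple $(x_1,\dots,x_k)$ with mean $\mu:=\frac1k\sum_i x_i\in(0,x/k]$,
\[
\sum_{i=1}^k h(x_i)\;\ge\; k\,h(\mu).
\]
The second step is to pass from $\mu$ to $x/k$ via monotonicity. I would argue that $h$ is strictly decreasing on $(0,x^*)$: indeed, $h'$ is strictly increasing by strict convexity, and $\lim_{y\downarrow 0}h(y)=\infty$ together with $h\in C^1$ forces $h'<0$ near $0$; in the interior-stationary-point case $h'(x^*)=0$ pins down the transition, while if no stationary point exists then $h'<0$ throughout $(0,\bar x)=(0,x^*)$. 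Combined with $\mu\le x/k\le x^*$, this yields $h(\mu)\ge h(x/k)$, and chaining the two inequalities gives $\sum_i h(x_i)\ge k\,h(x/k)$.

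Finally, equality is attained by the diagonal choice $x_1=\cdots=x_k=x/k$, which is feasible because $x/k\in(0,\bar x)$. Exponentiating the inequality $\sum h(x_i)\ge kh(x/k)$ therefore establishes $\prod_i f(x_i)\le f(x/k)^k$ with the bound attained, which is the claim. No step is a genuine obstacle here; the only point that requires a little care is the monotonicity argument in the two sub-cases (with or without interior stationary point), where the hypothesis $\lim_{x\downarrow 0}h(x)=\infty$ is what prevents $h'$ from being positive near zero and thus secures $h$ decreasing on all of $(0,x^*)$.
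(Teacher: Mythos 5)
Your proof is correct, and it takes a genuinely different route from the paper. The paper treats this as a constrained convex optimisation problem and invokes the KKT conditions (citing Boyd--Vandenberghe): strict convexity makes $h'$ injective, forcing all coordinates equal, and then the complementary slackness condition distinguishes the case $kx^*\le x$ (constraint slack, minimiser $x_i=x^*$) from $kx^*>x$ (constraint tight, minimiser $x_i=x/k$). Your argument instead chains Jensen's inequality, $\sum_i h(x_i)\ge k\,h(\mu)$ with $\mu=\frac1k\sum_i x_i$, with the observation that $h$ is strictly decreasing on $(0,x^*)$, so $h(\mu)\ge h(x/k)$ whenever $\mu\le x/k\le x^*$. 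This is shorter and avoids any appeal to Lagrangian duality; what you lose is the extra information the KKT analysis gives for free about the regime $x>kx^*$, but that regime is not part of the lemma's claim, so nothing is lost here. One small point of rigour: the phrase ``$\lim_{y\downarrow 0}h(y)=\infty$ together with $h\in C^1$ forces $h'<0$ near $0$'' is a little compressed — the limit condition alone does not control the sign of $h'$; what makes it work is that $h'$ is \emph{strictly increasing}, so if $h'\ge 0$ anywhere on $(0,\delta)$ then $h$ is eventually bounded near $0$, contradicting the blow-up. You do invoke strict convexity in the same sentence, so the idea is there, but it would be worth spelling out that the monotonicity of $h'$ is what closes the argument in the no-stationary-point case (ruling out $h'>0$ everywhere), while in the interior-stationary-point case $h'(x^*)=0$ and strict monotonicity of $h'$ alone already give $h'<0$ on $(0,x^*)$ without needing the blow-up condition at all.
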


\begin{proof} First, notice that the optimization problem is solvable since the feasible region is compact and the involved functions are sufficiently smooth. Because $x < \bar{x}$, the constraint $\sum_{i=1}^kx_i\le x$ implies $x_i< \bar{x}$ and we obtain
\[
\max_{\sum_{i=1}^k x_i \le x} \left( \prod_{i=1}^k f(x_i) \right)= \exp\big(-\min_{\sum_{i=1}^k x_i \le x} \sum_{i=1}^k h(x_i)\big).
\]
The convexity of $h$ implies convexity of $(x_1,\dotsc,x_k) \mapsto \sum_{i=1}^k h(x_i)$. Since the condition $\sum_{i=1}^k x_i \le x$ is linear, the optimization problem is convex. A necessary and sufficient condition for a solution are the KKT-conditions (see for example \cite[Section 5.5.3]{BV04}), i.e.\ the existence of $\lambda \in [0,\infty)$ such that
\[
h'(x_i)=-\lambda \le 0 \qquad \forall \, i \in [k], \qquad \sum_{i=1}^k x_i \le x, \qquad \lambda\big(\sum_{i=1}^k x_i - x\big)=0.
\]
Strict convexity of $h$ implies that $h'$ is one-to-one. Hence, $x_1=x_i$ for all $i \in [k]$ whenever $(x_1,\dotsc,x_k)$ is a solution. If $h$ has a stationary point $x^*\in [0,\bar{x}]$, then this point is unique and positive since $\lim_{x\downarrow 0}h(x)=\infty$. In case of a stationary point with $kx^*\le x$ the solution is given by $x_i=x^*$ for all $i\in [k]$. If $kx^*> x$ or if there is no stationary point, then $\lambda\not=0$ and $\sum_{i=1}^k x_i = x$ implies $x_i=x/k$, $i=1,\dotsc,k$.
\end{proof}

\subsection{\texorpdfstring{Distribution of $w(\p)$ for the $X_e\stackrel{d}{=}Z^{s_n}$ class}{Distribution of w(p) for the X=Zsn class}}\label{sn_df_sec}

In this section, we present estimates for the distribution function $F^{*k}$ in the case $X_1\overset{d}{=} Z^{s}$, where $s\in (0,1)$. Here $Z$ denotes a positive random variable with distribution function $G$. Write $p:=1/s$.

We start with the rough estimates from \refsect{gen_df_sec} before we derive the precise scaling for $s=s_n \to 0$ and moderate $k$. The results of this section will enable us to present the full scope of the techniques explained in the previous sections.

\begin{lemma}\label{rough_sn_lem}
Assume there exists a $\hat{x}\in (0,\infty)$ such that $G\in C^1([0,\hat{x}])$ and $G'(0+)>0$. Then there exists a constant $c>0$ such that, for all $p=1/s>1$, $k\in \mathbb{N}$ and $x^p\in [0, 1 \land \hat{x}]$,
\[
G\big((x/k)^p\big)^k \le F^{*k}(x)\le (cp)^k \left(\frac{x}{k}\right)^{kp}.
\]
\end{lemma}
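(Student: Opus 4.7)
The plan is to derive both bounds directly from Proposition~\ref{gen_rough_prop} applied to the distribution function of $X_1 = Z^s$, namely $F(y) = \P(Z^s \le y) = G(y^p)$. The lower bound is immediate: Proposition~\ref{gen_rough_prop} gives $F^{*k}(x) \ge F(x/k)^k = G((x/k)^p)^k$, which is exactly the claimed inequality.

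For the upper bound, first compute the density of $Z^s$ on $\{y : y^p < \hat{x}\}$ via the chain rule: $f(y) = p\, y^{p-1} G'(y^p)$. Since $G \in C^1([0,\hat{x}])$ and $[0,\hat{x}]$ is compact, the quantity $M := \sup_{[0,\hat{x}]} G'$ is finite, and I will define $c := Me$. The hypothesis $x^p \le 1 \wedge \hat{x}$ together with the constraint $\sum x_i \le x$ forces each $x_i \le x \le 1$ and $x_i^p \le \hat{x}$, so the pointwise bound $f(x_i) \le pM\, x_i^{p-1}$ is valid throughout the feasible region. Hence
\[
\max_{\sum x_i \le x} \prod_{i=1}^k f(x_i) \le (pM)^k \max_{\sum x_i \le x} \prod_{i=1}^k x_i^{p-1}.
\]

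Because $p > 1$, the function $h(y) = -(p-1)\log y$ is strictly convex on $(0,\infty)$ with $\lim_{y\downarrow 0}h(y)=\infty$ and no stationary point, so Lemma~\ref{strict_convex_lem} (applied with $\bar{x}=\infty$) — or a direct AM--GM — pins the maximizer at $x_i = x/k$ and gives $\prod x_i^{p-1} \le (x/k)^{k(p-1)}$. Substituting into the upper bound of Proposition~\ref{gen_rough_prop} yields
\[
F^{*k}(x) \le \frac{x^k}{k!}(pM)^k\Bigl(\frac{x}{k}\Bigr)^{k(p-1)} = \frac{(pM)^k k^k}{k!}\Bigl(\frac{x}{k}\Bigr)^{kp},
\]
and Stirling's inequality $k^k \le e^k k!$ absorbs the combinatorial factor, producing $F^{*k}(x) \le (pMe)^k(x/k)^{kp} = (cp)^k(x/k)^{kp}$ with $c$ depending only on $G$.

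No step presents a real obstacle. The only points worth flagging are (i) checking that the assumption $x^p \le 1\wedge\hat{x}$ keeps every coordinate of the maximization inside $[0,\hat{x}^{1/p}]$, which is what allows the density to be dominated by the explicit polynomial $pM\,y^{p-1}$, and (ii) the Stirling absorption of $k^k/k!$ into the universal constant $c = Me$. Once these are noted, the lemma follows by a one-line application of Proposition~\ref{gen_rough_prop} in each direction.
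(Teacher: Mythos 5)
Your proof is correct and follows essentially the same route as the paper: compute the density $f(y)=p\,y^{p-1}G'(y^p)$, dominate $G'$ by its supremum on $[0,\hat{x}]$ over the feasible region, invoke Lemma~\ref{strict_convex_lem} (or AM--GM) with $h(y)=-(p-1)\log y$ to place the maximizer at $x/k$, and absorb $k^k/k!$ via Stirling into the constant $c=Me$. The only improvement over the paper's write-up is your explicit check that $x^p\le 1\wedge\hat{x}$ forces $x_i^p\le\hat{x}$ for each coordinate, which cleanly justifies the density bound.
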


\begin{proof}
We have $F(x)=\P(Z^{s}\le x)=G(x^{p})$. Thus, \refprop{gen_rough_prop} yields the lower bound. For the upper bound, we compute the Lebesgue density
\[
f(x)=\frac{dF}{dx}(x)= px^{p-1}G'(x^p).
\]
Denote $d=\max\{G'(x): x \le 1 \land \hat{x}\}$. Then $g\in (0,\infty)$ by the assumptions on $G$ and $f(x) \le dp x^{p-1}$ for all $x \le 1 \land \hat{x}$. \refprop{gen_rough_prop}, an application of \reflemma{strict_convex_lem} to $h(x):= -(p-1)\log x$ with $\bar{x}=x^*=\infty$, and Stirling's estimates yield for $x \in [0, 1 \land \hat{x}]$
\[
F^{*k}(x)\le \frac{x^k}{k!}(dp)^k \Big(\frac{x}{k}\Big)^{(p-1)k} \le (edp)^k \Big(\frac{x}{k}\Big)^{kp}.\qedhere
\]
\end{proof}

The simple structure of the distribution function in this example allows us to give a very precise estimate how the distribution function behaves close to zero. We denote $p_n:=1/s_n$ for all $n \in \N$.

\begin{lemma}\label{asymp_sn_lem}
Let $\hat{x} \in (0,\infty)$ such that $G \in C^2([0,\hat{x}])$, $G'(0+)=1$. Let $X_e\overset{d}{=}Z^{s_n}$ with $s_n\to 0$. If $(k_n)_{n \in \N}\in \N^{\N}$ and $(b_n)_{n \in \N}\in (0,\infty)^{\N}$ satisfy $k_n=o(1/s_n)$ and $b_n=o(1)$, then
\[
F_n^{*k}(b_n)=\left(\frac{b_n}{k}\right)^{kp_n} p_n^{(k-1)/2} a_k (1+o(1)),
\]
where the error is uniform in $k \in [k_n]$ and $a_k$ is defined in (\ref{defak}).
\end{lemma}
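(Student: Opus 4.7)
The plan is to express $F_n^{*k}(b_n)$ as a Dirichlet-type integral on the simplex, show that the factor coming from $G'$ is a $1+o(1)$ multiplicative perturbation uniformly on the relevant region, evaluate the resulting beta integral in closed form, and finally extract the asymptotic using Stirling's formula. The nontrivial point is keeping every error term uniform in $k \in [k_n]$, which is why the assumption $k_n = o(1/s_n)$ enters the argument.

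\textbf{Step 1: integral representation and control of $G'$.} Since $X_e \overset{d}{=} Z^{s_n}$, the distribution function is $F_n(x) = G(x^{p_n})$ on $[0,\hat x^{s_n}]$, with density $f_n(x) = p_n x^{p_n-1} G'(x^{p_n})$. For $b_n$ eventually smaller than $\hat x$,
\[
F_n^{*k}(b_n) = p_n^k \int_{\Delta_k(b_n)} \prod_{i=1}^k x_i^{p_n-1} G'(x_i^{p_n}) \, dx,
\qquad
\Delta_k(b) := \Bigl\{x \in [0,\infty)^k : \textstyle\sum_i x_i \le b\Bigr\}.
\]
Because $G \in C^2([0,\hat x])$ and $G'(0+) = 1$, there is $C>0$ with $|G'(y)-1| \le Cy$ for $y \in [0,\hat x]$. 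On $\Delta_k(b_n)$ each $x_i \le b_n$, so $x_i^{p_n} \le b_n^{p_n}$, and consequently
\[
\Bigl|\textstyle\prod_i G'(x_i^{p_n}) - 1\Bigr| \le \exp\bigl(C k\, b_n^{p_n}\bigr) - 1.
\]
Since $b_n \to 0$ and $k \le k_n = o(p_n)$, we have $k\, b_n^{p_n} \le p_n b_n^{p_n} \to 0$, so $\prod_i G'(x_i^{p_n}) = 1+o(1)$ uniformly on $\Delta_k(b_n)$ and uniformly in $k \in [k_n]$.

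\textbf{Step 2: Dirichlet integral.} The standard evaluation
\[
\int_{\Delta_k(b_n)} \prod_{i=1}^k x_i^{p_n-1}\,dx = b_n^{k p_n}\,\frac{\Gamma(p_n)^k}{\Gamma(1+k p_n)}
\]
combined with Step~1 and the identity $p_n \Gamma(p_n) = \Gamma(1+p_n)$ yields
\[
F_n^{*k}(b_n) = b_n^{k p_n}\,\frac{\Gamma(1+p_n)^k}{\Gamma(1+k p_n)}\,(1+o(1)),
\]
with the $o(1)$ uniform in $k \in [k_n]$.

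\textbf{Step 3: Stirling.} Using $\Gamma(1+y) = \sqrt{2\pi y}\,(y/e)^y (1 + O(1/y))$ for $y \ge p_n \to \infty$, the error in each $\Gamma$ factor is $O(1/p_n)$. Raising the numerator to the $k$-th power gives $(1+O(1/p_n))^k \le \exp(C k/p_n) = 1 + O(k s_n)$, which is $o(1)$ uniformly on $[k_n]$ precisely because $k_n s_n \to 0$. Then a direct computation,
\[
\frac{\Gamma(1+p_n)^k}{\Gamma(1+k p_n)} = \frac{(2\pi p_n)^{k/2}\,(p_n/e)^{k p_n}}{\sqrt{2\pi k p_n}\,(k p_n/e)^{k p_n}}\,(1+o(1)) = k^{-k p_n}\, p_n^{(k-1)/2}\, a_k\,(1+o(1)),
\]
where we used $\tfrac{(2\pi p_n)^{k/2}}{\sqrt{2\pi k p_n}} = p_n^{(k-1)/2} a_k$ with $a_k$ as in \eqref{defak}. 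Plugging this into Step~2 gives
\[
F_n^{*k}(b_n) = \Bigl(\tfrac{b_n}{k}\Bigr)^{k p_n} p_n^{(k-1)/2} a_k\,(1+o(1))
\]
uniformly in $k \in [k_n]$, as claimed.

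\textbf{Main obstacle.} The delicate point is uniformity in $k$. The hypothesis $k_n = o(1/s_n)$ is used twice: once in Step~1 to ensure the $G'$-perturbation is negligible even when $k$ is as large as $k_n$, and once in Step~3 to control $(1+O(1/p_n))^k$ in the Stirling expansion. Without this restriction the error in the Stirling step alone would be uncontrolled, and without the smoothness $G \in C^2$ and $G'(0+) = 1$ the linearization of $G'$ in Step~1 would fail.
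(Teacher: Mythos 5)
Your proof is correct, and it takes a genuinely different route from the paper. The paper establishes the one-dimensional identity $F^{*k}(x)=x^{kp}\frac{\Gamma(p+1)^k}{\Gamma(kp+1)}(1+r_k(p,x)x^p)$ by induction on $k$, writing $F^{*(k+1)}(x)=\int_0^x F^{*k}(x-y)\,F(dy)$, expanding with the Beta function, and tracking an explicit recursive error bound $\bar r_k(p)$; Stirling's formula (displayed as \eqref{stirlingForFk}) then converts the Gamma quotient into $k^{-kp}p^{(k-1)/2}a_k$. You instead represent $F_n^{*k}(b_n)$ directly as the $k$-dimensional integral $p_n^k\int_{\Delta_k(b_n)}\prod_i x_i^{p_n-1}G'(x_i^{p_n})\,dx$, observe that on the simplex each $x_i^{p_n}\le b_n^{p_n}$, so $C^2$-regularity gives $|G'(x_i^{p_n})-1|\le Cb_n^{p_n}$ pointwise and hence $\prod_i G'(x_i^{p_n})=1+o(1)$ uniformly (since $k\,b_n^{p_n}\le p_n b_n^{p_n}\to 0$), and then evaluate the resulting Dirichlet/Liouville integral in closed form and apply Stirling. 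This bypasses the induction entirely and is arguably cleaner. What the paper's more laborious inductive argument buys is the exact formula \eqref{detailed_df_sn} with its explicit, $x$-dependent error bound $\bar r_k(p)x^p$, which is reused later in the proof of Theorem~\ref{very_small_2nd_order}(c),(d) where a finer-than-$(1+o(1))$ control is convenient; for Lemma~\ref{asymp_sn_lem} as stated, your direct approach is entirely sufficient. One small slip in your write-up: the domain condition should be $b_n<\hat x^{s_n}$ rather than $b_n<\hat x$, but since $b_n=o(1)$ this gives you $b_n^{p_n}\le b_n\le \hat x\wedge 1$ for $n$ large, so the argument is unaffected.
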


\begin{proof}
By Stirling's formula,
\begin{equation} \label{stirlingForFk}
\frac{\Gamma(p_n+1)^k}{\Gamma(kp_n +1)}=\frac{1}{k^{kp_n}}\frac{(2 \pi)^{k/2}}{\sqrt{2 \pi k}} p_n^{(k-1)/2} (1+O(k/p_n)) \qquad \text{for }n\to \infty.
\end{equation}
Since $|k/p_n| \le k_n/p_n=o(1)$, the error bound holds uniformly in $k\in [k_n]$. Hence, it is sufficient to prove that there exists $p_0 >0$ and $x_0>0$ such that for all $k\in \N$,
\begin{equation}\label{detailed_df_sn}
F^{*k}(x)=x^{kp} \frac{\Gamma(p+1)^k}{\Gamma(kp+1)} (1+r_k(p,x) x^p) \qquad \forall p \ge p_0, x \in [0,x_0],
\end{equation}
where $|r_k(p,x)| \le \bar r_k(p):=4^p (\frac{k^k}{(k+1)^{k+1}})^p 2^{k}D$ for all $x \in [0,x_0]$ and $D=\max\{|G''(x)|: x \in [0,1 \land \hat{x}]\}<\infty$. Indeed, one easily checks that $k_n=o(p_n)$ implies that $\bar r_k(p_n)$ is decreasing in $k\in [k_n]$ for sufficiently large $n$. The error is thus bounded by $\bar r_1(p_n)b_n^{p_n}=2Db_n^{p_n}$.

Note that if $Z$ is uniform on $(0,1)$, so that $F(x)=x$ for $x\leq x_0=1$, then (\ref{detailed_df_sn}) holds with no error term. For the general case, we will prove (\ref{detailed_df_sn}) by induction on $k$ by suitably controlling the error terms.

Let $x_0=\hat{x}\land 1$. By a Taylor expansion, there exist functions $c_p$ and $d_p$ such that, for all $x \le x_0$,
\[
\begin{cases}
G(x^p)=x^p+\tfrac{1}{2}c_p(x)x^{2p},\\
G'(x^p)=1+d_p(x)x^p, \end{cases} \qquad \text{with }|c_p(x)|,|d_p(x)|\le D.
\]
In particular,
\[
F(x)=G(x^p)=x^p(1+\tfrac{1}{2}c_p(z)x^{p}),
\]
and $|\frac{1}{2}c_p(x)|\le \frac{1}{2}D\le 2D=\bar r_1(p)$. The choice of $p_0$ will be explained in the induction step; for the base it is not needed.

Assume that the statement is proven for $F^{*l}$ with $l\le k$. Using the independence assumption, we obtain for $x \le x_0$
\begin{align*}
&F^{*(k+1)}(x)=\int_0^x F^{*k}(x-y) \,F(dy) = \int_0^1 F^{*k}(x(1-y))  F'(xy) x \; dy \\
&\quad= \int_0^1
\Big[
(x(1-y))^{kp} \frac{\Gamma(p+1)^k}{\Gamma(kp+1)} \Big(1+r_k(p,x(1-y))(x(1-y))^p\Big) p(xy)^{p-1} \big(1+d_p(xy)(xy)^p\big) x \Big]
\,dy \\
&\quad=\frac{\Gamma(p+1)^k}{\Gamma(kp+1)}p x^{(k+1)p}\int_0^1
\Big[
(1-y)^{kp}y^{p-1} \big(1+r_k(p,x(1-y))(x(1-y))^p\big)
\big(1+d_p(xy)(xy)^p\big)
\Big] \,
dy.
\end{align*}
Expanding the brackets, the integral without any error term equals $B(kp+1,p)$, where $B(\cdot,\cdot)$ denotes the beta function. The error term can be estimated by
\[
x^p\bar r_k(p) B((k+1)p+1,p)+x^pDB(kp+1,2p)+x^{2p}D\bar r_k(p) B((k+1)p+1,2p).
\]
Hence, it remains to prove that, for all $p\ge p_0$ and $x\in [0,x_0]$, the quantity
\begin{equation}\label{restterm_df}
\bar r_k(p)\frac{B((k+1)p+1,p)}{B(kp+1,p)}+D\frac{B(kp+1,2p)}{B(kp+1,p)}+x^pD \,\bar r_k(p)\frac{B((k+1)p+1,2p)}{B(kp+1,p)}
\end{equation}
is less than or equal to $\bar r_{k+1}(p)$. By Stirling's formula, for all $m_1,m_2\in \N$
\[
B(m_1p+1,m_2p) =\sqrt{\frac{2\pi}{p}}\left(\frac{m_1^{m_1}m_2^{m_2}}{(m_1+m_2)^{m_1+m_2}}\right)^p\sqrt{\frac{m_1}{m_2(m_1+m_2)}}(1+O(1/p)),
\]
where the error term is uniform in $m_1,m_2 \in \N$. Hence,
\begin{align*}
\frac{B((k+1)p+1,p)}{B(kp+1,p)}&=(1+O(1/p))\frac{k+1}{\sqrt{k(k+2)}}\frac{1}{2}\frac{\bar r_{k+1}(p)}{\bar r_k(p)}\\
\frac{B(kp+1,2p)}{B(kp+1,p)}&= (1+O(1/p)) \frac{\sqrt{k+1}}{\sqrt{2(k+2)}}\frac{1}{2^{k+1}D} \bar r_{k+1}(p)\\
\frac{B((k+1)p+1,2p)}{B(kp+1,p)}&= (1+O(1/p)) \Big(\frac{(k+2)^{k+2}}{(k+3)^{k+3}}\Big)^p\frac{k+1}{\sqrt{2k(k+3)}} \frac{4^p}{2} \frac{\bar r_{k+1}(p)}{\bar r_k(p)}.
\end{align*}
Choose $p_0$ such that $1+O(1/p) \le \frac{3}{2\sqrt{2}}$ for all $p\ge p_0$ and decrease $x_0$ such that $Dx^p \le 1/12$ for all $p \ge p_0$ and $x \le x_0$. Then the expression in (\ref{restterm_df}) can be estimated by
\begin{align*}
\tfrac{3}{2\sqrt{2}}(\tfrac{k+1}{\sqrt{k(k+2)}}\tfrac{1}{2}+\tfrac{\sqrt{k+1}}{\sqrt{2(k+2)}}\tfrac{1}{2^{k+1}}+\tfrac{1}{12} \tfrac{1}{4^{4p}} \tfrac{k+1}{\sqrt{2k(k+3)}}\tfrac{4^p}{2}&)\bar r_{k+1}(p)\\
&\le \tfrac{3}{2\sqrt{2}}(\tfrac{1}{\sqrt{2}}+\tfrac{1}{\sqrt{2}2^2}+\tfrac{1}{12} \tfrac{1}{\sqrt{2}})\bar r_{k+1}(p) =\bar r_{k+1}(p).\qedhere
\end{align*}
\end{proof}

\section{\texorpdfstring{Proofs for $X_e\stackrel{d}{=}Z^{s_n}$}{Proofs for X=Zsn}}\label{main_proofs_sec}

We now turn to the proofs of results for the network with edge weight distribution $X_e\overset{d}{=}Z^{s_n}$, where $Z$ denotes a positive random variable with distribution function $G$. In Sections~\ref{weight_opt_path_ss} and \ref{hop_sec} 
we assume that $G\in C^2([0,\hat{x}])$ for some $\hat{x}\in (0,\infty)$ and $\lambda=G'(0+)>0$. For the proofs in the very small $s_n$ regime, which are collected in \refsubsect{very_small_sn}, 
we will also work under slightly weaker conditions.

\subsection{The weight of the optimal path}\label{weight_opt_path_ss}
The statements (\ref{weight_bounds_eq}) and (\ref{Wn_in_prob_eq}) of \refthm{weight_thm} will follow from the upper and lower bounds stated in Theorems \ref{upper_sn_thm} and \ref{lower_s_thm}.

\begin{theorem}\label{upper_sn_thm}{\bf{(Upper bound for $W_n$ in the $Z^{s_n}$ case)}} Let $X_e\overset{d}{=}Z^{s_n}$ with $s_n\log n\to \infty$ and $s_n^2\log n\to 0$ for $n\to \infty$. Then
\[
W_n \le e \eta(s_n \log n) u_n \qquad \text{with high probability},
\]
where $\eta(x)=\min\{\lceil x\rceil, e^{\frac{x}{\lfloor x \rfloor }-1}\lfloor x \rfloor \}$ for $x \ge 1$. (Note that $\eta(x) \in [\lfloor x \rfloor, \lceil x \rceil]$.)
\end{theorem}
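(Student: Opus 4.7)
The plan is to apply \refprop{gen_upper_prop} twice, with two different choices of $(k_n,b_n)$, one for each candidate in the minimum defining $\eta(s_n\log n)$, and then combine the resulting bounds by a union bound. Writing $p_n:=1/s_n$, the two choices are: (i) $k_n=\lceil s_n\log n\rceil$ and $b_n=ek_nu_n$, targeting $W_n\le e\lceil s_n\log n\rceil u_n$; and (ii) $k_n=\lfloor s_n\log n\rfloor$ and $b_n=e\cdot e^{(s_n\log n)/k_n-1}k_nu_n$, targeting $W_n\le e\cdot e^{(s_n\log n)/\lfloor s_n\log n\rfloor-1}\lfloor s_n\log n\rfloor u_n$. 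In both cases $b_n=C_nk_nu_n$ with $\log C_n\ge 1$, and the exponent is calibrated so that $C_n^{k_np_n}\ge n$.

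I would first verify the side conditions of \refprop{gen_upper_prop} and \reflemma{asymp_sn_lem}. Since $s_n^2\log n\to 0$ we have $s_nk_n\to 0$, so $k_n=o(p_n)$; moreover $k_n=O(s_n\log n)=o(\sqrt{\log n})$, hence trivially $k_n=o(n^{1/4})$. By (\ref{unscaling}), $u_n\to 0$ because $s_n\log n\to\infty$, so $b_n\to 0$ and \reflemma{asymp_sn_lem} applies uniformly in $k\in[k_n]$. Combined with $u_n^{p_n}=G^{-1}(1/n)\sim 1/(\lambda n)$, this gives
\[
\E[N_{k_n}(b_n)]=(1+o(1))\,n^{-1}\,C_n^{k_np_n}\,\lambda^{-k_n}\,p_n^{(k_n-1)/2}\,a_{k_n}.
\]
In case (i), $k_np_n=\lceil s_n\log n\rceil/s_n\ge\log n$, so $C_n^{k_np_n}=e^{k_np_n}\ge n$; in case (ii), $k_np_n\log C_n=\log n$ exactly, so $C_n^{k_np_n}=n$. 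In both cases $\E[N_{k_n}(b_n)]$ grows at least like $\lambda^{-k_n}p_n^{(k_n-1)/2}a_{k_n}\to\infty$.

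The main obstacle will be controlling the variance sum. Using \reflemma{asymp_sn_lem} and $n^{-1}u_n^{-p_n}\to\lambda$, I would write
\[
n^{-l}\,\frac{F_n^{*(k_n-l)}(b_n)}{F_n^{*k_n}(b_n)}=(1+o(1))\,\lambda^l\,p_n^{-l/2}\,\frac{a_{k_n-l}}{a_{k_n}}\,\exp\bigl(p_n g(l)\bigr),
\]
where $g(l):=(k_n-l)\log(k_n/(k_n-l))-l\log C_n$. A short computation using the substitution $x=l/k_n$ shows $g(0)=0$, $g'(0)=1-\log C_n\le 0$, and $g''(l)=-1/(k_n-l)<0$, so $g$ is concave and lies below its tangent at $0$; combining this with the elementary inequality $-(1-x)\log(1-x)\le x-x^2/2$ yields $g(l)\le -l^2/(2k_n)$ uniformly for $l\in[0,k_n)$. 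The crux is that $s_n^2\log n\to 0$ yields $p_n/(2k_n)\sim 1/(2s_n^2\log n)\to\infty$, so the Gaussian-in-$l$ factor $\exp(-p_nl^2/(2k_n))$ is dominated by its $l=1$ term $\exp(-p_n/(2k_n))$, which itself tends to zero super-exponentially; the polynomial prefactors $(l+1)\lambda^lp_n^{-l/2}a_{k_n-l}/a_{k_n}$ are harmless.

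With both hypotheses of \refprop{gen_upper_prop} verified in each case, I conclude $W_n\le b_n^{(i)}$ with high probability for $i=1,2$, and the union bound yields $W_n\le e\,\eta(s_n\log n)\,u_n$ with high probability.
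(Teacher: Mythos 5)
Your proposal is correct and takes essentially the same route as the paper: the paper likewise applies \refprop{gen_upper_prop} with $k_n\in\{\lfloor s_n\log n\rfloor,\lceil s_n\log n\rceil\}$ and $b_n=e\sigma_n k_n u_n$ (so that $\sigma_n k_n=\eta(s_n\log n)$), uses \reflemma{asymp_sn_lem} for the first moment, and controls the variance sum through the same exponent estimate — its Lemma on $\psi(q)=(1/q-1)\log(1-q)^{-1}$ is your inequality $-(1-x)\log(1-x)\le x-x^2/2$ in disguise, both giving the $\exp(-p_n l^2/(2k_n))$-type decay with $p_n/k_n\to\infty$. One cosmetic point: \reflemma{asymp_sn_lem} assumes $G'(0+)=1$, so for general $\lambda$ you should first rescale as in Remark~\ref{lambda_one_remark} (the $\lambda^{-k_n}$ and $\lambda^{l}$ factors in your formulas then disappear); since your conclusions only use crude divergence and vanishing, this does not affect the argument.
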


\begin{remark}\label{lambda_one_remark}
It suffices to prove statements about weight and hopcount in the case $\lambda=1$. In fact, the random variable $\tilde{Z}=\lambda Z$ has distribution function $\tilde{G}(x)=G(\lambda^{-1}x)$ and, therefore, satisfies the assumptions with $\tilde{G}'(0+)=1$. Moreover, the optimal path in the graph with edge weights $\tilde{X}_e=\lambda^{s_n} X_e$ agrees with the optimal path in the original network. Hence, its weight in the rescaled network is given by $\lambda^{s_n} W_n$, where $W_n$ is the optimal weight in the original graph. Since $\tilde{u}_n:=\tilde{G}^{-1}(1/n)^{s_n} =\lambda^{s_n} G^{-1}(1/n)^{s_n}=\lambda^{s_n}u_n$, our statements can indeed be derived from the $\lambda=1$ case.
\end{remark}

For the proof, we write $k_n^- = \lfloor s_n \log n\rfloor$ and $k_n^+ =\lceil s_n \log n \rceil$. By $k_n$ we denote an element of $\{k_n^-, k_n^+\}$ and
\[
\sigma_n=\begin{cases}
1& \text{if } k_n=k_n^+\\
e^{\frac{s_n \log n}{k_n}-1} &  \text{if }k_n=k_n^-.\end{cases}
\]
Hence, $\sigma_nk_n =\eta(s_n\log n)$ and $b_n= e \sigma_n k_n u_n$ is the claimed upper bound. Recall from (\ref{unscaling}) that $u_n=(\lambda n)^{-s_n} [1+O(1/n)]^{s_n}$.

\begin{proof}[Proof of \refthm{upper_sn_thm}]
We assume that $\lambda=1$. To apply \refprop{gen_upper_prop}, we first show the divergence of the expected value of $N_{k_n}(b_n)$. By assumption, $(b_n)$ is a null sequence and $k_n=o(p_n)$. Therefore (\ref{mean_estimate}), \reflemma{asymp_sn_lem}, $a_{k_n} \to \infty$ and (\ref{unscaling}) imply that
\begin{equation}\label{mean_upper_eq}
\begin{split}
\E[N_{k_n}(b_n)]&=
(1+o(1)) n^{k_n-1} \Big(\frac{b_n}{k_n}\Big)^{k_np_n} p_n^{(k_n-1)/2}a_{k_n} \ge n^{k_n-1}\big(e\sigma_n u_n\big)^{k_n p_n} p_n^{(k_n-1)/2}\\
&\ge \exp\big(-\log n+k_np_n\log(e\sigma_n [1+O(1/n)]^{s_n})+\tfrac{k_n}{4} \log p_n\big).
\end{split}
\end{equation}
By definition, $-\log n+k_np_n\log(e\sigma_n) \ge 0$. Hence, the right-hand side of (\ref{mean_upper_eq}) tends to infinity.

For the variance estimate, we apply \reflemma{asymp_sn_lem} and use that $a_k$ is increasing in $k$ to obtain for $l \in [k_n-2]$ and $n$ large
\begin{align*}
\frac{F^{*(k_n-l)}_n(b_n)}{F^{*k_n}_n(b_n)} &= (1+o(1)) \frac{\big(\frac{b_n}{k_n-l}\big)^{(k_n-l)p_n} p_n^{(k_n-l-1)/2} a_{k_n-l}}{\big(\frac{b_n}{k_n}\big)^{k_np_n} p_n^{(k_n-1)/2} a_{k_n}}\\
& \le b_n^{-lp_n} \Big(\frac{k_n^{k_n}}{(k_n-l)^{k_n-l}}\Big)^{p_n} p_n^{-l/2}
\le p_n^{-1/2} b_n^{-lp_n} \Big(\frac{k_n^{k_n}}{(k_n-l)^{k_n-l}}\Big)^{p_n}\\
&\le \sqrt{s_n} \exp(-lp_n \log b_n +k_np_n \log k_n-(k_n-l)p_n\log(k_n-l))\\
&= \sqrt{s_n} \exp(lp_n\log \tfrac{k_n}{b_n}+(k_n-l)p_n \log \tfrac{k_n}{k_n-l}).
\end{align*}
The definitions of $b_n$, $p_n$ and $\sigma_n$ combined with (\ref{unscaling}) imply that
\begin{align*}
&l [p_n \log \tfrac{k_n}{b_n}-\log n]+(k_n-l)p_n\log \tfrac{k_n}{k_n-l}=-l [p_n\log(e \sigma_n)+O(1/n)]+(k_n-l)p_n \log \tfrac{k_n}{k_n-l}\\
&\qquad\le lp_n\big[-1+(1/q_{l,n}-1)\log (1-q_{l,n})^{-1}+O(s_n/n)\big]= l p_n\big[\psi(q_{l,n})-1+O(s_n/n)\big],
\end{align*}
where we write $q_{l,n}:=l/k_n$ and $\psi(q)= (1/q-1)\log (1-q)^{-1}$. The following lemma summarizes properties of $\psi$:

\begin{lemma} \label{psi_lemma}
The mapping $\psi:(0,1) \to \mathbb{R}$ given by $\psi(q)= (1/q-1)\log (1-q)^{-1}$ can be continuously extended to $[0,1]$ by $\psi(0):=1$, $\psi(1):=0$. Moreover, $\psi$ is strictly decreasing in $[0,1]$ and $\psi(q)=1-\frac{1}{2} q+O(q^2)$ for $q\downarrow 0$.
\end{lemma}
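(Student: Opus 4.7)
The plan is to handle the three assertions separately: continuous extension, Taylor expansion at $0$, and strict monotonicity on $[0,1]$. Write $\psi(q)=-\tfrac{1-q}{q}\log(1-q)$, which is the most convenient form.

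For the continuous extension, I would use the standard series $\log(1-q)^{-1}=\sum_{j\ge 1} q^j/j$, valid for $q\in[0,1)$. Multiplying by $(1-q)/q$ gives
\[
\psi(q)=(1-q)\Bigl(1+\frac{q}{2}+\frac{q^2}{3}+\cdots\Bigr),
\]
so $\psi(q)\to 1$ as $q\downarrow 0$, confirming the extension $\psi(0):=1$. At $q=1$, since $(1-q)\log(1-q)^{-1}\to 0$ by a direct calculation (e.g.\ L'H\^opital, or substitute $u=1-q$ and use $u\log(1/u)\to 0$), we get $\psi(q)\to 0$, confirming $\psi(1):=0$. The Taylor expansion for $q\downarrow 0$ now follows by expanding the right-hand side above: the product equals $1+q(1/2-1)+q^2(1/3-1/2)+O(q^3)=1-q/2-q^2/6+O(q^3)$, which is the claimed $\psi(q)=1-q/2+O(q^2)$.

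For strict monotonicity on $(0,1)$, I would differentiate. With $f(q)=-(1-q)\log(1-q)$ so that $\psi=f/q$, one gets $f'(q)=1+\log(1-q)$, and hence
\[
q^2\,\psi'(q)=q\,f'(q)-f(q)=q+q\log(1-q)+(1-q)\log(1-q)=q+\log(1-q).
\]
The inequality $q+\log(1-q)<0$ for $q\in(0,1)$ is immediate from the series $-\log(1-q)=q+q^2/2+q^3/3+\cdots>q$. Thus $\psi'(q)<0$ on $(0,1)$, and strict monotonicity on the closed interval follows from continuity at the endpoints together with the values $\psi(0)=1$ and $\psi(1)=0$.

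None of the steps looks genuinely hard; the only place where one must be a bit careful is the boundary behaviour at $q=1$, where the factor $(1-q)/q$ vanishes while $\log(1-q)^{-1}$ diverges, and in verifying that the derivative is strictly negative (not merely nonpositive) so that monotonicity is strict. Both are handled cleanly by the elementary inequality $-\log(1-q)>q$ on $(0,1)$, which is the single analytic fact driving the whole argument.
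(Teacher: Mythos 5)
Your proposal is correct and follows essentially the same route as the paper: the expansion of $\log(1-q)^{-1}$ near $0$ for the value $\psi(0)=1$ and the asymptotics $1-\tfrac12 q+O(q^2)$, the limit $(1-q)\log(1-q)^{-1}\to 0$ at $q=1$, and the derivative identity $q^2\psi'(q)=q+\log(1-q)<0$ (the paper phrases the last inequality via $1-x<e^{-x}$, which is the same fact as your series bound $-\log(1-q)>q$).
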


\begin{proof}[Proof of \reflemma{psi_lemma}]
We Taylor expand the logarithm to deduce
\begin{align*}
\psi(q)&=-\tfrac{1}{q} (1-q) \log(1-q)=-\tfrac{1}{q} (1-q) [-q-\tfrac{1}{2}q^2 +O(q^3)]\\
&=(1-q)[1+\tfrac{1}{2}q+O(q^2)]=1-\tfrac{1}{2}q+O(q^2) \qquad \text{for } q\downarrow 0.
\end{align*}
Moreover, $\lim_{q\to 1} \psi(q)=-\lim_{q\to 1}(1-q)\log(1-q)=0$. Finally, $1-x<e^{-x}$ for all $x\not=0$ implies that
\[
\psi'(q)=\frac{1}{q^2}\big(q+\log(1-q)\big)<0 \qquad\quad \forall \,q\in (0,1).\qedhere
\]
\end{proof}

Since $\psi$ is decreasing and $\psi(q_{l,n})\le \psi(1/k_n)=1-1/(2k_n)+O(k_n^{-2})$, we can use $s_n/n=o(1/k_n)$ and $k_n/p_n=s_n k_n=o(1)$ to estimate
\begin{align*}
\sum_{l=1}^{k_n-2}(l+1)n^{-l}\frac{F^{*(k_n-l)}_n(b_n)}{F^{*k_n}_n(b_n)}&\le \sqrt{s_n} \sum_{l=1}^{\infty} (l+1) \exp\Big(-\frac{lp_n}{2k_n}(1+o(1))\Big)\\
&= 2(1+o(1)) \sqrt{s_n}\exp\big(-\tfrac{p_n}{2k_n}(1+o(1))\big)=o(1).
\end{align*}
This proves the second condition in \refprop{gen_upper_prop} and \refthm{upper_sn_thm} follows.
\end{proof}

Our methods allow to prove (\ref{Wn_in_prob_eq}) under the weaker assumption that $G\in C^1([0,\hat{x}])$ with $G(x)=\lambda x+O(x^2)$ for $x \downarrow 0$ when $s_n^2 \log n \log \log n\to 0$. To show the upper bound, one can use similar ideas to the proof of \refthm{upper_sn_thm} but employ the estimates from \reflemma{rough_sn_lem} together with the techniques used in the proof of \refthm{weight_a_thm} below. The proof of the lower bound is valid under these weaker assumptions without any changes as we will see in the next theorem. We keep the $C^2$-assumption to get better upper bounds on the weight and upper and lower bounds on the hopcount in \refsect{hop_sec}. These bounds allow us to prove a concentration of the hopcount under additional assumptions on $(s_n)$ in \refcoro{concentration_coro}.

We turn to the lower bound. Here the assumption $s_n^2 \log n \to 0$ is not needed.

\begin{theorem}\label{lower_s_thm}{\bf{(Lower bound for $W_n$ in the $Z^{s_n}$ case)}}
Let $X_e\overset{d}{=}Z^{s_n}$ with $s_n\to 0$ and $s_n \log n \to \infty$. Then
\[
W_n \ge (1-\epsilon_n) e \lfloor s_n \log n \rfloor u_n \qquad \text{with high probability}
\]
for every sequence $(\epsilon_n)_{n\in \mathbb{N}}\in (0,\infty)^{\N}$ with $\lim_{n\to \infty}\frac{ s_n \log 1/s_n}{\epsilon_n}=0$.
\end{theorem}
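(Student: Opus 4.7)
My plan is to apply Proposition~\ref{lower_gen_prop} with $d_n = (1-\epsilon_n) e \lfloor s_n \log n \rfloor u_n$, i.e.\ to show that $\Sigma_n := \sum_{k=1}^{n-1} n^{k-1} F_n^{*k}(d_n)$ tends to zero. By Remark~\ref{lambda_one_remark} I may assume $\lambda = 1$, so that $u_n^{p_n} = (1+o(1))/n$ where $p_n = 1/s_n$. Writing $k_n^- = \lfloor s_n \log n \rfloor$, a short computation gives $p_n \log d_n - \log n = p_n(1 + \log((1-\epsilon_n)k_n^-)) + o(1)$, which tends to $-\infty$ since $s_n \log n \to \infty$. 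Hence $d_n^{p_n} \to 0$, and Lemma~\ref{rough_sn_lem} applies uniformly in $k$.

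Lemma~\ref{rough_sn_lem} yields $F_n^{*k}(d_n) \le (cp_n)^k (d_n/k)^{kp_n}$, so
\[
\log\!\bigl[n^{k-1} F_n^{*k}(d_n)\bigr] \le -\log n + h(k) + O(1), \qquad h(k) := k \log(cp_n) + kp_n \log\!\frac{(1-\epsilon_n)ek_n^-}{k},
\]
where the $O(1)$ absorbs the error $kp_n \log u_n + k\log n = O(k/n)$ uniformly in $k \in [1, n-1]$. Since $h''(k) = -p_n/k < 0$, the function $h$ is strictly concave with unique maximizer $k^* = (1-\epsilon_n)k_n^- (cp_n)^{s_n}$ and $h(k^*) = k^* p_n$. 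Using $k_n^- p_n \le \log n$ together with $(cp_n)^{s_n} = e^{s_n \log(cp_n)} = 1 + O(s_n \log(1/s_n))$, and invoking the hypothesis $s_n \log(1/s_n) = o(\epsilon_n)$, I obtain
\[
h(k^*) - \log n \le -\epsilon_n \log n + O(s_n \log(1/s_n))\log n = -\epsilon_n \log n \,(1 - o(1)).
\]

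To control $\Sigma_n$, I split the sum at $k = 4ek_n^-$. For $k \le 4ek_n^-$, concavity gives $h(k) \le h(k^*)$, and since $k_n^- = O(\log n)$ this portion is $O(\log n \cdot n^{-\epsilon_n(1-o(1))})$. This vanishes because $\epsilon_n \log n \to \infty$: indeed $s_n \log(1/s_n) \log n = (s_n \log n) \log(1/s_n) \to \infty$ while $\epsilon_n/(s_n \log(1/s_n)) \to \infty$. For $k > 4ek_n^-$, the ratio $(1-\epsilon_n)ek_n^-/k < 1/4$ forces $h(k)/k \le \log(cp_n) - p_n \log 4 \le -1$ for all large $n$, since $p_n \log 4 - \log(cp_n) \sim (\log 4)/s_n \to \infty$. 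The tail therefore contributes $O(n^{-1} e^{-4e k_n^-}) = o(1)$, completing the proof that $\Sigma_n \to 0$.

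The crux of the argument is the balance in the second paragraph: the rough bound of Lemma~\ref{rough_sn_lem} unavoidably costs a factor $(cp_n)^k$, which adds $k \log(cp_n)$ to the logarithm. Across the critical window $k \asymp s_n \log n$ this costs $s_n \log(1/s_n) \cdot \log n$ in the exponent — precisely the threshold encoded by $\epsilon_n \gg s_n \log(1/s_n)$. A tighter lower bound (smaller permissible $\epsilon_n$) would require replacing Lemma~\ref{rough_sn_lem} by the sharper Lemma~\ref{asymp_sn_lem}, at the cost of the additional assumption $s_n^2 \log n \to 0$ used in Theorem~\ref{upper_sn_thm}.
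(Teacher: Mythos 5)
Your proof is correct and follows essentially the same strategy as the paper's: you apply Proposition~\ref{lower_gen_prop} with the same $d_n=(1-\eps_n)e\lfloor s_n\log n\rfloor u_n$, invoke the rough bound of Lemma~\ref{rough_sn_lem}, and identify that absorbing the $(cp_n)^k$ cost is exactly what forces the hypothesis $\eps_n\gg s_n\log(1/s_n)$. The bookkeeping differs cosmetically --- the paper bounds the exponent uniformly in $k$ via $\log x - x\le -1$ and then sums a geometric series over all $k\geq 1$, whereas you locate the maximizer $k^*$ of $h$ by calculus and split the sum at $4ek_n^-$ --- but both reduce to the same estimate. One algebraic slip to note: the displayed identity $p_n\log d_n-\log n=p_n(1+\log((1-\eps_n)k_n^-))+o(1)$ has $\log n$ on the wrong side and should read $p_n\log d_n=p_n(1+\log((1-\eps_n)k_n^-))-\log n+o(1)$; the stated conclusion $d_n^{p_n}\to 0$ and everything downstream remain correct.
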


Note that in case $\lim_{n\to \infty}s_n \log n=\infty$, $\lim_{n\to \infty}s_n^2 \log n=0$ the condition on $\eps_n$ is equivalent to $s_n \log \log n=o(\eps_n)$, because for $n$ large enough
\begin{align*}
\log s_n&=\log( \sqrt{s_n^2 \log n} (\log n)^{-\frac{1}{2}}) \le -\tfrac{1}{2} \log \log n,\\
\log s_n&=\log(s_n \log n(\log n)^{-1})\ge -\log \log n.
\end{align*}
Thus,
\begin{equation}\label{logpn_control}
\frac{1}{2}\le\frac{\log p_n}{\log\log n}\le 2  \qquad \text{for $n$ sufficiently large.}
\end{equation}

For the proof of \refthm{lower_s_thm}, we write $k_n^{-}=\lfloor s_n \log n\rfloor$ and $d_n:= (1-\epsilon_n) e k_n^- u_n$.

\begin{proof}[Proof of \refthm{lower_s_thm}]
Without loss of generality, assume that $\lambda=1$ and that $(\eps_n)$ is a null sequence. To check the criterion from \refprop{lower_gen_prop}, let $c>0$ be the constant from \reflemma{rough_sn_lem}. For all sufficiently large $n$,
\[
\sum_{k=1}^{n-1} n^{k-1} F_n^{*k}(d_n) \le \sum_{k=1}^{n-1} (cp_n)^k \exp\big((k-1)\log n +kp_n \log \tfrac{d_n}{k}\big).
\]
The term in the exponent can be rewritten as
\begin{align}\label{exponent_lower_bound}
(k-1)&\log n+kp_n \log(\tfrac{k_n^-}{k}(1-\eps_n)e n^{-s_n}[1+O(1/n)]^{s_n})
\notag\\
&=-p_ns_n \log n+kp_n\big[\log \tfrac{k_n^-}{k}+\log(1-\eps_n)+1+O(s_n/n)\big]
\notag\\
&= kp_n\big[-\tfrac{s_n \log n}{k} +\log \tfrac{s_n \log n}{k}+1+\log(1-\eps_n)+\log \tfrac{k_n^-}{s_n \log n}+O(s_n/n)\big]
\notag\\
&=kp_n\big[\varphi(\tfrac{s_n\log n}{k})+1+\log(1-\eps_n)+\log \tfrac{k_n^-}{s_n \log n}+O(s_n/n)\big],
\end{align}
where $\varphi(x):= -x+\log x$. Since $\varphi$ has its global maximum at $x=1$ and as $s_n/n=o((\log p_n)/p_n)=o(\eps_n)$, we conclude
\begin{align*}
\sum_{k=1}^{n-1} n^{k-1} F_n^{*k}(d_n) &\le  \sum_{k=1}^{n-1} (cp_n)^k \exp\big(k p_n [\log(1-\epsilon_n)+O(s_n/n)]\big)\\
&\le \sum_{k=1}^{\infty} \exp\big(-kp_n\eps_n\big[1+o(1) - \tfrac{\log(cp_n)}{\eps_n p_n}\big]\big)\\
&=(1+o(1))\exp\big(-p_n\epsilon_n[1+o(1)]\big)=o(1).\qedhere
\end{align*}
\end{proof}

\subsection{The hopcount}\label{hop_sec}
Knowing an upper bound for the weight of the smallest weight path, we can give estimates for the hopcount.

\begin{theorem}\label{hop_sn_thm}{\bf{(Hopcount in the $Z^{s_n}$ case)}}
Let $X_e\overset{d}{=}Z^{s_n}$ with $s_n \log n \to \infty$ and $s_n^2 \log n \to 0$. Then
\[
\frac{1}{1+\beta_n} s_n \log n < H_n < \frac{1}{1-\beta_n} s_n \log n \qquad \text{with high probability}
\]
for all sequences $(\beta_n)_{n\in \mathbb{N}} \in (0,\infty)^{\mathbb{N}}$ satisfying $\limsup_{n\to \infty} \frac{s_n \log 1/s_n}{ \beta_n^2} \leq 1/2-\delta$ for some $\delta>0$ and $\lim_{n\to \infty} (\beta_n s_n \log n)^{-1} (s_n \log n -k_n^-) = 0$.
\end{theorem}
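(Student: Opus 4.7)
The strategy is to combine the high-probability upper bound $W_n \le b_n := e\, k_n^+ u_n$ from \refthm{upper_sn_thm} with a first moment argument that rules out paths of the wrong length achieving weight $\le b_n$. On the event $\{W_n \le b_n\}$, the identity $H_n = k$ forces $N_k(b_n) \ge 1$. Writing $B_n$ for the set of integers $k \in [n-1]$ lying outside the interval $(k_n^+/(1+\beta_n),\, k_n^+/(1-\beta_n))$, Markov's inequality and a union bound reduce the problem to showing
\[
\sum_{k\in B_n} \E[N_k(b_n)] \to 0.
\]
The second hypothesis, $s_n \log n - k_n^- = o(\beta_n s_n\log n)$, ensures that the rounding between $k_n^\pm$ and $s_n \log n$ is negligible on the scale $\beta_n$, so that $k \in B_n$ translates cleanly into $|k/k_n^+ - 1| \ge \beta_n(1 - o(1))$.

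To control $\E[N_k(b_n)]$ I would use $\E[N_k(b_n)] \le n^{k-1} F_n^{*k}(b_n)$ and the rough upper bound $F_n^{*k}(b_n) \le (cp_n)^k (b_n/k)^{kp_n}$ of \reflemma{rough_sn_lem}; this is valid because $b_n^{p_n} = (ek_n^+)^{p_n} u_n^{p_n} = (ek_n^+)^{p_n}/n \cdot (1+o(1)) \le 1$ for $n$ large, and it has the advantage of applying for every $k \in [n-1]$, unlike \reflemma{asymp_sn_lem}, which requires $k = o(p_n)$. Setting $t := k/k_n^+$ and using $k_n^+ p_n = \log n + O(p_n)$ together with $u_n^{p_n} = (1+o(1))/n$, algebra gives
\[
\log \E[N_k(b_n)] \le \log n \cdot \varphi(t) + k\log(cp_n) + o(k),\qquad \varphi(t) := t(1-\log t) - 1.
\]
The function $\varphi$ has its unique maximum $\varphi(1) = 0$, with $\varphi(t) = -(t-1)^2/2 + O((t-1)^3)$ near $t=1$ and $\varphi(t) \to -\infty$ as $t \to \infty$. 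For $k \in B_n$ with $t$ in a fixed neighborhood of $1$, using $k\log(c p_n) \sim t\, s_n \log n \log(1/s_n)$, this yields
\[
\log \E[N_k(b_n)] \le -\tfrac{1}{2}\log n \cdot \bigl(\beta_n^2 - 2t\, s_n \log(1/s_n)\bigr)(1+o(1)).
\]
The first hypothesis $\limsup s_n \log(1/s_n)/\beta_n^2 \le 1/2 - \delta$ then bounds the bracket below by a positive constant multiple of $\beta_n^2$, so each such summand decays like $\exp(-c\log n \cdot s_n\log(1/s_n))$. Since $\log n \cdot s_n \log(1/s_n) \to \infty$ under the assumptions while there are only $O(k_n^+) = O(\log n)$ such bad values of $k$, this contribution vanishes. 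For $t$ bounded away from $1$, $\varphi(t)$ is bounded away from $0$, and for large $t$ the decay $\varphi(t) \le -\tfrac{1}{2}t\log t$ dominates $k\log p_n$, producing geometric decay in $k$ which sums easily up to $n-1$.

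The main obstacle is the delicate competition in the exponent between the decay $-\log n \cdot (t-1)^2/2$ coming from $\varphi$ and the growth $k\log(cp_n) \sim t\, s_n\log n\log(1/s_n)$ coming from the factor $(cp_n)^k$ in the rough bound on $F_n^{*k}$. The assumption on $\beta_n$ is calibrated precisely so that the former dominates the latter by a constant factor; relaxing it would make the first-moment method fail, reflecting the fact that the expected number of paths of length near $k_n^+$ with weight $\le b_n$ genuinely blows up on that scale. The complementary hypothesis on $s_n\log n - k_n^-$ plays a supporting role, ensuring that $B_n$ does not contain values of $t$ so close to $1$ that the Taylor expansion of $\varphi$ at $t=1$ cannot be applied.
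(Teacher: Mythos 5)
Your overall strategy is the paper's: combine the whp upper bound on $W_n$ from \refthm{upper_sn_thm} with a Markov/union bound on $\E[N_k(b_n)]$ over bad lengths $k$, estimate $F_n^{*k}(b_n)$ by the rough bound of \reflemma{rough_sn_lem} (valid for all $k\le n-1$), and exploit the concavity of the exponent function $\varphi$, with the first hypothesis on $\beta_n$ absorbing the $k\log(cp_n)$ term. The genuine gap is your choice $b_n=e\,k_n^+u_n$ together with the claim that the second hypothesis makes ``the rounding between $k_n^\pm$ and $s_n\log n$'' negligible. That hypothesis controls only $s_n\log n-k_n^-$, the fractional part of $s_n\log n$; it says nothing about $k_n^+-s_n\log n=1-(s_n\log n-k_n^-)$, which can be of order one. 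With your $b_n$, the exponent of $\E[N_k(b_n)]$ acquires the additional \emph{positive} term $kp_n\log\tfrac{k_n^+}{s_n\log n}\approx kp_n\,\tfrac{k_n^+-s_n\log n}{s_n\log n}$, and $\tfrac{1}{s_n\log n}$ need not be $o(\beta_n^2)$. For instance, take $s_n\log n=\lfloor(\log n)^{1/4}\rfloor+e^{-n}$ (so $s_n\asymp(\log n)^{-3/4}$, both standing assumptions hold, and the second hypothesis is satisfied for any $\beta_n$) and $\beta_n^2=3s_n\log(1/s_n)\asymp(\log n)^{-3/4}\log\log n$, which satisfies the first hypothesis; then $1/(s_n\log n)\asymp(\log n)^{-1/4}\gg\beta_n^2$, the bracket in your exponent is positive for $k$ just below $s_n\log n/(1+\beta_n)$, and $\E[N_k(ek_n^+u_n)]$ blows up, so the first-moment bound cannot exclude these lengths. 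A related symptom of the same issue: defining the bad set via $k_n^+$ can fail to cover the integers $k$ with $\tfrac{s_n\log n}{1-\beta_n}\le k<\tfrac{k_n^+}{1-\beta_n}$, which the theorem also requires you to rule out.

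This is exactly why \refthm{upper_sn_thm} is stated with $\eta(s_n\log n)=\min\{k_n^+,\,e^{(s_n\log n-k_n^-)/k_n^-}k_n^-\}$ rather than with $k_n^+$: the paper runs the hopcount argument with $b_n=e\sigma_nk_n^-u_n$, $\sigma_n=e^{(s_n\log n-k_n^-)/k_n^-}$, so that the corresponding error in the exponent is $\log\sigma_n+\log\tfrac{k_n^-}{s_n\log n}=O\big((\tfrac{s_n\log n-k_n^-}{k_n^-})^2\big)$, which is $o(\beta_n^2)$ precisely under the second hypothesis (a condition of order $\beta_n$, not $\beta_n^2$, exactly because the error is quadratic in the fractional part). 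With that replacement your computation — Taylor expansion of $\varphi$ near its maximum for $k$ near $s_n\log n$, the first hypothesis to dominate $s_n\log(cp_n)$ by $(\tfrac12-\delta)\beta_n^2$, and geometric decay in $k$ up to $n-1$ — goes through and is essentially the paper's proof; to repair your version you must either prove and use this sharper upper bound on $W_n$ (not just $e k_n^+u_n$) or add an assumption controlling $k_n^+-s_n\log n$, which the theorem does not grant.
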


Notice that the second condition on $\beta_n$ is no restriction when $s_n \log n$ is integer-valued for all sufficiently large $n$. The convergence $\frac{H_n}{s_n \log n} \overset{\P}{\rightarrow} 1$ stated in \refthm{weight_thm} follows easily by choosing $\beta_n=\eps$ for arbitrary $\eps >0$.

\begin{proof}[Proof of \refthm{hop_sn_thm}] Without loss of generality, assume that $\lambda=1$ and that $(\beta_n)$ is a null sequence. Denote $k_n^- = \lfloor s_n\log n\rfloor$, $\sigma_n=e^{\frac{s_n \log n}{\lfloor s_n\log n\rfloor}-1}$ and $b_n = e \sigma_n k_n^- u_n$. By \refthm{upper_sn_thm}, $\P(W_n > b_n)\to 0$ for $n\to \infty$. Denote $h_n:=\lfloor \frac{1}{1+\beta_n} s_n \log n\rfloor$. To prove that $H_n > h_n$ with high probability, it is sufficient to show that
\[
\sum_{k=1}^{h_n} \mathbb{E}[N_k(b_n)] \to 0 \qquad \quad \text{for }n \to \infty.
\]
Indeed, this convergence implies
\begin{equation}\label{hopcount_lower_gen}
\begin{split}
\P(H_n\le h_n) &= \P(H_n \le h_n, W_n \le b_n) +o(1)\le \P\big(\cup_{k=1}^{h_n} \{N_k(b_n)\not=0\}\big) +o(1)\\
&\le \sum_{k=1}^{h_n}\P(N_k(b_n)\ge 1) +o(1)\le \sum_{k=1}^{h_n} \E[N_k(b_n)] +o(1)=o(1).
\end{split}
\end{equation}
Since $b_n =o(1)$, we can use \reflemma{rough_sn_lem} to obtain
\[
\sum_{k=1}^{h_n} \E[N_k(b_n)] \le \sum_{k=1}^{h_n} n^{k-1}F^{*k} (b_n) \le \sum_{k=1}^{h_n} n^{k-1} \Big(\frac{b_n}{k}\Big)^{kp_n} (cp_n)^k.
\]
Using (\ref{exponent_lower_bound}) with $\sigma_n$ in place of $1-\eps_n$, we see that
\[
(k-1)\log n+k p_n \log(\tfrac{k_n^-}{k}\sigma_ne u_n) =kp_n\big[\varphi(\tfrac{s_n \log n}{k})+1+\log \sigma_n+\log \tfrac{k_n^-}{s_n \log n}+O(s_n/n)\big].
\]
The function $\varphi(x)= -x +\log x$ is strictly concave with unique maximum at $x=1$. Since $k\in [h_n]$ and $h_n\le \frac{1}{1+\beta_n} s_n \log n$, we have $\varphi(\frac{s_n \log n}{k})+1 \le \varphi(1+\beta_n)+1=-\frac{1}{2}\beta_n^2(1+o(1))$. Moreover, $O(s_n/n)=o((\log p_n)/p_n)=o(\beta_n^2)$ and by assumption
\[
\log \sigma_n+\log \tfrac{k_n^-}{s_n \log n}=
\tfrac{s_n \log n}{k_n^-}-1- \big(\tfrac{s_n \log n-k_n^-}{k_n^{-}}+O((\tfrac{s_n \log n-k_n^-}{k_n^{-}})^2)\big)=
O((\tfrac{s_n \log n-k_n^-}{k_n^{-}})^2)=o(\beta_n^2).
\]
Inserting this estimate, we derive
\[
\sum_{k=1}^{h_n} \E[N_k(b_n)]\le
\sum_{k=1}^{h_n} \exp\big(-kp_n\beta_n^2 [\tfrac{1}{2}+o(1)-\tfrac{\log(cp_n)}{p_n \beta_n^2}]\big)\le
\sum_{k=1}^{\infty}\exp\big(-k p_n\beta_n^2 [\delta+ o(1)]\big)=o(1).
\]
We now turn to the upper bound. Let $h_n:= \lceil \frac{1}{1-\beta_n}s_n \log n\rceil$. Similar to the lower bound, it suffices to show that
\[
\sum_{k=h_n}^{n-1} \mathbb{E}[N_k(b_n)] \to 0 \qquad \quad \text{for }n \to \infty.
\]
With the same estimates as above we get
\begin{align*}
\sum_{k=h_n}^{n-1} \mathbb{E}[N_k(b_n)]&\le \sum_{k=h_n}^{n-1} \exp\Big(kp_n\big[\varphi(\tfrac{s_n \log n}{k})+1+o(\beta_n^2)+\tfrac{\log(cp_n)}{p_n}\big]\Big).
\end{align*}
Since $k \ge h_n \ge \frac{1}{1-\beta_n} s_n \log n$, we can again use the strict concavity of $\varphi$ but now applied for values smaller than $1$. Hence, $\varphi(\frac{s_n \log n}{k})+1\le \varphi(1-\beta_n)+1=-\frac{1}{2} \beta_n^2 (1+o(1))$. Therefore, we have the exact same exponent as in the lower bound and the statement is established.
\end{proof}

Under stronger assumptions on the sequence $(s_n)$, we obtain the following concentration result for the hopcount:

\begin{corollary}{\bf{(Concentration of the hopcount)}}\label{concentration_coro}
Let $X_e\overset{d}{=}Z^{s_n}$ with $s_n^{3} (\log n)^2 \log \log n \to 0$ and $s_n \log n \to \infty$. If $k_n = s_n \log n$ is integer-valued for large $n$, then
\[
\P(H_n =k_n ) \to 1 \qquad \text{for }n \to \infty.
\]
\end{corollary}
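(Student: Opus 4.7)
The plan is to obtain the corollary as a direct application of \refthm{hop_sn_thm} with a carefully chosen sequence $\beta_n$ that shrinks like $1/k_n$, so that the resulting two-sided bound on $H_n$ sandwiches it between $k_n-1$ and $k_n+1$ and integrality of the hopcount forces equality.

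First I would verify that the hypotheses of \refthm{hop_sn_thm} hold. By assumption $s_n \log n \to \infty$, and from $s_n^3 (\log n)^2 \log \log n \to 0$ one gets $s_n (\log n)^{2/3} \to 0$, hence $s_n^2 \log n = (s_n (\log n)^{2/3})^2/(\log n)^{1/3} \to 0$. In particular the regime of \refthm{weight_thm} is in force, so the two-sided estimate (\ref{logpn_control}), namely $\log p_n = \log(1/s_n) = O(\log \log n)$, is available.

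Next I would set $\beta_n := 1/(2 k_n) = 1/(2 s_n \log n)$ and check the two conditions on $\beta_n$ in \refthm{hop_sn_thm}. Since $k_n = s_n \log n$ is an integer eventually, $k_n^- = k_n$ and the second condition $(\beta_n s_n \log n)^{-1}(s_n \log n - k_n^-) = 0$ is trivial. For the first condition,
\[
\frac{s_n \log(1/s_n)}{\beta_n^2} \;=\; 4\,s_n^3 (\log n)^2 \log(1/s_n) \;\le\; 8\, s_n^3(\log n)^2 \log \log n
\]
for large $n$ by (\ref{logpn_control}), and the right-hand side tends to $0$ by hypothesis; so $\limsup \le 1/2 - \delta$ for, say, $\delta = 1/4$. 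Thus \refthm{hop_sn_thm} yields, with high probability,
\[
\frac{k_n}{1+\beta_n} \;<\; H_n \;<\; \frac{k_n}{1-\beta_n}.
\]

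The final step is a small arithmetic check: with $\beta_n = 1/(2k_n)$ and $k_n \to \infty$, elementary manipulations give $k_n/(1+\beta_n) = 2k_n^2/(2k_n+1) > k_n - 1$ and $k_n/(1-\beta_n) = 2k_n^2/(2k_n-1) < k_n+1$ (for $k_n \ge 2$). Therefore whp $k_n - 1 < H_n < k_n + 1$, and since $H_n \in \N$ this forces $H_n = k_n$.

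There is no real obstacle: the whole content is that the hypothesis $s_n^3(\log n)^2 \log \log n \to 0$ is exactly what makes the choice $\beta_n = 1/(2k_n)$ admissible in \refthm{hop_sn_thm}, via the crude bound $\log(1/s_n) = O(\log \log n)$ from (\ref{logpn_control}). The mildest subtlety is remembering to first establish $s_n^2 \log n \to 0$ from the stronger stated hypothesis before invoking the bounds in (\ref{logpn_control}), which in turn rely on that earlier regime.
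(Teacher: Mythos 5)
Your proposal is correct and follows the same overall route as the paper: verify that the hypotheses imply $s_n^2\log n \to 0$, plug a suitable $\beta_n$ into \refthm{hop_sn_thm}, observe the second condition is vacuous because $k_n^-=k_n$ exactly, and close by integrality of $H_n$. The only difference is the specific choice of $\beta_n$: you take $\beta_n = 1/(2k_n)$ so that $k_n\beta_n$ is a fixed constant less than $1$ and the sandwich $k_n-1 < H_n < k_n+1$ follows by direct arithmetic, whereas the paper takes $\beta_n = 2\sqrt{\log p_n / p_n}$ and then shows $k_n\beta_n \to 0$. Both choices satisfy the conditions of \refthm{hop_sn_thm} for the same underlying reason (the bound $\log p_n = O(\log\log n)$ from (\ref{logpn_control}) and the hypothesis $s_n^3(\log n)^2\log\log n\to 0$), and both lead to the same conclusion; yours is arguably a bit more transparent since it fixes the window width directly.
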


\begin{proof}
Choose $\beta_n = 2\sqrt{\frac{\log p_n}{p_n}}$ in \refthm{hop_sn_thm}. We have
\[
\frac{k_n}{1+\beta_n} < H_n < \frac{k_n}{1-\beta_n} \quad \Leftrightarrow \quad -\frac{\beta_n k_n}{1+\beta_n} < H_n-k_n < \frac{\beta_n k_n}{1-\beta_n}.
\]
Using the definition of $\beta_n$ and $k_n$, (\ref{logpn_control}) and the assumption, we obtain
\[
k_n \beta_n = s_n \log n \,2\sqrt{\tfrac{\log p_n}{p_n}}= 2 s_n^{3/2} \log n \,(\log \log n)^{1/2} \sqrt{\tfrac{\log p_n}{\log \log n}}= o(1).
\]
Hence, $|H_n -k_n|\overset{\P}{\rightarrow} 0$. Since $H_n$ is integer-valued, this implies the claim.
\end{proof}

\subsection{The very small \texorpdfstring{$s_n$}{sn} case}\label{very_small_sn}

In this section we determine the behaviour of the weight of the shortest path and the hopcount in the case that $(s_n)$ tends to zero like $\gamma/ \log n$. We begin with rough estimates to prove \refthm{very_small_sn_thm}. These estimates are then improved, under slightly stronger conditions, to establish \refthm{very_small_2nd_order}.

First, assume that $Z$ is a positive random variable with $G(x)=\P(Z \le x)=\lambda x (1+o(1))$ for some $\lambda >0$ and $x\downarrow 0$. In the sequel we write $p_n=1/s_n$ and for a vertex $v \in [n]$ and an edge $e \in \text{E}_n$, $e \sim v$ means that $v$ is a vertex in $e$.

\begin{proof}[Proof of \refthm{very_small_sn_thm}] For the upper bound on $W_n$, note that by definition $W_n \le X_{\{1,2\}}\overset{d}{=}Z^{s_n}$ and $Z^{s_n} \overset{\P}{\longrightarrow} 1$. For the lower bound, we use that $F_n(x)=G(x^{p_n}) =o(1)$ for $x<1$ to obtain for those $x$
\begin{align}\label{bound_minE}
\P\Bigl( \min_{e \in \text{E}_n, e \sim 1} X_e \ge x \Big)
&=\big(1-F_n(x)\big)^{n-1} =\exp\big(-(n-1) F_n(x) (1+o(1))\big)
\notag\\&=
\exp\big(-(n-1)^{s_n p_n} \lambda x^{p_n} (1+o(1))\big)
\notag\\&=
\exp\big(- \lambda \big[e^{\gamma} x+o(1)\big]^{p_n} (1+o(1))\big),
\end{align}
where we used that $(n-1)^{s_n}=e^{\gamma} (1+o(1))$. The right-hand side converges to $1$ for all $x < e^{-\gamma}$. Since $W_n \ge \min_{e \in \text{E}_n, e \sim 1}X_e$, we conclude
\[
\P\big(W_n \ge (1-\eps)e^{-\gamma}\big)\ge \P\big( \min_{e \in \text{E}_n, e \sim 1} X_e \ge (1-\eps)e^{-\gamma}\big) \to 1.
\]
For the hopcount, we note that
\[
H_n \le \frac{W_n}{\min_{e \in \text{E}_n} X_e} \le \frac{X_{\{1,2\}}}{\min_{e \in \text{E}_n} X_e}.
\]
Choose $\delta>0$ so that $1+\epsilon= (1+\delta)^2$. 
A computation similar to (\ref{bound_minE}), with $n-1$ replaced by $\binom{n}{2}$, shows that $\min_{e \in \text{E}_n}X_e \ge x$ with high probability for all $x < e^{-2\gamma}$. Thus,
\begin{align*}
\P(H_n \le (1+\epsilon) e^{2\gamma}) &\ge \P( X_{\{1,2\}}\big(\min_{e \in \text{E}_n} X_e\big)^{-1}\le (1+\delta)^2 e^{2\gamma})\\
&\ge \P(\{X_{\{1,2\}} \le 1+\delta \}\cap \{\big(\min_{e \in \text{E}_n} X_e\big)^{-1} \le (1+\delta)e^{2\gamma}\})= 1-o(1).
\end{align*}
Since $\eps>0$ was arbitrary and $H_n$ is integer-valued, the claim is established.
\end{proof}

The bounds in \refthm{very_small_sn_thm} are not optimal. To derive the correct values for $W_n$ and $H_n$, we assume that there exists $\hat{x} \in (0,\infty)$ such that $G \in C^1([0,\hat{x}])$ and $G'(0+)>0$. Recall from (\ref{eq:defgg}) that we define for $\gamma \in [0,\infty)$ and $x >0$
\[
\gga(x)=x e^{-\gamma (1-1/x)}= e^{-\gamma} x e^{\gamma/x}.
\]
Let $W_n(k):=\min_{\p \in \S_{1,2}(k)} w(\p)$ denotes the weight of the shortest path between vertex $1$ and $2$ which uses exactly $k$ edges. The following proposition provides lower bounds on $W_n(k)$.

\begin{proposition}\label{prop:Wnk}
Let $X_e\overset{d}{=}Z^{s_n}$ with $s_n\log n  \to \gamma \in [0,\infty)$. \\
(a) For all $\eps >0$,
\[
W_n(k) \ge (1-\eps) \min\{\gga(k),1\} \qquad \forall\, k \in \N \text{ with high probability}.
\]
(b) Let $k \ge 2$ with $\gga(k)>1$ and $\eps<\min\{e^{-\gamma (k-1)}/4,\gga(k)-1\}$. Then $W_n(k) \ge 1+\eps$ whp.
\end{proposition}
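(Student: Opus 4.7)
The approach for both parts is the first moment method: combining Proposition~\ref{lower_gen_prop}'s inequality with Theorem~\ref{path_counting_thm}/(\ref{size_S12k_eq}), we have $\P(W_n(k)<b)\le\E[N_k(b)]\le n^{k-1}F_n^{*k}(b)$. Throughout I would write $\log n=(\gamma+\delta_n)p_n$ with $\delta_n:=s_n\log n-\gamma\to 0$, and exploit the algebraic identity
\[
(k-1)\gamma-k\log k=-k\log\gga(k),
\]
which follows directly from $\log\gga(k)=-\gamma+\log k+\gamma/k$; this will be the mechanism that converts raw computations into statements about $\gga$.

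For part~(a), I take $b_n(k):=(1-\eps)\min\{\gga(k),1\}\le 1-\eps<1$. Since $b_n(k)<1$, Lemma~\ref{rough_sn_lem} gives $F_n^{*k}(b_n(k))\le(cp_n)^k(b_n(k)/k)^{kp_n}$. When $\gga(k)\le 1$, substituting $b_n(k)=(1-\eps)\gga(k)$ and applying the identity produces
\[
\log\E[N_k(b_n(k))]\le p_n\bigl[k\log(1-\eps)+(k-1)\delta_n\bigr]+O(\log p_n),
\]
while when $\gga(k)>1$, substituting $b_n(k)=1-\eps$ produces the same bound with an additional strictly negative term $-kp_n\log\gga(k)$. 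In both cases the leading order is $p_nk\log(1-\eps)<0$ for each fixed $k$. For the union bound $\sum_{k=1}^{n-1}\E[N_k(b_n(k))]\to 0$, the large-$k$ tail is controlled by $\gga(k)\sim ke^{-\gamma}\to\infty$, producing a super-geometric $\exp(-p_nk\log k(1+o(1)))$; the finitely many small-$k$ terms each vanish pointwise.

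For part~(b) the challenge is $F_n^{*k}(1+\eps)$ outside the range of Lemma~\ref{rough_sn_lem}. The structural observation is that $\sum_i X_i\le 1+\eps<2$ forces at most one $X_i\ge 1$, giving the decomposition
\[
F_n^{*k}(1+\eps)\le\P\bigl(\textstyle\max_i X_i<1,\;\sum_i X_i\le 1+\eps\bigr)+k\,F_n^{*(k-1)}(\eps).
\]
The second summand is bounded directly by Lemma~\ref{rough_sn_lem} (since $\eps<1$); after multiplication by $n^{k-1}$ its log is $(k-1)p_n(\log\eps+\gamma-\log(k-1))+O(\log p_n)$, which tends to $-\infty$ under $\eps<e^{-\gamma(k-1)}/4$. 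For the first summand I would integrate the density bound $f_n(x)\le cp_nx^{p_n-1}$ (valid on $(0,1)$ by the $C^2$ assumption on $G$) over the simplex $\{\sum x_i\le 1+\eps,\,x_i\in(0,1)\}$; the Lagrangian maximiser of $\prod x_i^{p_n-1}$ is interior at $x_i=(1+\eps)/k$ (since $k\ge 2$ and $\eps<1$), and combining with the simplex volume $(1+\eps)^k/k!$ and Stirling yields a bound of order $(c'ep_n)^k((1+\eps)/k)^{kp_n}/\sqrt{2\pi k}$. Multiplying by $n^{k-1}$ and applying the identity, its log reads $kp_n\log((1+\eps)/\gga(k))+O(\log p_n)$, which tends to $-\infty$ precisely when $\eps<\gga(k)-1$.

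The main obstacle is the careful handling of the density for $x$ near or above $1$ when $\hat x<1$ (where $G'(x^{p_n})$ is not controlled by the $C^2$-on-$[0,\hat x]$ hypothesis), plus the bookkeeping of the $\delta_n p_n$ error in the exponents, which is $o(p_n)$ but not automatically dominated by $-kp_n\log\gga(k)$ at points where $\gga(k)$ is close to $1$. The numerical constant $1/4$ in the hypothesis of~(b) looks like a convenient uniform bound absorbing Stirling prefactors and the constant $c$ of Lemma~\ref{rough_sn_lem}, rather than a sharp threshold.
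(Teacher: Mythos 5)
Your proof of part (a) is essentially the paper's proof, modulo presentation: you use the same choice $x_k=(1-\eps)\min\{\gga(k),1\}$, the same rough bound from Lemma~\ref{rough_sn_lem}, and the same identity $(k-1)\gamma-k\log k=-k\log\gga(k)$ to extract the decay $e^{-k\eps p_n(1+o(1))}$. (One bookkeeping slip: the $(cp_n)^k$ prefactor contributes $k\log(cp_n)$, not $O(\log p_n)$; this is still $o(k\eps p_n)$, so the conclusion is unaffected, but it matters that the bound be written as uniform in $k$ before summing. Also, your split into ``super-geometric tail'' plus ``finitely many small~$k$'' is unnecessary: the paper's bound (\ref{ggProp}) is already uniformly geometric in~$k$.)

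For part (b) you choose a genuinely different decomposition. You split $F_n^{*k}(1+\eps)$ according to whether some $X_i\ge 1$ (using $1+\eps<2$), whereas the paper first rules out, by a union bound over edges, the event that some edge in a length-$k$ path has weight $\le\delta:=e^{-\gamma(k-1)}/2$. The paper's choice of $\delta$ (and thus the constant $1/4$ in the hypothesis $\eps<e^{-\gamma(k-1)}/4$) is not a slack Stirling buffer: it is chosen exactly so that on the complementary event, $X_e\le 1+\eps-\delta\le 1-\eps$ for every edge, and then $X_e^{p_n}\le(1-\eps)^{p_n}\to 0<\hat x$, which is precisely what places the density back inside the controlled interval $[0,\hat x]$. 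Your decomposition lacks this ingredient, and that is the genuine gap you flagged at the end: on the event $\{\max_i X_i<1,\;\sum_i X_i\le 1+\eps\}$ a single coordinate can be arbitrarily close to~$1$, and the density of $Z^{s_n}$ at $x$ equals $p_nx^{p_n-1}G'(x^{p_n})$ only where $G$ is differentiable; when $\hat x<1$ and $x\in(\hat x^{s_n},1)$, the $C^2$-on-$[0,\hat x]$ hypothesis gives you no bound on $G'(x^{p_n})$ — indeed no density need exist there — so your maximisation over the simplex cannot be carried out. Your route could be repaired (e.g.\ a further split over whether all $X_i<\hat x^{s_n}$, noting that at most one coordinate can exceed $\hat x^{s_n}$ for large $n$, and the remaining $k-1$ then sum to at most $1+\eps-\hat x^{s_n}\to\eps$, reducing to your second term), but as written this step is missing, and it is the step the paper's $\delta$-device is designed to supply.
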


\begin{proof}
Before we start the main part of the proof, notice that for all $\eps \in (0,1)$,
\begin{equation}\label{ggProp}
n^{k-1} \Big(\frac{(1-\eps) \gga(k)}{k}\Big)^{k p_n} =\exp\Big(p_n k \big[\log(1-\eps) + (1-1/k) (s_n \log n -\gamma)\big]\Big) \le e^{-k \eps p_n (1+o(1))},
\end{equation}
where the error is uniform in $k \in \N$.\\
(a) Denote $x_{k}=(1-\eps) \min\{\gga(k),1\}$. Since $x_k<1$, $x_k^{p_n}<\hat{x}$ for sufficiently large $n$. By \reflemma{rough_sn_lem},
\[
F_n^{*k}(x_k) \le (cp_n)^k \Big(\frac{x_k}{k}\Big)^{kp_n},
\]
where $c$ is the constant from the Lemma. Hence, (\ref{ggProp}) implies that
\begin{align*}
\E[N_k(x_k)]&\le (cp_n)^k n^{k-1} \Big(\frac{x_k}{k}\Big)^{kp_n} \le \exp\big( - k \eps p_n (1+o(1)) \big),
\end{align*}
where the error is uniform in $k \in \N$. In particular, there exists $n_0 \in \N$ such that $\E[N_k(x_k)] \le e^{-2 \eps p_n k}$ for all $k \in \N$, $n \ge n_0$. Markov's inequality yields for all $n \ge n_0$
\begin{align*}
\P\Big(\bigcup_{k \in \N} \{W_n(k) < x_k\}\Big)& \le \sum_{k =1}^{\infty} \P(W_n(k) <x_k) \le \sum_{k=1}^{\infty} \P(N_k(x_k) \ge 1)\\
&\le\sum_{k=1}^{\infty} \E[N_k(x_k)] \le \sum_{k=1}^{\infty} e^{-2 \eps p_n k} = \frac{e^{-2 \eps p_n} }{1-e^{-2 \eps p_n}}=o(1).
\end{align*}
(b) Let $0< \eps< \min\{e^{-\gamma (k-1)}/4,\gga(k)-1\}$ and $\delta=e^{-\gamma (k-1)}/2$. We first show that every path between vertex $1$ and $2$ which uses exactly $k$ edges contains with high probability only edges of weight larger than $\delta$. Indeed,
\begin{equation}\label{XeLarge}
\begin{split}
\P\Big( \bigcup_{\p \in \S_{1,2}(k)} \bigcup_{e \in \p} \{X_e \le \delta\}\Big) &\le n^{k-1} k G(\delta^{p_n})= n^{k-1} k \lambda \delta^{p_n} (1+o(1))\\
&=(1+o(1)) k \lambda \exp\big((k-1) \log n- p_n \gamma (k-1) -p_n \log 2 \big)=o(1).
\end{split}
\end{equation}
In the next step we estimate the probability that the weight of a chosen path $\p \in \S_{1,2}(k)$ is at most $1+\eps$. By assumption $d=\max\{G'(x): x \le 1\land \hat{x}\} \in (0,\infty)$. Moreover, when $x_1^{s_n}+ \ldots +x_k^{s_n} \le 1+\eps$ and $x_i^{s_n} > \delta$ for all $i \in [k]$, then $k\ge 2$ implies that $x_i^{s_n} \le 1+\eps -\delta \le 1-\eps$ for all $i \in [k]$. In particular, $x_i \le (1-\eps)^{p_n} \le 1\land \hat{x}$ for $n$ sufficiently large. Thus,
\begin{align*}
&\P\Big(\sum_{e \in \p} X_e \le 1+\eps, X_e > \delta \, \forall e \in \p\Big) =\int_{ \sum_{i=1}^k x_i^{s_n} \le 1+\eps,\, x_i^{s_n}>\delta \, \forall i \in [k]} \prod_{i=1}^k G'(x_i) \; d(x_1,\ldots x_k)\\
&\qquad \qquad \qquad \le d^k \int_{ \sum_{i=1}^k x_i^{s_n} \le 1+\eps,\, x_i \ge 0 \, \forall i \in [k]} \; d(x_1,\ldots x_k) = d^k (1+\eps)^{k p_n} \frac{\Gamma(1+p_n)^k}{\Gamma(1+kp_n)},
\end{align*}
where we used 4.634 in \cite{GraRyz94} to compute the integral. Since $1+ \eps <\gga(k)$, (\ref{stirlingForFk}) and (\ref{ggProp}) yield
\begin{equation}\label{1pathweight}
\P\Big( \bigcup_{\p \in \S_{1,2}(k)} \Big\{ \sum_{e \in \p} X_e \le 1+\eps, X_e > \delta \, \forall e \in \p \Big\}\Big) \le n^{k-1} d^k (1+\eps)^{p_n k} \frac{\Gamma(1+p_n)^k}{\Gamma(1+kp_n)} = o(1).
\end{equation}
The sum of (\ref{XeLarge}) and (\ref{1pathweight}) provides an upper bound on the probability that $W_n(k)$ is at most $1+\eps$. Thus, the proof is complete. 
\end{proof}

\refprop{prop:Wnk} immediately gives improved bounds on the weight of the optimal path and the hopcount. We denote $\theta(\gamma)=\sup\{k \in \N: \gga(k) \le 1\}$.

\begin{theorem}\label{thm:VsmallC1}
Let $X_e\overset{d}{=}Z^{s_n}$ with $s_n\log n  \to \gamma \in [0,\infty)$. Then, for all $\eps >0$,
\[
(1-\eps) \min_{k \in \N} \gga(k) \le W_n \le 1+ \eps \qquad \text{and} \qquad 1 \le H_n \le \theta(\gamma) \qquad \text{with high probability.}
\]
\end{theorem}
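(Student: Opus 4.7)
The plan is to combine \refprop{prop:Wnk} with two auxiliary ingredients: the direct-edge upper bound $W_n\le X_{\{1,2\}}=Z^{s_n}$, and the crude estimate $\min_{e\in\text{E}_n}X_e\ge c$ whp for $c:=e^{-2\gamma}/2$, obtained exactly as in the proof of \refthm{very_small_sn_thm} by a computation analogous to (\ref{bound_minE}) with $n-1$ replaced by $\binom{n}{2}$. All four claims should then fall out quickly.

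First I would dispose of the three easy bounds. The weight upper bound $W_n\le 1+\eps$ is immediate, since $Z^{s_n}\overset{\P}{\to}1$ as $s_n\to 0$ with $Z$ a.s.\ positive and finite, and $W_n\le X_{\{1,2\}}$. The hopcount lower bound $H_n\ge 1$ is trivial, as any path between two distinct vertices uses at least one edge. The weight lower bound follows by taking the minimum over $k$ in \refprop{prop:Wnk}(a), which produces $W_n(k)\ge(1-\eps)\min\{\gga(k),1\}$ for all $k\in\N$ simultaneously whp; since $\gga(1)=1$, one has $\min_k\gga(k)\le 1$, so the clipping at $1$ is inactive and we get $W_n=\min_k W_n(k)\ge (1-\eps)\min_k\gga(k)$.

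The real work is the hopcount upper bound $H_n\le\theta(\gamma)$. The strategy is to show that for every $k>\theta(\gamma)$, $W_n(k)$ strictly exceeds $X_{\{1,2\}}$ whp, so that no such $k$ can be the hopcount of the optimal path. I would fix $\eps_0>0$ small enough that $X_{\{1,2\}}\le 1+\eps_0$ whp, and split the range $k>\theta(\gamma)$ at a finite threshold $K$ chosen so that $Kc>1+\eps_0$. On the finite range $\theta(\gamma)<k\le K$, each $\gga(k)>1$ by definition of $\theta(\gamma)$, so the quantity $\min\{e^{-\gamma(k-1)}/4,\gga(k)-1\}$ is bounded below by some strictly positive constant over this finite set; I can pick a common $\eps^*\in(0,\eps_0)$ below that constant, apply \refprop{prop:Wnk}(b) to each such $k$, and union-bound over the finite set to obtain $W_n(k)\ge 1+\eps^*>X_{\{1,2\}}$ whp. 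On the tail $k>K$, the uniform-in-$k$ bound $W_n(k)\ge k\min_{e\in\text{E}_n}X_e\ge kc>1+\eps_0$ holds whp simultaneously.

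The main obstacle is thus uniformity in $k$ in the hopcount argument, since \refprop{prop:Wnk}(b) is stated only for fixed $k$; the splitting device, handling the finite range via (b) plus a union bound and handling the tail via the uniform minimum-edge bound, is exactly what resolves this.
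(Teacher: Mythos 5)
Your proposal is correct and follows essentially the same route as the paper: both use \refprop{prop:Wnk}(a) for the weight lower bound (noting $\gga(1)=1$), both bound $W_n\le 1+\eps$ via $W_n\le X_{\{1,2\}}=Z^{s_n}\overset{\P}{\to}1$, and both rule out $k>\theta(\gamma)$ by applying \refprop{prop:Wnk}(b) on a truncated, finite range of $k$. The only difference is that the paper obtains the truncation by citing $H_n\le e^{2\gamma}$ whp from \refthm{very_small_sn_thm}, whereas you re-derive it directly from the minimum-edge-weight bound $W_n(k)\ge k\min_e X_e$ (which is precisely how that citation is proved), so the argument is the same in substance.
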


\begin{proof}
By definition $W_n=\min_{k \in \N}W_n(k)$. So the claim for $W_n$ follows from \refprop{prop:Wnk}(a) and the fact that $\min_{k \in \N}\gga(k) \le \gga(1)=1$. Since by \refthm{very_small_sn_thm}, $W_n \le 1+\eps$ for all $\eps>0$ and $H_n \le e^{2\gamma}$ with high probability, \refprop{prop:Wnk}(b) applied to $k \in \{\theta(\gamma)+1,\ldots, e^{2\gamma}\}$ yields the statement for the hopcount.
\end{proof}

\begin{proof}[Proof of \refthm{very_small_2nd_order} (a) and (b)]
As mentioned below (\ref{eq:defgg}), for $\gamma \in [0,2\log 2]$, the minimum of $\gga$ on $\N$ equals one, for $\gamma < 2\log 2$ we have $\theta(\gamma) =1$ and for $\gamma =2\log 2$ we have $\theta(\gamma)=2$. Hence, \refthm{thm:VsmallC1} yields the statements for the hopcount and that $W_n\overset{\P}{\longrightarrow} 1$. In particular, for $\gamma \in [0,2 \log 2)$ we have $W_n=X_{\{1,2\}}$ with high probability. Since $X_{\{1,2\}}\overset{d}{=}Z^{s_n}$ and
\begin{equation}\label{Hn1WeakConv}
s_n^{-1}(Z^{s_n}-1)= s_n^{-1} (e^{s_n \log Z}-1) \rightarrow \log Z \qquad \text{a.s.},
\end{equation}
the claim is established. 
\end{proof}

\refthm{very_small_2nd_order} reveals that for $\gamma <2 \log 2$, the distribution of $Z$ can be retrieved from the weak limit of a linear transformation of $W_n$. Hence, the entire distribution function plays a role. This is in contrast to the case $\gamma>2\log 2$ which we will now investigate. Here, only the behaviour of the distribution close to zero is relevant and a linear transformation of the $W_n$ converges to a Gumbel distribution. The Gumbel distribution usually arises in minimizations over independent random variables, and also here we will see that we minimize over almost 
independent paths of a fixed length. When the hopcount equals one (i.e.\ if $\gamma < 2 \log 2$), there is only one possible path and the optimization is trivial. This explains the different behaviour for the two regimes. However, we remark that when $Z$ is exponentially distributed, then $-\log Z$ still follows a Gumbel distribution.

In order to establish \refthm{very_small_2nd_order} for $\gamma >2 \log 2$, we assume from now on that $G \in C^2([0,\hat{x}])$, in order to have the asymptotics for the distribution function derived in (\ref{detailed_df_sn}) at our disposal. 

\begin{proof}[Proof of \refthm{very_small_2nd_order} (c) and (d)]
Since the proof follows the argument in \cite{BhaHofHoo12pre} very closely, we will only give a sketch. According to Remark \ref{lambda_one_remark} it suffices to prove the statement for $\lambda=1$. We denote
\[
W_n^{(\text{ind})}(k)=\min_{i \in [n^{k-1}]} Y_{i,k}
\]
with $Y_{1,k} \ldots Y_{n^{k-1},k}$ independent and identically distributed with distribution function $F_n^{*k}$. Then
\begin{equation}\label{WnkIndAsymp}
\P\Big( \frac{k}{\gga(k)} p_n\big[W_n^{(\text{ind})}(k)-\gga(k)\big] + (k-1) p_n \big(s_n \log n-\gamma +\frac{\log p_n}{2p_n} \big)>t \Big) \rightarrow e^{-a_k e^t}
\end{equation}
for all $k \ge 2$ with $\gga(k)<1$. Indeed, let
\begin{equation}\label{def:zknt}
z_{k,n}(t):=\gga(k) \Big(1+\frac{1}{k} \big[s_n t -(k-1) (s_n \log n -\gamma +\frac{\log p_n}{2p_n})\big]\Big)=\gga(k) (1+y_{k,n}(t)),
\end{equation}
where $y_{k,n}(t)=o(1)$. Then (\ref{WnkIndAsymp}) will follow once we show that
\begin{equation}\label{WnkInd:ts}
\P(W_n^{(\text{ind})}(k) >z_{k,n}(t) )=(1-F_n^{*k}(z_{k,n}(t)))^{n^{k-1}} \to e^{-a_k e^t}.
\end{equation}
Since $z_{k,n}(t) \to \gga(k)<1$,
\[
F_n^{*k}(z_{k,n}(t)) \sim a_k \Big(\frac{z_{k,n}(t)}{k}\Big)^{k p_n} p_n^{(k-1)/2}.
\]
In particular, (\ref{WnkInd:ts}) is equivalent to
\begin{equation}\label{meanPoisConv}
n^{k-1}F_n^{*k}(z_{k,n}(t)) \to a_k e^t.
\end{equation}
Taking logarithms and using the definition of $z_{k,n}(t)$ and $\log(1+y_{k,n}(t))=y_{k,n}(t) + O(y_{k,n}(t)^2)$, this is equivalent to
\[
p_n[s_n t +O(y_{k,n}(t)^2)]\to t,
\]
what is guaranteed by the assumption $\sqrt{\log n}(\gamma - s_n \log n)=0$.  We have therefore proved (\ref{WnkIndAsymp}).

In the next step we approximate the number of paths between vertex $1$ and $2$ of length $k$ and weight at most $z_{k,n}(t)$ by a Poisson random variable with the same mean. That is, we show that for
\[
\lambda_{k,n}(t)=\E[N_{k}(z_{k,n}(t))] = \prod_{j=2}^{k} (n-j) F_n^{*k}(z_{k,n}(t)) \sim a_k e^t,
\]
and $k \in \{\lfloor \gamma \rfloor, \lceil \gamma \rceil\}$ with $\gga(k)<1$, we have
\begin{equation}\label{PoisApproxTV1}
d_{\text{TV}}\big(N_k(z_{k,n}(t)), \text{Pois}(\lambda_{k,n}(t))\big) \rightarrow 0,
\end{equation}
where $d_{\text{TV}}$ denotes the total variation. In Proposition 4.4 of \cite{BhaHofHoo12pre} it is shown that given (\ref{meanPoisConv}) it suffices to check that $n^{2k-j-2} p_{j,k}^{(n)}(t) \to 0$ for all $1 \le j \le k-2$, where
\[
p_{k,j}^{(n)}(t) =\P\Big(\sum_{i=1}^k Z_i^{s_n} \le z_{k,n}(t), \sum_{i=1}^j Z_i^{s_n} + \sum_{i=j+1}^k \tilde{Z}_i^{s_n} \le z_{k,n}(t)\Big)
\]
and $Z_1,\ldots,Z_k,\tilde{Z}_1,\ldots \tilde{Z}_k$ are independent copies of $Z$. Let us first fix $z \in (0,1)$. Using (\ref{detailed_df_sn}) and Stirling's formula we find that
\begin{align*}
\P\Big(\sum_{i=1}^k Z_i^{s_n} \le z, \sum_{i=1}^j Z_i^{s_n} + \sum_{i=j+1}^k &\tilde{Z}_i^{s_n} \le z\Big) = \int_0^z F_n^{*(k-j)}(z-y)^2 \, dF_n^{*j}(y)\\
& \quad \sim \frac{\sqrt{2}\sqrt{2\pi}^{2k-j-2}}{\sqrt{k-j}\sqrt{2k-j}} \sqrt{p_n}^{2k-j-2} \Big(\frac{2^{2(k-j)}}{(2k-j)^{2k-j}}\Big)^{p_n}z^{(2k-j)p_n}.
\end{align*}
Thus, it remains to check that for all $j\in [k-2]$
\begin{align*}
&n^{2k-j-2} z_{k,n}(t)^{(2k-j)p_n} \Big(\frac{2^{2(k-j)}}{(2k-j)^{2k-j}}\Big)^{p_n} \sqrt{p_n}^{2k-j-2}\\
&\;=
\exp\Big( p_n \big[(2k-j-2)\gamma + (2k-j)\log \gga(k) + (k-j) \log 4- (2k-j) \log(2k-j) +o(1) \big]\Big)
\end{align*}
is a null sequence. Using the definition of $\gga$ and noting that $k \in \{\lfloor \gamma \rfloor, \lceil \gamma \rceil\}$, $k \ge 2$, one can check that $r(x)=(2k-x-2)\gamma + (2k-x)\log \gga(k) + (k-x) \log 4- (2k-x) \log(2k-x)$ is strictly decreasing on $[0,k-1]$. Since $r(0)=0$, we conclude that $r(1)<0$ and the proof of (\ref{PoisApproxTV1}) is complete.

As a consequence we obtain for $k \in \{\lfloor \gamma \rfloor, \lceil \gamma \rceil\}$ with $\gga(k)<1$,
\[
\P(W_n(k) >z_{k,n}(t))=\P(N_k(z_{k,n}(t))=0) \to e^{-a_k e^t}.
\]
Analysing $\gga$, we find that for $\gamma \ge 2$, $\gga(\lfloor \gamma \rfloor)$ and $\gga(\lceil \gamma \rceil)$ are both smaller than one. For $\gamma \in (2\log 2,2)$, $k(\gamma)=2$ and $\gga(2)<1$.\\
In particular, if $\gamma >2 \log 2$, $\gamma \not \in \Gamma$, $W_n(k(\gamma))$ converges to $\gga(k(\gamma))$ in probability. Since $\gga(k(\gamma)) < \gga(k)$ for all $k \not= k(\gamma)$, \refprop{prop:Wnk}(a) yields part (c) of \refthm{very_small_2nd_order}. If $\gamma = \gamma_k \in \Gamma$ for $k \ge 2$, then $W_n(k)$ and $W_n(k+1)$ both converge in probability to $g(k)=g(k+1)$ and \refprop{prop:Wnk}(a) shows that $W_n(l)$ is bounded from below by a strictly larger value for all $l \in \N\setminus \{k,k+1\}$. Thus, $W_n =\min\{W_n(k),W_n(k+1)\}$ with high probability and \refthm{very_small_2nd_order} (d) follows.
\end{proof}

\section{\texorpdfstring{A distribution with implicit $s_n$}{Distribution with implicit sn}}\label{alpha_sec}

In this section we consider the edge weights $X_e\overset{d}{=} e^{-(E/\rho)^{1/\alpha}}$ where $E$ is an exponential random variable with mean one and $\alpha>1$, $\rho>0$. The heuristics in \refsect{discussion} suggest that for $\alpha>2$ this edge weight distribution falls in the same universality class as $X_e=Z^{s_n}$ with $s_n \log n\to \infty$ and $s_n^2 \log n\to 0$. The implicit $s_n$ is given by $s_n=(\log n)^{-1+1/\alpha}/(\alpha \rho^{1/\alpha})$.

The proofs of this section follow the same lines as in \refsect{main_proofs_sec}, but with more technical difficulties 
because of the more complicated distribution function. We start with the required estimates. Since here the distribution does not depend on $n$, we write $F^{*k}$ instead of $F_n^{*k}$ for $k \in \N$.

\begin{lemma}\label{Fk_estimate_lem}
Let $\alpha >1$. There exists $\hat{x}=\hat{x}(\alpha,\rho)>0$ such that for all $k\in \mathbb{N}$ and $x\in (0, \hat{x})$
\[
\exp\big(-k\rho (\log \tfrac{k}{x})^{\alpha}\big) \le F^{*k}(x) \le (\rho \alpha e)^k (\log \tfrac{k}{x})^{k(\alpha-1)} \exp\big(-k\rho(\log \tfrac{k}{x})^{\alpha}\big).
\]
\end{lemma}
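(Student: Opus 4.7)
\textbf{Proof plan for Lemma \ref{Fk_estimate_lem}.}

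Since $X_e = e^{-(E/\rho)^{1/\alpha}}$ with $E \sim \mathrm{Exp}(1)$, the distribution function is $F(x) = \P(E \ge \rho(\log(1/x))^\alpha) = \exp(-\rho(\log(1/x))^\alpha)$ for $x \in (0,1)$, with density
\[
f(x) = \rho\alpha\, x^{-1}(\log(1/x))^{\alpha-1}\exp(-\rho(\log(1/x))^\alpha).
\]
The lower bound is then immediate from the first inequality in Proposition \ref{gen_rough_prop}: $F^{*k}(x) \ge F(x/k)^k = \exp(-k\rho(\log(k/x))^\alpha)$.

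For the upper bound, I will use the second inequality in Proposition \ref{gen_rough_prop} together with Lemma \ref{strict_convex_lem}, applied to $h(x) = -\log f(x)$. Once I verify that Lemma \ref{strict_convex_lem} applies on some $(0,\hat{x})$, I get $\max_{\sum x_i \le x} \prod_{i=1}^k f(x_i) = f(x/k)^k$, and then a direct substitution gives
\[
F^{*k}(x) \le \frac{x^k}{k!} f(x/k)^k = \frac{k^k}{k!}(\rho\alpha)^k (\log(k/x))^{k(\alpha-1)} \exp(-k\rho(\log(k/x))^\alpha),
\]
which, combined with the standard bound $k^k/k! \le e^k$, yields the claimed inequality.

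The only real work is thus verifying the hypotheses of Lemma \ref{strict_convex_lem} on a suitable interval $(0,\hat{x})$. Writing $u = \log(1/x)$ so that $h(x) = -\log(\rho\alpha) + \log x - (\alpha-1)\log u + \rho u^\alpha$, a direct differentiation gives
\[
h'(x) = \frac{1}{x}\Bigl(1 + \frac{\alpha-1}{u} - \rho\alpha u^{\alpha-1}\Bigr),\qquad
h''(x) = \frac{1}{x^2}\Bigl(\rho\alpha u^{\alpha-1} + \rho\alpha(\alpha-1)u^{\alpha-2} - 1 - \frac{\alpha-1}{u} + \frac{\alpha-1}{u^2}\Bigr).
\]
Since $\alpha > 1$, the term $\rho\alpha u^{\alpha-1}$ dominates both expressions as $u \to \infty$, i.e.\ as $x \to 0^+$. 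So for $\hat{x} = \hat{x}(\alpha,\rho) > 0$ sufficiently small, $h$ is $C^2$ and strictly convex on $(0,\hat{x})$, with $h'(x) < 0$ throughout (no stationary point) and $h(x) \to \infty$ as $x \downarrow 0$; thus Lemma \ref{strict_convex_lem} applies with $\bar{x} = x^* = \hat{x}$, giving the maximum identity for every $x \in (0,\hat{x})$ and every $k \in \N$ (the constraint $\sum x_i \le x < \hat{x}$ keeps each $x_i$ inside the good interval).

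The main obstacle is just making sure the sign calculations in $h'$ and $h''$ are carried out carefully enough to extract an explicit, $k$-uniform $\hat{x}$ depending only on $\alpha$ and $\rho$; once that is done the rest is algebraic bookkeeping. Note that the choice of $\hat{x}$ need not depend on $k$ because the relevant inputs $x_i$ are bounded by $x$, which is itself bounded by $\hat{x}$.
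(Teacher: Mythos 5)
Your proposal is correct and takes essentially the same route as the paper: the lower bound comes directly from Proposition~\ref{gen_rough_prop}, and the upper bound from Lemma~\ref{strict_convex_lem} applied to $h=-\log f$ together with $k^k/k!\le e^k$. The only cosmetic difference is in verifying strict convexity and monotonicity of $h$ near $0$: the paper writes $h=b\circ\vartheta$ with $b(y)=\rho y^{\alpha}-y-(\alpha-1)\log y$ and $\vartheta(x)=\log\tfrac1x$, obtaining the explicit threshold $\hat x=e^{-\tilde x}$, whereas you compute $h'$ and $h''$ directly and take $\hat x$ small enough so that the dominant term $\rho\alpha(\log\tfrac1x)^{\alpha-1}$ forces $h''>0$ and $h'<0$; both arguments are valid and $k$-uniform.
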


\begin{proof}
Notice that for $x\in (0,1)$
\[
F(x)=\P(e^{-(E/\rho)^{1/\alpha}} \le x)=\P(E\ge \rho (\log \tfrac{1}{x})^{\alpha})=\exp(-\rho (\log \tfrac{1}{x})^{\alpha}).
\]
Hence, the lower bound follows immediately from the lower bound in \refprop{gen_rough_prop}. For the upper bound, compute the Lebesgue density $f(x)$ of the distribution $F$:
\begin{equation}
f(x)=\frac{dF}{dx}(x)= \frac{\rho \alpha}{x} (\log \tfrac{1}{x})^{\alpha-1} \exp(-\rho( \log \tfrac{1}{x})^{\alpha}) \qquad \forall \, x \in (0,1).
\end{equation}
The constant factor $\rho \alpha$ can be ignored for the optimization. Hence, we choose $h(x):=\rho (\log \frac{1}{x})^{\alpha}-\log \frac{1}{x}-(\alpha-1)\log \log \frac{1}{x} $ and $\vartheta(x):=\log \frac{1}{x}=-\log x$ for $x \in (0,1)$ and $b(x):=\rho x^{\alpha}-x-(\alpha-1)\log x$ for $x \in (0,\infty)$. Then $h=b \circ \vartheta$ satisfies the assumptions of \reflemma{strict_convex_lem} with any $\bar{x}\in (0,1]$ for which we prove convexity in $(0,\bar{x})$. The assumption $\alpha >1$ implies that
\[
b'(x)=\rho \alpha x^{\alpha-1}-1-(\alpha-1)/x
\]
is strictly increasing, $\lim_{x \downarrow 0}b'(x)=-\infty$ and $\lim_{x\to \infty}b'(x)=\infty$. Hence, there exists a unique $\tilde{x}=\tilde{x}(\alpha, \rho)>0$ such that $b'(\tilde{x})=0$. Thus, $b$ is convex and strictly increasing on $[\tilde{x},\infty)$. As $\vartheta$ is strictly convex and $\vartheta(x)\ge \tilde{x}$ for $x \in (0,\vartheta^{-1}(\tilde{x})]$, the composition $b \circ \vartheta = h$ is strictly convex on this interval. Moreover, the unique stationary point of $h$ is $x^*=\vartheta^{-1}(\tilde{x})=e^{-\tilde{x}}$. As a result, \refprop{gen_rough_prop} and \reflemma{strict_convex_lem} with $\bar{x}=x^*$ yields that for all $x\in (0,\bar{x})$
\begin{align*}
F^{*k}(x)&\le \frac{x^k}{k!}f(x/k)^k =\frac{x^k}{k!} \Big[\frac{\rho \alpha k}{x} (\log \tfrac{k}{x})^{\alpha-1}\exp(-\rho (\log \tfrac{k}{x})^{\alpha})\Big]^k\\
&= \frac{(\rho \alpha k)^k}{k!} (\log \tfrac{k}{x})^{k(\alpha-1)} \exp(-k\rho (\log \tfrac{k}{x})^{\alpha}).
\end{align*}
By Stirling's estimate $k!\ge e^{-k}k^k$. Now the claim is established for $\hat{x}=\bar{x}$.
\end{proof}

For compact notation and easier comparison, we denote
\[
s_n:= \frac{(\log n)^{-1+1/\alpha}}{\alpha \rho^{1/\alpha}}, \quad k_n^-:=\lfloor s_n \log n\rfloor, \quad  k_n^+:=\lceil s_n \log n \rceil.
\]
In particular, we have
\begin{equation}\label{conversion_eq}
u_n=e^{-\alpha s_n \log n}, \quad \rho (\alpha s_n \log n)^\alpha =\log n \quad \text{and} \quad k_n =O((\log n)^{1/\alpha}).
\end{equation}
Statement (\ref{weight_alpha_1}) of \refthm{weight_a_thm} will follow from the upper and lower bounds established in Theorems \ref{upper_a_thm} and \ref{lower_a_thm}.

\begin{theorem}\label{upper_a_thm}{\bf{(Upper bound for $W_n$ in the $e^{-(E/\rho)^{1/\alpha}}$ case)}}
Let $X_e\overset{d}{=}e^{-(E/\rho)^{1/\alpha}}$, $\rho>0$ and $\alpha>2$. Then
\[
W_n \le (1+\epsilon_n) ek_n^+u_n  \qquad \text{with high probability}
\]
for every sequence $(\eps_n)_{n \in \N}\in (0,\infty)^{\N}$ which satisfies the conditions $(\log n)^{-1+2/\alpha} \log \log n=o(\eps_n)$ and $(\log n)^{-1/\alpha} (\log \log n)^3=o(\eps_n)$.
\end{theorem}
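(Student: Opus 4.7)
The plan is to apply Proposition \ref{gen_upper_prop} with $k_n = k_n^+ = \lceil s_n \log n\rceil$ and $b_n = (1+\eps_n) e k_n^+ u_n$, in direct analogy with the proof of Theorem \ref{upper_sn_thm}. The sharp two-sided asymptotic of Lemma \ref{asymp_sn_lem} is no longer available here; the moment estimates must instead rely on the cruder two-sided bounds of Lemma \ref{Fk_estimate_lem}, and the gap between its lower and upper bounds is exactly what the two hypotheses on $\eps_n$ must absorb.

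For the first moment, I would use $F^{*k_n}(b_n) \ge \exp(-k_n\rho(\log(k_n/b_n))^\alpha)$ together with the identities $u_n = e^{-\alpha s_n\log n}$ and $\rho(\alpha s_n\log n)^\alpha = \log n$. Writing $L := \alpha s_n\log n$ and $c_n := 1+\log(1+\eps_n)$, one has $\log(k_n/b_n) = L - c_n$, so $k_n\rho(\log(k_n/b_n))^\alpha = k_n\log n\cdot(1-c_n/L)^\alpha$. For $\alpha > 2$ the inequality $(1-x)^\alpha \le 1 - \alpha x + \binom{\alpha}{2}x^2$ on $(0,1)$ follows from Taylor's theorem with Lagrange remainder, since the third derivative of $(1-x)^\alpha$ is negative; substituting yields
\[
\log\E[N_{k_n}(b_n)] \ge \log n\cdot\log(1+\eps_n) - \frac{\alpha-1}{2\alpha s_n}(1+o(1)).
\]
Since $\log n\cdot\log(1+\eps_n) \sim \eps_n\log n$ and $1/s_n \asymp (\log n)^{1-1/\alpha}$, the mean diverges as soon as $\eps_n s_n\log n\to\infty$, i.e.\ $\eps_n\gg(\log n)^{-1/\alpha}$, which is amply supplied by the second assumption $(\log n)^{-1/\alpha}(\log\log n)^3 = o(\eps_n)$.

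For the variance condition I would combine the upper bound on $F^{*(k_n-l)}(b_n)$ and the lower bound on $F^{*k_n}(b_n)$ from Lemma \ref{Fk_estimate_lem} to get
\[
\frac{F^{*(k_n-l)}(b_n)}{F^{*k_n}(b_n)} \le (\rho\alpha e)^{k_n-l}(\log((k_n-l)/b_n))^{(k_n-l)(\alpha-1)}\exp\bigl(h(k_n) - h(k_n-l)\bigr),
\]
where $h(k) := k\rho(\log(k/b_n))^\alpha$. A direct computation gives $h'(k) = \rho(\log(k/b_n))^{\alpha-1}(\log(k/b_n)+\alpha)$ and $h''(k) = O(1/(ks_n))$ near $k_n$, and expanding one finds $h'(k_n) = \log n - \log(1+\eps_n)/s_n + o(1/s_n)$. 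A second-order Taylor expansion then shows that after multiplying by $n^{-l}$ (which cancels the linear $l\log n$ contribution) the log of the $l$-th summand of the variance series is at most
\[
O(s_n\log n\log\log n) - l\,\frac{\log(1+\eps_n)}{s_n}(1+o(1)) - \frac{l^2}{2k_n s_n}(1+o(1)),
\]
where the $O(s_n\log n\log\log n)$ comes from the polynomial prefactor $(\rho\alpha e)^{k_n-l}(\log((k_n-l)/b_n))^{(k_n-l)(\alpha-1)}$ via $\log\log((k_n-l)/b_n) \sim \log\log n$. The first hypothesis $s_n^2\log n\log\log n = o(\eps_n)$ is precisely what makes this prefactor cost absorbable against the gain $\eps_n/s_n$ from the $l=1$ term; summing the resulting geometric-type series then yields $o(1)$.

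The main obstacle is the polynomial prefactor of Lemma \ref{Fk_estimate_lem}, which contributes the new loss $\exp(O(s_n\log n\log\log n))$ to the variance ratio and has no counterpart in the proof of Theorem \ref{upper_sn_thm}. The two conditions on $\eps_n$ are calibrated to the two error sources: the first absorbs the prefactor against the variance gain $\eps_n p_n$, while the second provides divergence margin for the mean to handle the higher-order corrections in the Taylor expansion of $(1-c_n/L)^\alpha$. The hypothesis $\alpha>2$ enters through $s_n^2\log n\to 0$, which ensures that all cubic and higher-order terms in the Taylor expansions of both $(1-c_n/L)^\alpha$ and $h(k_n) - h(k_n-l)$ are of size $o(1/s_n)$ and therefore do not spoil the bounds.
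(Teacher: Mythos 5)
The proposal is correct and takes essentially the same approach as the paper's proof: both apply Proposition~\ref{gen_upper_prop} with $k_n=k_n^+$ and $b_n=(1+\eps_n)ek_n^+u_n$, and both rely on the two-sided bounds of Lemma~\ref{Fk_estimate_lem} to estimate the first and second moments. The only difference is organizational --- you Taylor-expand $h(k)=k\rho(\log(k/b_n))^\alpha$ in $k$ with a Lagrange remainder, whereas the paper decomposes the exponent algebraically and expands the $(\cdot)^\alpha$ factors, reaching the same quadratic-in-$l$ structure via the function $\psi$ of Lemma~\ref{psi_lemma}.
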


Note that the first condition for $\eps_n$ implies the second if $\alpha <3$ and the second implies the first when $\alpha \ge 3$. Both conditions hold for a constant $\eps_n =\eps$ if $\alpha>2$. For the proof we write $k_n :=k_n^+$ and $b_n := (1+\eps_n)e k_n u_n$.

\begin{proof}[Proof of \refthm{upper_a_thm}] Without loss of generality, assume that $(\eps_n)$ is a null sequence. To apply \refprop{gen_upper_prop}, we first show the divergence of $\E[N_{k_n}(b_n)]$ to infinity. By (\ref{mean_estimate}) and \reflemma{Fk_estimate_lem}, for all sufficiently large $n$
\begin{align*}
\E[N_{k_n}(b_n)]&\ge (1+o(1)) n^{k_n-1} \exp\big(- k_n \rho(\log \tfrac{k_n}{b_n})^{\alpha}\big)\\
&=(1+o(1))\exp\big((k_n-1)\log n-k_n \rho\big(\alpha s_n \log n-\log((1+\eps_n)e)\big)^{\alpha}\big).
\end{align*}
Using a Taylor expansion and (\ref{conversion_eq}), the exponent equals
\begin{align*}
&(k_n-1) \log n - k_n\rho (\alpha s_n \log n)^{\alpha} \Big[1-\frac{\log((1+\eps_n)e)}{s_n \log n}+O((\log n)^{-2/\alpha}) \Big]\\
&\;=\log n \big[-1+ \frac{k_n}{s_n \log n} \log(e(1+\eps_n)) +O((\log n)^{-1/\alpha})\big]\ge \eps_n \log n \big[1+o(1)+O(\eps_n^{-1} (\log n)^{-1/\alpha})\big].
\end{align*}
For the inequality we used that $k_n=k_n^+$. The second condition on $(\eps_n)$ implies that $\eps_n^{-1} (\log n)^{-1/\alpha}=o(1)$ and $\eps_n \log n \to \infty$ for $n \to \infty$. Hence, $\E[N_{k_n}(b_n)]\to \infty$. For the variance bound in \refprop{gen_upper_prop}, we use \reflemma{Fk_estimate_lem} twice to estimate
\[
\frac{F^{*(k_n-l)}(b_n)}{F^{*k_n}(b_n)}\le \frac{d_l}{l+1}\exp\left(-(k_n-l) \rho(\log \tfrac{k_n-l}{b_n})^{\alpha}+k_n \rho(\log \tfrac{k_n}{b_n})^{\alpha}\right),
\]
where $d_l:=d(\alpha,b_n,k_n,l):= (l+1)(\rho \alpha e)^{k_n-l}(\log \frac{k_n-l}{b_n})^{(k_n-l)(\alpha-1)}$. Hence, it is sufficient to show that the following sequence tends to zero:
\begin{equation}\label{sum_to_zero}
\sum_{l=1}^{k_n-2} d_l\exp\left((k_n-l)\rho [(\log \tfrac{k_n}{b_n})^{\alpha}-(\log \tfrac{k_n-l}{b_n})^{\alpha}]+l\rho[(\log \tfrac{k_n}{b_n})^{\alpha}-\rho^{-1}\log n]\right).
\end{equation}
We first derive asymptotics for the expression in the exponent. A Taylor expansion and (\ref{conversion_eq}) yield
\begin{align*}
&(\log \tfrac{k_n-l}{b_n})^{\alpha}=(\log \tfrac{k_n}{b_n})^{\alpha}\Big(1-\frac{\log \frac{k_n}{k_n-l}}{\log \frac{k_n}{b_n}}\Big)^{\alpha}\\
&\phantom{(\log \tfrac{k_n-l}{b_n})^{\alpha}}= (\log \tfrac{k_n}{b_n})^{\alpha}\Big[1-\alpha \frac{\log \frac{k_n}{k_n-l}}{\log \frac{k_n}{b_n} }+\tfrac{1}{2}\alpha(\alpha-1) \left(\frac{\log \frac{k_n}{k_n-l}}{\log \frac{k_n}{b_n}}\right)^2+O\left(\left(\frac{\log \frac{k_n}{k_n-l}}{\log \frac{k_n}{b_n}}\right)^3\right)\Big]\\
&\rho^{-1}\log n=(\log(\tfrac{k_n}{b_n}e(1+\eps_n)))^{\alpha}\\
&\phantom{\rho^{-1} \log n}= (\log\tfrac{k_n}{b_n})^{\alpha} \Big[1+\alpha \frac{\log(e(1+\eps_n))}{\log \tfrac{k_n}{b_n}}+\tfrac{1}{2}\alpha(\alpha-1)\left(\frac{\log(e(1+\eps_n))}{\log \tfrac{k_n}{b_n}}\right)^2+O((\log \tfrac{k_n}{b_n})^{-3})\Big].
\end{align*}
Notice that in particular, $\log \frac{k_n}{b_n}=O((\log n)^{1/\alpha})$. Since the second order terms have a positive coefficient in both cases, we can bound the exponent in (\ref{sum_to_zero}) by
\begin{align*}
&\rho \alpha (\log \tfrac{k_n}{b_n})^{\alpha-1} \Big[ (k_n-l) \Big(\log \tfrac{k_n}{k_n-l}+O((\log k_n)^3 (\log \tfrac{k_n}{b_n})^{-2})\Big)- l \Big(\log(e (1+\eps_n)) +O((\log \tfrac{k_n}{b_n})^{-2})\Big)\Big]\\
&\quad=\rho \alpha (\log \tfrac{k_n}{b_n})^{\alpha-1} l \Big[ (\tfrac{k_n}{l}-1) \log \tfrac{k_n}{k_n-l} - \log(e (1+\eps_n)) + O((\tfrac{k_n}{l}-1)(\log k_n)^3 (\log \tfrac{k_n}{b_n})^{-2}) \Big].
\end{align*}
By (\ref{logpn_control}), (\ref{conversion_eq}) and the second assumption on $(\eps_n)$,
\[
O\big((\tfrac{k_n}{l}-1)(\log k_n)^3 (\log \tfrac{k_n}{b_n})^{-2}\big)= O\big(k_n(\log \log n)^3 (\log n)^{-2/\alpha}\big)=o(\eps_n).
\]
Hence, the second half of the bracket equals $-\eps_n (1+o(1))$; the first part equals $\psi(l/k_n)-1$, where $\psi(q)= (\frac{1}{q} -1)\log (1-q)^{-1}$. According to \reflemma{psi_lemma}, $\psi$ is bounded by one. Using the definition of $d_l$, we can estimate
\begin{align*}
\sum_{l=1}^{k_n-2} d_l &\exp\Big(-\rho \alpha (\log \tfrac{k_n}{b_n})^{\alpha -1} l \eps_n (1+o(1))\Big)\\
&\le \sum_{l=1}^{k_n-2} (l+1)\exp\Big(-l \Big[ \rho \alpha (\log \tfrac{k_n}{b_n})^{\alpha-1}\eps_n(1+o(1)) + k_n \log(\rho \alpha e)+k_n(\alpha-1) \log \log \tfrac{k_n}{b_n}\Big]\Big).
\end{align*}
The two last terms in the exponent are of order $O((\log n)^{1/\alpha}\log \log n)=o(\eps_n (\log \frac{k_n}{b_n})^{\alpha-1})$ by the first assumption on $(\eps_n)$. Since $\eps_n (\log \tfrac{k_n}{b_n})^{\alpha-1}\to \infty$, the sum converges to zero. Thus, both conditions of \refprop{gen_upper_prop} are satisfied and the claim follows.
\end{proof}

Recall that $k_n^- = \lfloor s_n \log n\rfloor$. For the lower bound it is sufficient to have $\alpha>1$:

\begin{theorem}\label{lower_a_thm}{\bf{(Lower bound for $W_n$ in the $e^{-(E/\rho)^{1/\alpha}}$ case)}}
Let $X_e\overset{d}{=}e^{-(E/\rho)^{1/\alpha}}$, $\rho>0$ and $\alpha>1$. Then
\[
 W_n \ge (1-\eps_n)ek_n^-u_n  \qquad \text{with high probability,}
\]
for every sequence $(\eps_n)_{n \in \N} \in (0,\infty)^{\N}$ which satisfies the conditions $(\log n)^{-1+1/\alpha} \log \log n =o(\eps_n)$ and $(\log n)^{-1/\alpha} (\log \log n)^2=o(\eps_n)$.
\end{theorem}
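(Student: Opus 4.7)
The plan is to apply \refprop{lower_gen_prop} with $d_n=(1-\eps_n)ek_n^-u_n$: it will suffice to show $\sum_{k=1}^{n-1}n^{k-1}F^{*k}(d_n)\to 0$. Since $d_n\to 0$, \reflemma{Fk_estimate_lem} applies for large $n$, reducing the task to bounding $\sum_{k=1}^{n-1}e^{A(k)}$ for
\[
A(k):=(k-1)\log n+k\log(\rho\alpha e)+k(\alpha-1)\log\log\tfrac{k}{d_n}-k\rho\bigl(\log\tfrac{k}{d_n}\bigr)^{\alpha}.
\]
The key move is to exploit the identities $\rho T^{\alpha}=\log n$ and $\alpha\rho T^{\alpha-1}=p_n:=1/s_n$, where $T=\alpha s_n\log n$ (cf.\ (\ref{conversion_eq})). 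Since $u_n=e^{-T}$, one has $\log(k/d_n)=T+\log(k/k_n^-)-\log((1-\eps_n)e)$, and the convexity inequality $(T+y)^{\alpha}\ge T^{\alpha}+\alpha T^{\alpha-1}y$, valid for $\alpha\ge 1$ and $T+y\ge 0$, yields
\[
\rho\bigl(\log\tfrac{k}{d_n}\bigr)^{\alpha}\ge\log n+p_n\bigl(\log\tfrac{k}{k_n^-}-1-\log(1-\eps_n)\bigr).
\]

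Introducing $\eta=k/k_n^-$ and using $k_n^-p_n=\log n\,(1+O((\log n)^{-1/\alpha}))$ together with $\log(1-\eps_n)\le-\eps_n$, this substitution should yield
\[
A(k)\le-\log n\cdot g(\eta)-kp_n\eps_n(1+o(1))+E(k),
\]
where $g(\eta):=1-\eta+\eta\log\eta$ is convex and nonnegative with unique zero at $\eta=1$ (as $g(1)=g'(1)=0$ and $g''(\eta)=1/\eta>0$). The residual $E(k)$ collects two contributions: the prefactor term $k[\log(\rho\alpha e)+(\alpha-1)\log\log(k/d_n)]=O(k\log\log n)=O(\eta(\log n)^{1/\alpha}\log\log n)$, and the discrepancy $\eta(k_n^-p_n-\log n)(1-\log\eta)=O(\eta(\log n)^{1-1/\alpha}(1+|\log\eta|))$. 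The former is $o(\eta\log n\cdot\eps_n)$ precisely under the first condition $(\log n)^{-1+1/\alpha}\log\log n=o(\eps_n)$, while in the range where $|\log\eta|$ may reach $O(\log\log n)$ (that is, for small $k$), the latter is $o(\eta\log n\cdot\eps_n)$ under the second condition $(\log n)^{-1/\alpha}(\log\log n)^2=o(\eps_n)$; an analogous Taylor remainder in $\log\log(k/d_n)$ is controlled by the same condition.

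Finally, I would control the sum by a Gaussian-type analysis near $\eta=1$. There $g(\eta)+\eta\eps_n$ is minimized at $\eta\approx 1-\eps_n$ with minimum value $\eps_n(1+o(1))$; using the quadratic expansion $g(\eta)\sim(\eta-1)^2/2$ and the step size $\Delta\eta=1/k_n^-$, the contribution from the neighborhood of $\eta=1$ is of order $(k_n^-/\sqrt{\log n})\,n^{-\eps_n}=O((\log n)^{1/\alpha-1/2})\,n^{-\eps_n}=o(1)$, since $\eps_n\log n\to\infty$ is forced by the first condition. Away from $\eta=1$, $g(\eta)$ is bounded below by a positive constant and the exponent is at most $-c\log n$, giving an exponentially small contribution. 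The main obstacle will be the careful, uniform-in-$k$ bookkeeping of the error $E(k)$: in particular, handling simultaneously the regimes where $g(\eta)$ is small (demanding the $\eps_n$ contribution to do all the work) and where $|\log\eta|$ is large (where the discrepancy $k_n^-p_n-\log n$ couples to $\log\eta$), and confirming that both stated conditions on $\eps_n$ are needed to close the argument via \refprop{lower_gen_prop}.
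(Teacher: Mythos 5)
Your proposal is correct in outline and takes a genuinely different route from the paper's proof. You reach the same starting point (apply \refprop{lower_gen_prop} with $d_n=(1-\eps_n)ek_n^-u_n$ and invoke \reflemma{Fk_estimate_lem}), but then the technical core diverges. The paper expands $(\log\tfrac{k}{d_n})^{\alpha}$ by a two-term Taylor formula around $(\alpha s_n\log n)^{\alpha}$, splits the sum at $\kappa_n=\lceil 2ek_n^-\rceil$, and for small $k$ reuses the $Z^{s_n}$-case identity~(\ref{exponent_lower_bound}) to rewrite the exponent via $\varphi(x)=-x+\log x$. You instead use the tangent-line inequality $(T+y)^{\alpha}\ge T^{\alpha}+\alpha T^{\alpha-1}y$ (convexity, valid since $\alpha>1$ and $\log(k/d_n)>0$ for $n$ large) to bound $-k\rho(\log\tfrac{k}{d_n})^{\alpha}$ by a clean affine expression in $\log\eta$, reorganize around $g(\eta)=1-\eta+\eta\log\eta$, and run a Laplace-type analysis near $\eta=1$. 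The two pictures are dual: $\varphi(1/\eta)+1=-g(\eta)/\eta$, so the structure of the argument is the same, but your version is inequality-based rather than expansion-based.

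What each approach buys: the paper's version explicitly reuses the $Z^{s_n}$ computations, which streamlines the presentation and makes the universality visible. Your version avoids the quadratic Taylor remainder in the expansion of $(\log(k/d_n))^{\alpha}$ — that remainder is $O((\log\log n)^2(\log n)^{-2/\alpha})$ for $k=O(k_n^-)$ and is precisely what forces the $(\log\log n)^2$ in the paper's second hypothesis on $\eps_n$. Tracing your error terms, the prefactor requires $(\log n)^{-1+1/\alpha}\log\log n=o(\eps_n)$ (the first hypothesis, exactly as you say), while the discrepancy $\eta(k_n^-p_n-\log n)(1-\log\eta)$, with $|k_n^-p_n-\log n|=O(p_n)=O((\log n)^{1-1/\alpha})$ and $|\log\eta|=O(\log\log n)$, requires only $(\log n)^{-1/\alpha}\log\log n=o(\eps_n)$. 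So your argument in fact needs a weaker second condition than the one stated, and a fortiori proves the theorem as stated. You are right that the remaining work is the uniform-in-$k$ bookkeeping of $E(k)$; your outline (Gaussian window near $\eta=1$ of width $O(1/\sqrt{\log n})$ contributing $O(k_n^-(\log n)^{-1/2}n^{-\eps_n(1+o(1))})=o(1)$, geometric decay away from $\eta=1$ since $g$ is bounded below there and everything else is $o(\eta\log n)$) is sound, and the "analogous Taylor remainder in $\log\log(k/d_n)$" you mention is in fact unnecessary, since $\log\log(k/d_n)=O(\log\log n)$ can be used as a crude uniform bound throughout.
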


Note that in the case $\alpha \in (1,2)$ the first assumption is stronger and in the case $\alpha \ge 2$ the second. The first condition agrees with the condition found for edge weights $X_e\overset{d}{=}Z^{s_n}$, the second one is needed to estimate the distribution function. In the proof we write $d_n=(1-\eps_n)ek_n^-u_n$.

\begin{proof}[Proof of \refthm{lower_a_thm}]
Without loss of generality, assume that $(\eps_n)$ is a null sequence. We check the criterion of \refprop{lower_gen_prop}. By \reflemma{Fk_estimate_lem},
\[
\sum_{k=1}^{n-1} n^{k-1} F^{*k}(d_n)\le \sum_{k=1}^{n-1} (\rho \alpha e)^k (\log \tfrac{k}{d_n})^{k(\alpha-1)}\exp(-k\rho (\log \tfrac{k}{d_n})^{\alpha}+(k-1)\log n).
\]
For the factor $(\log \frac{k}{d_n})^{k(\alpha-1)}$ we use the rough bound $(2 \log n)^{k \alpha}$. The idea for showing that the sum vanishes is to consider small and large $k$ separately. Let $\kappa_n:=\lceil 2e k_n^-\rceil$. We start with $k\in [\kappa_n]$. This allows us to use similar asymptotics as in the proof of the upper bound. A Taylor expansion and the second assumption on $(\eps_n)$ yield
\begin{align*}
(\log \tfrac{k}{d_n})^{\alpha}&=(\alpha s_n \log n)^{\alpha} \Big[1- \frac{\log(\frac{k_n^-}{k} (1-\eps_n)e)}{s_n \log n} + o((\log n)^{-1/\alpha}\eps_n)\Big]\\
&= -\rho^{-1}\log n \frac{1}{s_n \log n} \Big[\log\big(\tfrac{k_n^-}{k}(1-\eps_n)e n^{-s_n}\big)+o(\eps_n)\Big].
\end{align*}
We can now take advantage of our computations for the $Z^{s_n}$ case. By (\ref{conversion_eq}) and (\ref{exponent_lower_bound}) without the error term,
\begin{equation}\label{exp_exp_eq}
-k \rho (\log \tfrac{k}{d_n})^{\alpha}+(k-1)\log n =kp_n \big[\varphi(\tfrac{s_n \log n}{k})+1+ \log(1-\eps_n) + \log \tfrac{k_n^-}{s_n \log n} +o(\eps_n)\big].
\end{equation}
Since $\varphi(x)= -x+\log x \le-1$, we obtain
\[
\sum_{k=1}^{\kappa_n} n^{k-1} F^{*k}(d_n) \le \sum_{k=1}^{\infty} \exp\left(-k \Big[ \eps_n p_n (1+o(1)) - \log(\rho \alpha e)- \alpha \log(2 \log n)\Big]\right).
\]
By the first assumption on $(\eps_n)$ the exponent equals $-k \eps_n p_n (1+o(1))$ and $\eps_n p_n \to \infty$ for $n \to \infty$. This shows that the sum over $k\leq \kappa_n$ vanishes. In the next step we consider $k>\kappa_n$:
\[
(\log \tfrac{k}{d_n})^{\alpha}\ge \log(2e^{\alpha s_n \log n})^{\alpha} = (\alpha s_n \log n)^{\alpha} \Big[1+ \frac{\log 2}{s_n \log n} + O((\log n)^{-2/\alpha})\Big].
\]
Thus, the term in the exponent turns into
\[
-k\rho (\log \tfrac{k}{d_n})^{\alpha}+(k-1)\log n\le -k p_n [\log 2 +o(1)].
\]
Since $\log(\rho \alpha e)+\alpha \log(2 \log n) =o(p_n)$, we derive
\[
\sum_{k=\kappa_n+1}^{n-1} n^{k-1} F^{*k}(d_n)\le \sum_{k=1}^{\infty} \exp\big(-kp_n [\log 2+o(1)]\big)=o(1).
\]
The claim now follows from \refprop{lower_gen_prop}.
\end{proof}

We turn to the hopcount. Statement (\ref{hop_alpha_thm}) in \refthm{weight_a_thm} is an immediate consequence of the following result:

\begin{theorem}{\bf{(Hopcount in the $e^{-(E/\rho)^{1/\alpha}}$ case)}}
Let $X_e \overset{d}{=} e^{-(E/\rho)^{1/\alpha}}$ for $\rho >0$, $\alpha >2$. Then
\[
\frac{1}{1+\beta_n}s_n \log n< H_n < \frac{1}{1-\beta_n}s_n \log n \qquad \text{with high probability}
\]
for all $(\beta_n)_{n \in \N}\in(0,\infty)^{\N}$ with $(\log n)^{-1/\alpha} (\log \log n)^4=o(\beta_n^2)$ and $(\log n)^{-1+2/\alpha}(\log \log n)^2 =o(\beta_n^2)$ for $n \to \infty$.
\end{theorem}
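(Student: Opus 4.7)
The plan is to mirror the structure of the proof of Theorem~\ref{hop_sn_thm}, substituting the distributional estimate of Lemma~\ref{Fk_estimate_lem} for that of Lemma~\ref{rough_sn_lem}, and borrowing the Taylor-expansion bookkeeping already performed in the proof of Theorem~\ref{upper_a_thm}.

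First I would choose $\eps_n := \beta_n^2/\log\log n$. This choice is $o(\beta_n^2)$, and under the present hypotheses on $\beta_n$ it exactly meets the two conditions required of $\eps_n$ in Theorem~\ref{upper_a_thm}. Invoking that theorem gives $W_n \le b_n := (1+\eps_n) e k_n^+ u_n$ with high probability. Writing $h_n^- := \lfloor s_n\log n/(1+\beta_n)\rfloor$ and $h_n^+ := \lceil s_n\log n/(1-\beta_n)\rceil$, the argument leading to (\ref{hopcount_lower_gen}) reduces the theorem to proving
\[
\sum_{k=1}^{h_n^-} \E[N_k(b_n)] = o(1) \qquad \text{and} \qquad \sum_{k=h_n^+}^{n-1} \E[N_k(b_n)] = o(1).
\]

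Next, for $1 \le k \le 2ek_n^+$, I would feed Lemma~\ref{Fk_estimate_lem} into the first-moment bound $\E[N_k(b_n)] \le n^{k-1}F^{*k}(b_n)$ and repeat the Taylor expansion of $(\log(k/b_n))^\alpha$ around $\alpha s_n\log n$ carried out in the proof of Theorem~\ref{upper_a_thm}, with $1+\eps_n$ and $k_n^+$ replacing $1-\eps_n$ and $k_n^-$. This converts the exponent into
\[
\log\E[N_k(b_n)] \le kp_n\bigl[\varphi(s_n\log n/k) + 1 + r_{n,k}\bigr],
\]
where $\varphi(x) = -x + \log x$ and the remainder $r_{n,k}$ aggregates $\log(1+\eps_n)$, $\log(k_n^+/(s_n\log n)) = O((\log n)^{-1/\alpha})$, the quadratic and cubic Taylor corrections to $(\log(k/b_n))^\alpha$, and the contribution $O((\log n)^{-1+1/\alpha}\log\log n)$ from the polynomial prefactor $(\rho\alpha e)^k(\log(k/b_n))^{k(\alpha-1)}$. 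The strict concavity of $\varphi$, together with $\varphi(1\pm\beta_n)+1 = -\tfrac{1}{2}\beta_n^2(1+o(1))$, then gives $\E[N_k(b_n)] \le \exp(-\tfrac{1}{2}kp_n\beta_n^2(1+o(1)))$ uniformly for $k \le h_n^-$ and for $h_n^+ \le k \le 2ek_n^+$; since $p_n\beta_n^2 \to \infty$, both geometric series sum to $o(1)$. For the very large range $k > 2ek_n^+$ in the upper-bound sum, I would replace the Taylor expansion by the coarse lower bound $\log(k/b_n) \ge \alpha s_n\log n + \log 2$, exactly as in the $k > \kappa_n$ analysis in the proof of Theorem~\ref{lower_a_thm}; the exponent becomes $-kp_n[\log 2 + o(1)]$, which swamps the polynomial prefactor because $\log\log n = o(p_n)$ for $\alpha > 1$.

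The main obstacle is the error bookkeeping in the Taylor step: one must verify that every component of $r_{n,k}$ — most delicately the quadratic Taylor term, of size $O((\log n)^{-1/\alpha}(\log\log n)^2)$ when $k$ is of order unity, and the polynomial-prefactor contribution $O((\log n)^{-1+1/\alpha}\log\log n)$ — is genuinely $o(\beta_n^2)$ uniformly in $k$. The two $(\log\log n)^j$ factors in the hypotheses on $\beta_n$ (with $j=2$ and $j=4$) are calibrated precisely to dominate these two contributions while still permitting $\eps_n = \beta_n^2/\log\log n$ to be admissible in Theorem~\ref{upper_a_thm}. A sharper Taylor analysis would likely permit weaker assumptions, but under the stated hypotheses the bookkeeping goes through essentially verbatim from the proof of Theorem~\ref{upper_a_thm}.
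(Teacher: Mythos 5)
Your proposal is correct and follows essentially the same route as the paper's proof: invoke Theorem~\ref{upper_a_thm} with an $\eps_n = o(\beta_n^2)$ calibrated to the hypotheses on $\beta_n$, reduce to first-moment bounds via (\ref{hopcount_lower_gen}), reuse the Taylor bookkeeping from Theorems~\ref{upper_a_thm} and \ref{lower_a_thm} to express the exponent through $\varphi(s_n\log n/k)$, and exploit the concavity of $\varphi$ together with a geometric series. The only cosmetic differences are that the paper sets $\eps_n$ to the explicit maximum of the two error scales rather than $\beta_n^2/\log\log n$, and cuts the tail at $\lceil 4ek_n^+\rceil$ rather than your $2ek_n^+$ (which would give the constant $\log(2/(1+\eps_n))$ in place of $\log 2$ — still positive, so the argument goes through unchanged).
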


\begin{proof}
Let $\eps_n=\max\{(\log n)^{-1/\alpha}(\log \log n)^4, (\log n)^{-1+2/\alpha}(\log \log n)^2\}$ and $b_n=(1+\eps_n)ek_n^+ u_n$. Notice that $\eps_n$ satisfies the requirements of Theorems \ref{upper_a_thm} and \ref{lower_a_thm} and $\eps_n=o(\beta_n^2)$. Without loss of generality, $(\beta_n)$ is a null sequence. To prove the lower bound for the hopcount, denote $h_n=\lfloor \frac{1}{1+\beta_n}s_n\log n\rfloor$. By \refthm{upper_a_thm} and (\ref{hopcount_lower_gen}), it is sufficient to show that $\sum_{k=1}^{h_n} \E[N_k(b_n)]\to 0$ for $n \to \infty$. This sum was already considered in the proof of \refthm{lower_a_thm} with $d_n$ in place of $b_n$ and $h_n$ replaced by $\kappa_n=\lceil 2ek_n^- \rceil$. Since only the order of $\kappa_n$ mattered for the summands with small $k$, the estimates for the summands are still valid when $k_n^-$ is replaced by $k_n^+$ and $1-\eps_n$ by $1+\eps_n$. By (\ref{exp_exp_eq}) we can consider
\[
\sum_{k=1}^{h_n}(\rho \alpha e)^k (2\log n)^{k\alpha}\exp(kp_n\big[\varphi(\tfrac{s_n \log n}{k})+1+\log(1+\eps_n)+\log \tfrac{k_n^+}{s_n\log n}+o(\eps_n)\big]),
\]
where $\varphi(x)=-x+\log x$. As in the proof of \refthm{hop_sn_thm}, $\varphi(\frac{s_n\log n}{k})\le\varphi(1+\beta_n)=-1-\frac{1}{2}\beta_n^2(1+o(1))$. Since by assumption $\eps_n=o(\beta_n^2)$ and $\beta_n^2 s_n \log n \to \infty$, we can bound the sum by
\begin{equation}\label{control_sum}
\begin{split}
\sum_{k=1}^{\infty} \exp\Big(k\Big[-\tfrac{1}{2}p_n\beta_n^2 (1+o(1)) + \log(\alpha \rho e)+& \alpha \log(2 \log n)\Big]\Big) \\
&=\sum_{k=1}^{\infty} \exp\big(-\tfrac{1}{2}p_n\beta_n^2 k(1+o(1))\big)=o(1).
\end{split}
\end{equation}
Here we used the definition of $s_n$ and the second assumption on $\beta_n$. The lower bound is thus proven.

For the upper bound, we set $h_n=\lceil \frac{1}{1-\beta_n}s_n \log n\rceil$. It suffices to show that $\sum_{k=h_n}^{n-1} \E[N_k(b_n)]\to 0$ for $n \to \infty$. Splitting this sum into the summands with $k \le \kappa_n:=\lceil 4ek_n^+ \rceil$ and $k > \kappa_n$, the summands with large $k$ can be handled in the same way as in the proof of \refthm{lower_a_thm}. The $2$ was replaced by $4$ to make up for the $1+\eps_n$ in $b_n$ instead of $1-\eps_n$ in $d_n$. For the summands $k \in \{h_n,\dotsc,\kappa_n\}$ we can use the same estimates as in the first part of the current proof and are left with estimating
\[
\sum_{k=h_n}^{\kappa_n} (\rho \alpha e)^k (2\log n)^{k\alpha} \exp(kp_n\big[\varphi(\tfrac{s_n \log n}{k})+1+\log(1+\eps_n)+\log \tfrac{k_n^+}{s_n\log n}+o(\eps_n)\big]).
\]
Since $\varphi(\frac{s_n \log n}{k})\le \varphi(1-\beta_n)=-1-\frac{1}{2}\beta_n^2 (1+o(1))$ the sum converges to zero as shown in (\ref{control_sum}).
\end{proof}

{\bf{Acknowledgements:}} A substantial part of this work has been done at Eurandom and Eindhoven University of Technology. ME and JG are grateful to both institutions for their hospitality.\\
The work of JG was supported in part by the European Research Council. The work of RvdH was supported in part by the Netherlands Organisation for Scientific Research (NWO).

\end{document}